\newtheorem{assumption}{Assumption}[section]
\newcommand{\circled}[2][]{\tikz[baseline=(char.base)]
    {\node[shape = circle, draw, inner sep = 1pt]
    (char) {\phantom{\ifblank{#1}{#2}{#1}}};%
    \node at (char.center) {\makebox[0pt][c]{#2}};}}
\crefname{hypothesis}{Hypothesis}{Hypotheses}
\title{Federated primal dual fixed point algorithm\thanks{Submitted to the editors DATE.}}
\author{Ya-Nan Zhu and Jingwei Liang and Xiaoqun Zhang}
\begin{document}

\maketitle

\begin{abstract}
Federated learning (FL) is a distributed learning paradigm that allows several clients to learn a global model without sharing their private data. In this paper, we generalize a primal dual fixed point (PDFP) \cite{PDFP} method to federated learning setting and propose an algorithm called Federated PDFP (FPDFP) for solving composite optimization problems. In addition, a quantization scheme is applied to reduce the communication overhead during the learning process.
An $O(\frac{1}{k})$ convergence rate (where $k$ is the communication round) of the proposed FPDFP is provided. Numerical experiments, including graph-guided logistic regression, 3D Computed Tomography (CT) reconstruction  are considered to evaluate the proposed algorithm. 
\end{abstract}

\begin{keywords} 
Federated learning, primal-dual fixed point method, quantization.
\end{keywords}

\section{Introduction}
With the availability of massive data and the development of computing capability, machine learning, especially deep learning, has demonstrated unprecedented performance in the past decade. In real-world applications, data is usually disseminated across various companies and portable devices. To utilize the data from different sources and learn a global model, an approach is to share the data with a server with enough storage and computing capability and then use suitable methods to get the resulting model parameters. However, sharing data usually is impossible in practice, and it is also restricted by legal and ethical issues. Thus, it is essential to design learning paradigms that allow different sites/clients to learn a global model without sharing their private data. Federated learning (FL) \cite{FedAvg,FL1} is a novel learning framework that fulfills this goal. The framework consists of a parameter server and a number of clients. The clients collaborate with the server to learn a joint model with their local data and computational resources. To be more precise, suppose we are optimizing the following problem
\begin{equation}\label{pb0}
\underset{x \in \mathbb{R}^d}{\min}~f(x),
\end{equation}
where $f$ is a smooth function in the following form
\begin{equation}
 f(x) = \frac{1}{N}\sum_{i = 1}^N f^{(i)}(x),
~~~\textrm{with}~~~
 f^{(i)}(x) = \frac{1}{n_i}\sum_{j = 1}^{n_i} f_j^{(i)}(x) ,
\end{equation}
where $N$ denotes the number of clients, and $n_i$ is the number of samples of the client $i$. The $f_j^{(i)}$ is the $j$'th loss function of client $i$. 
To solve the above problem in a distributed way, a representative algorithm, FedAvg \cite{FedAvg}, takes the following iterative strategy (see also Algorithm, {\ref{FedAvg}}. The superscript $(i)$ and subscript $k$ denote the index of clients and communication round, respectively.)
\begin{enumerate}[label={\arabic{*}).}]
  \item In round $k$, the server selects $0 < n \leq N$ clients uniformly at random and sends them the global parameter $x_k$.
  \item Each of the selected clients $i$ runs $\tau > 0$ steps mini-batch stochastic gradient descent (SGD)~\cite{SGD} (e.g. $\tau=10$ in Algorithm {\ref{FedAvg}}) to get the local model $x_{k + 1,\tau}^{(i)}$, and then uploads the $x_{k + 1,\tau}^{(i)}$ to the server.
  \item The server aggregates the local updates and gets the global model $x_{k + 1}$ for the next round. 
\end{enumerate}
\begin{algorithm}
\caption{\textbf{Federated Averaging \cite{FedAvg}}}
\label{FedAvg}
\begin{algorithmic}[1]
\REQUIRE Number of clients $N$, batch size $b$ for local SGD updates, number of total communication $K$, and the participation number $n$ in each round. Properly chosen step-size $\gamma_k > 0$. \\
\STATE    \textbf{For} $k = 0,1,2,\cdots,K - 1$ \\
\STATE    \qquad\quad Server selects $n$ clients $\mathcal{S}_k \subset \{1,2,\cdots,N\}$ uniformly at random. \\
\STATE    \qquad\quad Server sends global model parameter $x_k$ to the selected clients.  \\
\STATE    \qquad\quad \textbf{For} clients $i$ in $\mathcal{S}_k$ do  \\
\STATE    \qquad\quad  ~~~~ $x_{k,0}^{(i)} = x_k$\\
\STATE    \qquad\quad  ~~~~ \textbf{For} $t = 0,1,2,\cdots,\tau - 1$ \\
\STATE    \qquad\quad\qquad   ~~~~ Choose a subset $I_i \subset \{1,\cdots,n_i\} $ with size $b$ uniformly at random.
\STATE    \qquad\quad\qquad   ~~~~ $\tilde{\nabla} f^{(i)}(x_{k,t}^{(i)}) = \frac{1}{b} \sum_{j \in I_i} \nabla f_{j}^{(i)}(x_{k,t}^{(i)})$ \\
\STATE    \qquad\quad\qquad   ~~~~ $x_{k + 1,t + 1}^{(i)} = x_{k} - \gamma_k \tilde{\nabla} f^{(i)}(x_{k,t}^{(i)})$\\
\STATE    \qquad\quad  ~~~~ \textbf{End For} 
\STATE    \qquad\quad  ~~~~ Send $x_{k + 1,\tau}^{(i)}$ to the server
\STATE    \qquad\quad \textbf{End For} \\
\STATE    \qquad\quad $x_{k + 1} = \frac{1}{n}\sum_{i \in \mathcal{S}_k} x_{k + 1,\tau}^{(i)}$.\\
\STATE  \textbf{End For} \\
\ENSURE $x_{K}$.
\end{algorithmic}
\end{algorithm}
It should be remarked that only a subset of clients is activated in each round. One of the reasons causing this is that, in FL systems, clients are not always able to communicate with the server. For example, when a mobile phone is powered off or loses internet connection, it cannot upload data to the server.  
After FedAvg, several modifications from algorithmic \cite{FedPD,Fedpaq,FCM,Fedprox,Smith} and privacy \cite{PSG,HE,pDLG,Backdoor,DP,GANlk} perspectives have been proposed. 
For example, the FedProx \cite{Fedprox} adds a proximal term $\lVert x - x_k\rVert_2^2$ to the local objectives of FedAvg and solves it inexactly using SGD. The additional proximal term ensures that the local updated model $x_k$ is close to the global model of the previous round, which helps to address the heterogeneity across the clients. 

In each communication round, FedAvg or its variants need to communicate model parameters to the server. In practice, a large number of clients and limited communication bandwidth prevent enough clients from participating in the averaging step, which impedes the learning process from generating a satisfactory solution. One approach to address the communication constraints is applying compression or quantization techniques where each entry of the model parameters is rounded to a set of discrete values so that fewer bits are required to represent these values. There are a series of works combing quantization techniques with first-order stochastic optimization methods, such as \cite{QSGD,DCGD,NUQSGD,DIANA,PowerSGD,signSGD,IntSGD,ADIANA,MARINA}, to mention a few. In this paper, we focus on the low precision quantization \cite{QSGD} which is defined as the follows
\begin{definition}\label{def1}
For any $x \not = \mathbf{0} \in \mathbb{R}^d$, the low precision
quantizer $Q_s: \mathbb{R}^d \to \mathbb{R}^d$ is defined as
\begin{equation}
Q_s(x_i) = \lVert x \rVert \cdot \mathrm{sign}(x_i)\cdot \xi_i(x,s),~~~ i \in 1,2,\cdots,d, 
\end{equation}
where $\xi_i(x_i,s)$ is a random variable defined by
\begin{equation}
\xi_i(x_i,s) = \left \{ 
\begin{array}{ll}
& \frac{\ell}{s}, \qquad \mathrm{with ~ probability ~ } 1 - \frac{|x_i|}{\lVert x \rVert}s + \ell \\
& \frac{\ell + 1}{s}\qquad  otherwise.
\end{array}
\right.
\end{equation}
and $\ell$ is an integer such that $\frac{|v_i|}{\lVert v \rVert} \in [\frac{\ell}{s}, \frac{\ell + 1}{s}]$.
\end{definition}
Note that for a given vector $x$, instead of storing $d$ float number, the low precision representation $Q_s(x)$ only needs to store a float number $\lVert x \rVert$ and $d$ integers. The communication overhead can be further reduced when coding strategies, such as \textit{Elias} integer coding \cite{Elias}, are used. 
Also, it can be verified that  $Q_s(x)$ is an unbiased estimator of $x$, i.e.,
\begin{equation}\label{ep}
 \mathbb{E}[Q_s(x)] = x,
\end{equation}
where the expectation is with respect to the random variable $\xi_i(x_i,s)$.  The variance between $Q_s(x)$ and $x$ is bounded by norm squared of $x$ multiplied by some positive constant $\alpha$ related to dimension $d$ and quantization level $s$ \cite{QSGD}, i.e.,
\begin{equation}\label{vr}
 \mathbb{E}[\lVert Q_s(x) - x \rVert_2^2] \leq \alpha \lVert x \rVert_2^2.
\end{equation}
Properties {(\ref{ep})} and {(\ref{vr})} of quantization $Q_s(\cdot)$ are essential in the convergence analysis of many stochastic first-order optimization algorithms. Especially in federated learning, \cite{Fedpaq} combines the FedAvg with quantization $Q_s(\cdot)$ and develops an algorithm called Federated Periodic Averaging and Quantization (FedPAQ) which, in each round, clients send the quantized parameters to the server to reduce communication overhead. The detailed updating rule of FedPAQ can be found in Algorithm {\ref{Fedpaq}}. 

\begin{algorithm}
\caption{\textbf{Federated Periodic Averaging and Quantization}  \cite{Fedpaq}}
\label{Fedpaq}
\begin{algorithmic}[1]
\REQUIRE Input the number of clients $N$, batch size $b$ for local stochastic gradient, the number of communication $K$, and the participation number $n$ in each round and choose proper step size $\gamma_k > 0$ and quantization level $s$. \\
\STATE    \textbf{For} $k = 0,1,2,\cdots,K - 1$ \\
\STATE    \qquad\quad Server select $n$ clients $\mathcal{S}_k \subset \{1,2,\cdots,N\}$ uniformly at random.  \\
\STATE    \qquad\quad Server sends averaged model parameter $x_k$ to the selected clients.  \\
\STATE    \qquad\quad \textbf{For} clients $i$ in $\mathcal{S}_k$ do  \\
\STATE    \qquad\quad  ~~~~ $x_{k,0}^{(i)} = x_k$\\
\STATE    \qquad\quad  ~~~~ \textbf{For} $t = 0,1,2,\cdots,\tau - 1$ \\
\STATE    \qquad\quad\qquad   ~~~~ Choose a subset $I_i \subset \{1,\cdots,n_i\} $ with size $b$ uniformly at random.
\STATE    \qquad\quad\qquad   ~~~~ $\tilde{\nabla} f^{(i)}(x_{k,t}^{(i)}) = \frac{1}{b} \sum_{j \in I_i} \nabla f_{j}^{(i)}(x_{k,t}^{(i)})$ \\
\STATE    \qquad\quad\qquad   ~~~~ $x_{k + 1,t + 1}^{(i)} = x_{k} - \gamma_k \tilde{\nabla} f^{(i)}(x_{k,t}^{(i)})$\\
\STATE    \qquad\quad  ~~~~ \textbf{End For} 
\STATE    \qquad\quad   ~~~~ Send $Q_s(x_{k + 1,\tau}^{(i)} - x_k)$ to the server
\STATE    \qquad\quad \textbf{End For} \\
\STATE    \qquad\quad $x_{k + 1} = x_k + \frac{1}{n}\sum_{i \in \mathcal{S}_k} Q_s(x_{k + 1,t + 1}^{(i)} - x_k)$\\
\STATE  \textbf{End For} \\
\ENSURE $x_{K}$.
\end{algorithmic}
\end{algorithm}

In practice, there are also problems involving a non-smooth term in the objective, i.e.,
\begin{equation}\label{pb01}
\underset{x \in \mathbb{R}^d}{\min}~f(x) + g(x), 
\end{equation}
where $g$ is proper and lower semi-continuous (l.s.c.). The problem (\ref{pb01}) recovers a wide range of applications, for example, LASSO \cite{LASSO}, penalized M estimation \cite{PM}, penalized logistic regression \cite{plogistic}, total variation problems \cite{TV,TVPGD}. In general the proximity operator of $g$ at point $y$ defined by $\mathrm{Prox}_g(y) = \arg\min_{x \in \mathbb{R}^d}~g(x) + \frac{1}{2}\lVert x - y \rVert_2^2$ 
is assumed to be  easy to compute. To handle the non-smoothness, the authors in \cite{FCM} use mirror descent \cite{CMI} to solve {(\ref{pb01})} and propose a method called Federated Mirror Descent (FedMID). Moreover, to mitigate the so-called ``curse of primal averaging", they proposed Federated Dual Averaging (FedDualAvg) method, where the averaging step operates in the dual space. The acceleration of FedDualAvg with statistical recovery guarantee can be found in \cite{baofast}. 

The algorithms mentioned above optimize the objective either with a smooth loss $f(x)$ or a smooth loss plus a non-smooth regularization term whose proximity operator is easy to evaluate. However, there are a large portion of applications in imaging sciences and graph-guided classification problems in data sciences that the regularization term is composed with a matrix, and the problems take the following form
\begin{equation}\label{pb1}
\underset{x \in \mathbb{R}^d}{\min}~f(x) + g (B x),	
\end{equation}
where $f$ is a proper smooth function defined as in {(\ref{pb0})}. 
Here we assume $f$ is smooth convex and has $\frac{1}{\beta}$-Lipschitz continuous gradient for some $\beta > 0$, $g$ is proper convex l.s.c. and may not be differentiable. $B:\mathbb{R}^d \rightarrow \mathbb{R}^{m}$ is a linear transform. Due to the operator $B$, a closed-form expression of the proximity operator of $g \circ B$ in general does not exist, and a direct application of methods such as mirror descent or proximal gradient descent (PGD) \cite{PGD} needs a subroutine to compute it. To avoid sub-problem solving, we resort to the following min-max saddle-point reformulation of the problem {(\ref{pb1})},  
\begin{equation}\label{pb2}
\underset{x \in \mathbb{R}^d}{\min}\underset{v \in \mathbb{R}^m}{\max}~f(x) + \langle Bx,v \rangle - g^*(v), 
\end{equation}
where $g^*(\cdot)$, defined by $g^*(v) = \underset{y \in \mathbb{R}^m}{\sup}\langle v,y \rangle - g(y)$, is the conjugate function of $g$. 

Compared to problem {(\ref{pb1})}, the reformulation {(\ref{pb2})} involves conjugate $g^*$ whose proximity operator is as easy to compute as the proximity operator of $g$. This allows several primal dual splitting algorithms \cite{CP,PDFP,condat,vu,HeYuan,com}.
Here we concentrate on the primal dual fixed point method (PDFP) \cite{PDFP} (see also \cite{PAPC,LV}) whose updating rule is given in Algorithm {\ref{PDFP}}. 

\begin{algorithm}
\caption{\textbf{Primal Dual Fixed Point Method \cite{PDFP}}}
\label{PDFP}
\begin{algorithmic}[1]
\REQUIRE  Choose proper $\gamma > 0,\lambda > 0$
\STATE   \textbf{For} $k = 0,1,2,\cdots,K - 1$ \\
\STATE   \qquad $x_{k + \frac{1}{2}} = x_{k} - \gamma \nabla f(x_{k})$ \\
\STATE   \qquad $v_{k + 1} = \mathrm{Prox}_{\frac{\lambda}{\gamma}g^*}\big(\frac{\lambda}{\gamma}B x_{k + \frac{1}{2}}  + (I - \lambda BB^T)v_{k} \big)$ \\
\STATE   \qquad $x_{k + 1} = x_{k + \frac{1}{2}} - \gamma B^T v_{k + 1}$\\
\STATE   \textbf{End} \\
\ENSURE $x_{K},v_{K}$.
\end{algorithmic}
\end{algorithm}

It can be verified, if $\lambda = 1$ and $B = I$ with $I$ being the identity matrix, PDFP reduces to the projected gradient descent method (PGD, see the next section for a detailed derivation). Thus PDFP can be seen as a generalization of PGD when a general matrix $B$ is considered. 
As a result, the main motivation of this work is to generalize PDFP to federated learning setting and reduce the communication overhead by adopting quantization techniques. To that end, we list our contributions as follows:
\begin{itemize}
   \item To solve the problem where the objective has a complex regularization term, and the data are distributed across several clients, we propose a federated primal dual optimization algorithm (see Algorithm \ref{FPDFP}) which is called ``FPDFP'' for short. In addition, the quantization technique is adopted to ensure communication efficiency.
   \item We provide the convergence and convergence rate of the method under some standard assumptions.
   \item Numerical experiments including graph-guided logistics regression and 3D CT reconstruction are carried out to validate the performance of the proposed method. 
\end{itemize} 

\paragraph{Paper organization}
The paper is organized as followed: the algorithm FPDFP is elaborated in Section \ref{sec:algorithm}. Theoretical analysis is provided in Section \ref{sec:analysis}, while numerical experiments are provided in Section \ref{sec:experiment}. Some useful preliminary results are collected in the Appendix \ref{sec:appendix}. 


\section{Algorithm}\label{sec:algorithm}
In this section, we provide the details of our proposed algorithm FPDFP, see below in Algorithm \ref{FPDFP}.


\begin{algorithm}
\caption{\textbf{Federated Primal Dual Fixed Point Algorithm}}
\label{FPDFP}
\begin{algorithmic}[1]
\REQUIRE Choose proper $\gamma_k > 0,\lambda > 0$, number of clients $N$, initial point for each client $x_0^{(1)}= x_0^{(2)} = \cdots = x_0^{(N)} \in \mathbb{R}^d$ and 
$v_0^{(1)}= v_0^{(2)} = \cdots = v_0^{(N)} \in \mathbb{R}^{r}$, batch size $b$ for local stochastic gradient, participation number $n$ in each round. \\
\STATE    \textbf{For} $k = 0,1,2,\cdots,K - 1$ \\
\STATE    \qquad\quad Server selects $n$ clients $\mathcal{S}_k \subset \{1,2,\cdots,N\}$ uniformly at random.  \\
\STATE    \qquad\quad Server sends averaged model parameter $x_k,v_k$ to the selected clients.  \\
\STATE    \qquad\quad \textbf{For} clients $i$ in $\mathcal{S}_k$ do \\
\STATE    \qquad\quad  ~~~~ Choose a subset $I_i \subset \{1,\cdots,n_i\} $ with size $b$ uniformly at random. \\
\STATE    \qquad\quad  ~~~~ $\tilde{\nabla} f^{(i)}(x_{k}) = \frac{1}{b} \sum_{j \in I_i} \nabla f_{j}^{(i)}(x_{k})$ \\
\STATE    \qquad\quad  ~~~~ $x_{k + \frac{1}{2}}^{(i)} = x_{k} - \gamma_k \tilde{\nabla} f^{(i)}(x_{k})$
\STATE    \qquad\quad  ~~~~ $v_{k + 1}^{(i)} = \mathrm{Prox}_{\frac{\lambda}{\gamma_k}g^*}\big(\frac{\lambda}{\gamma_k}B x_{k + \frac{1}{2}}^{(i)}  + (I - \lambda BB^T)v_{k} \big)$ \\
\STATE    \qquad\quad  ~~~~ $x_{k + 1}^{(i)} = x_{k + \frac{1}{2}}^{(i)} - \gamma_k B^Tv_{k + 1}^{(i)}$\\
\STATE    \qquad\quad  ~~~~ Send $Q(x_{k + 1}^{(i)} - x_k)$ and $Q(v_{k + 1}^{(i)})$ to server\\
\STATE    \qquad\quad \textbf{End For} \\
\STATE    \qquad\quad $x_{k + 1} = x_k + \frac{1}{n}\sum_{i \in \mathcal{S}_k} Q(x_{k + 1}^{(i)} - x_k)$ and $v_{k + 1} = \frac{1}{n}\sum_{i \in \mathcal{S}_k} Q(v_{k + 1}^{(i)})$ \\
\STATE  \textbf{End For} \\
\ENSURE $x_{K},v_{K}$.
\end{algorithmic}
\end{algorithm}

In summary, the proposed algorithm consists of three main steps: 
\begin{itemize}
  \item Step 1: In each round, the server selects $n$ clients uniformly at random and sends global primal and dual variable $x_k, v_k$ to them.
  \item Step 2: The selected clients run one step PDFP and obtain $x_{k + 1}^{(i)},v_{k + 1}^{(i)}$, then send the quantized message $Q(x_{k + 1}^{(i)} - x_k)$ and $Q(v_{k + 1}^{(i)})$ to the server.
  \item Step 3: The server aggregates the received quantized message from the selected clients and computes the global model $x_{k + 1}, v_{k + 1}$ for the next round.
\end{itemize}

\begin{remark}\label{rmk1}
It can be observed that FPDFP needs to transmit two variables $x_k, v_k$ in each round, while existing federated learning algorithms such as FedSGD \cite{FedSGD}, FedAvg \cite{FedAvg}, FedPAQ \cite{Fedpaq},  only needs to handle $x_k$. This is mainly caused by the fact we are primal-dual algorithm, while these algorithm cannot handle the composite term $g(Bx)$. 
Furthermore, when $B = I$ is the identity matrix, if we choose $\lambda = 1$, one does not need to send the dual variable $Q (v_k)$. 
Since for this scenario, FPDFP for client $i$ at round $k$ becomes
\begin{equation}\label{com1}
\begin{aligned}
x_{k + \frac{1}{2}}^{(i)} & = x_{k} - \gamma_k \tilde{\nabla} f^{(i)}(x_{k}); \\
v_{k + 1}^{(i)} & = \mathrm{Prox}_{\frac{\lambda}{\gamma_k}g^*}\big(\tfrac{\lambda}{\gamma_k}x_{k + \frac{1}{2}}^{(i)} \big); \\
x_{k + 1}^{(i)} & = x_{k + \frac{1}{2}}^{(i)} - \gamma_k v_{k + 1}^{(i)}.
\end{aligned}   
\end{equation} 
which can be simplified to 
\begin{equation}\label{com2}
\begin{aligned}
v_{k + 1}^{(i)} & = \mathrm{Prox}_{\frac{\lambda}{\gamma_k}g^*}\big(\tfrac{1}{\gamma_k}\big(x_{k} - \gamma_k \tilde{\nabla} f^{(i)}(x_{k})\big)\big); \\
x_{k + 1}^{(i)} & = x_{k} - \gamma_k \tilde{\nabla} f^{(i)}(x_{k}) - \gamma_k v_{k + 1}^{(i)}.
\end{aligned}   
\end{equation}
Applying  Moreau's identity \cite{cvxbook} leads to the following local  update
\begin{equation}\label{com3}
 x_{k + 1}^{(i)} =  \mathrm{Prox}_{g}\big(x_{k} -{} \gamma_k \tilde{\nabla} f^{(i)}(x_{k})\big),
\end{equation}
which is simply the proximal stochastic gradient descent step \cite{PSGD}. 
\end{remark}

As a consequence, we have the following Algorithm {\ref{FPDFP2}} for dealing with problem with $B = I$.

\begin{algorithm}
\caption{\textbf{Federated Primal Dual Fixed Point Algorithm for $B = I$.}}
\label{FPDFP2}
\begin{algorithmic}[1]
\REQUIRE Choose proper $\gamma_k > 0$, number of clients $N$, initial point for each client $x_0^{(1)}= x_0^{(2)} = \cdots = x_0^{(N)} \in \mathbb{R}^d$, batch size $b$ for local stochastic gradient, participation number $n$ in each round. \\
\STATE    \textbf{For} $k = 0,1,2,\cdots,K - 1$ \\
\STATE    \qquad\quad Server selects $n$ clients $\mathcal{S}_k \subset \{1,2,\cdots,N\}$ uniformly at random.  \\
\STATE    \qquad\quad Server sends averaged model parameter $x_k$ to the selected clients.  \\
\STATE    \qquad\quad \textbf{For} clients $i$ in $\mathcal{S}_k$ do \\
\STATE    \qquad\quad  ~~~~ Choose a subset $I_i \subset \{1,\cdots,n_i\} $ with size $b$ uniformly at random. \\
\STATE    \qquad\quad  ~~~~ $\tilde{\nabla} f^{(i)}(x_{k}) = \frac{1}{b} \sum_{j \in I_i} \nabla f_{j}^{(i)}(x_{k})$ \\
\STATE    \qquad\quad  ~~~~ $x_{k + 1}^{(i)} =  \mathrm{Prox}_{g}\big(x_{k} -{} \gamma_k \tilde{\nabla} f^{(i)}(x_{k})\big)$ \\
\STATE    \qquad\quad  ~~~~ Send $Q(x_{k + 1}^{(i)} - x_k)$ to server.\\
\STATE    \qquad\quad \textbf{End For} \\
\STATE    \qquad\quad $x_{k + 1} = x_k + \frac{1}{n}\sum_{i \in \mathcal{S}_k} Q(x_{k + 1}^{(i)} - x_k)$ \\
\STATE  \textbf{End For} \\
\ENSURE $x_{K}$.
\end{algorithmic}
\end{algorithm}

\begin{remark}\label{rmk2}
The FPDFP is connected with several established works in the literature:
\begin{itemize}
   \item When function $g = 0$ in {(\ref{pb1})}, it can be verified that the dual variable does not participate in the update, and FPDFP becomes one step local update version of FedPAQ \cite{Fedpaq}. As it has been shown in \cite{Fedpaq} that FedPAQ recovers FedAvg when non quantization is used which indicates FPDFP also recovers FedAvg in that scenario. The convergence analysis of the current version of FPDFP only holds for one step local update, we leave the  multi-step extension of FPDFP to the future work. 
   
   \item When $B = I$, just as we discussed in {(\ref{com1}) - (\ref{com3})}, the local update of FPDFP becomes proximal stochastic gradient descent, and the most related work to the FPDFP in this case is the FedMid \cite{FCM}. However, FPDFP allows low precision message transmission so that the communication overhead can be reduced. 
 \end{itemize} 
\end{remark}

\section{Convergence Analysis}\label{sec:analysis}
In this section, we present the convergence analysis of the proposed FPDFP. 
Before proving the main theorem, we list several key assumptions and preliminary lemmas below. 
\begin{assumption}\label{smooth}

The function $f$ is smooth with $\frac{1}{\beta}$ Lipschitz continuous gradient.
\end{assumption}

\begin{assumption}\label{strongcvx}
The function $f(\cdot)$ is $\mu$-strongly convex i.e., for any $x,y \in \mathbb{R}^d$, $\langle \nabla f(x) - \nabla f(y),x - y \rangle \geq \mu \lVert x - y\rVert_2^2$.
\end{assumption}

\begin{assumption}\label{Quantizer}
\cite{Fedpaq} The low precision quantization $Q_s(\cdot)$ under quantization level $s$ is unbiased, and its variance grows with the squared of $\ell_2$-norm of its argument i.e.
\begin{equation}
 \mathbb{E}[Q_s(x) | x] = x, \qquad \mathbb{E}[\lVert Q_s(x) - x \rVert_2^2 | x] \leq q \lVert x \rVert_2^2. 
\end{equation}
\end{assumption}

\begin{assumption}\label{VR1}
The stochastic gradient for each client $i$ is unbiased i.e., $\mathbb{E}[\tilde{\nabla} f^{(i)}(x) ] = \nabla f^{(i)}(x)$  and its variance is uniformly bounded $\mathbb{E}[\lVert \tilde{\nabla} f^{(i)}(x) - \nabla f^{(i)}(x) ] \rVert_2^2 \leq \sigma^2$. 
\end{assumption}

\begin{assumption}\label{VR2}
The variance between the gradient $\nabla f^{(i)}(x)$ of each client $i$ and $\nabla f(x)$ is uniformly bounded i.e.,  $\mathbb{E}[\lVert \nabla f^{(i)}(x) - \nabla f(x) \rVert_2^2 ] \leq \delta^2$. 
\end{assumption}

\begin{assumption}\label{fact1}
The dual variable $v_{k}^{(i)}, i = 1,\cdots,N$ in FPDFP is uniformly bounded by $M$.
\end{assumption}

The Assumption {\ref{smooth}} is a standard assumption across the literature. The Assumption {\ref{strongcvx}} is necessary for us to show the convergence of the FPDFP since our convergence analysis is based on the estimate of the discrepancy between iterates and the optimal point (see Appendix \ref{sec:appendix}).

The Assumption {\ref{Quantizer}} and {\ref{VR2}} show the unbiasedness and variance of quantization operation $Q_s(\cdot)$ and stochastic gradient, which are standard and essential properties in the convergence analysis of stochastic algorithms. 
The last assumption may seem weird at first glance. However, in many sparsity promoting applications where $g$ takes (group) $\ell_1$-norm, the conjugate $g^*$ is the indicator function of a compact set, hence the dual variable $v_{k}^{(i)}$ is computed, and Assumption {\ref{fact1}} is satisfied. 


\begin{lemma}\label{lm1}
Suppose that $x^*$ is a solution of (\ref{pb1}), then there exists $v^*  \in \partial g(Bx^*)$ such that for any $\lambda > 0$ and a sequence $ \{ \gamma_k\}  > 0$ such that 
\begin{equation}\label{lm1eq1}
\left\{
\begin{aligned}
v^* & = \mathrm{Prox}_{\frac{\lambda}{\gamma_k} g^*}\big(\tfrac{\lambda}{\gamma_k} B(x^* - \gamma_k\nabla f(x^*))
+ (I - \lambda BB^T)v^* \big)     \\
& = T_k(x^*,v^*) ;\\
x^* & = x^* - \gamma_k \nabla f(x^*) - \gamma_k B^TT_k(x^*,v^*) . 
\end{aligned}
\right.
\end{equation}
Conversely, if $(x^*, v^*)$ satisfies {(\ref{lm1eq1})}, then $x^*$ is a solution to the problem {(\ref{pb1})}. 
\end{lemma}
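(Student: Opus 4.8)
The plan is to reduce both fixed-point equations in (\ref{lm1eq1}) to the first-order optimality condition for the convex problem (\ref{pb1}), namely $0 \in \nabla f(x^*) + B^T \partial g(Bx^*)$, and then to run the argument in both directions to obtain the claimed equivalence. The whole proof is driven by two standard facts that I would record first: by Fermat's rule, $x^*$ solves (\ref{pb1}) if and only if there exists $v^* \in \partial g(Bx^*)$ with $\nabla f(x^*) + B^T v^* = 0$; and the conjugate-subgradient identity $v^* \in \partial g(Bx^*) \Leftrightarrow Bx^* \in \partial g^*(v^*)$. I would also use the variational characterization of the proximity operator: for $\eta > 0$, $v = \mathrm{Prox}_{\eta g^*}(y)$ holds if and only if $y - v \in \eta\, \partial g^*(v)$. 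All three are standard and may be cited from the Appendix.

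For the forward direction I would verify the first equation of (\ref{lm1eq1}) by inserting $v^*$ into the prox characterization with $\eta = \lambda/\gamma_k$ and $y = \tfrac{\lambda}{\gamma_k} B(x^* - \gamma_k \nabla f(x^*)) + (I - \lambda BB^T)v^*$. The residual $y - v^*$ simplifies, after cancellation of the $v^*$ terms, to $\tfrac{\lambda}{\gamma_k} Bx^* - \lambda B(\nabla f(x^*) + B^T v^*)$. Using the optimality relation $\nabla f(x^*) + B^T v^* = 0$, this collapses to $\tfrac{\lambda}{\gamma_k} Bx^*$, so the prox inclusion reduces exactly to $Bx^* \in \partial g^*(v^*)$, which holds by the conjugate identity. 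This establishes $T_k(x^*,v^*) = v^*$ for every $\gamma_k > 0$. The second equation is then immediate: substituting $T_k(x^*,v^*) = v^*$ into $x^* - \gamma_k \nabla f(x^*) - \gamma_k B^T T_k(x^*,v^*)$ and again invoking $\nabla f(x^*) + B^T v^* = 0$ returns $x^*$.

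For the converse I would reverse each implication. The first equation, read through the prox characterization, yields $Bx^* \in \partial g^*(v^*)$, that is $v^* \in \partial g(Bx^*)$, and also gives $T_k(x^*,v^*) = v^*$; the second equation then forces $\gamma_k\big(\nabla f(x^*) + B^T T_k(x^*,v^*)\big) = 0$, hence $\nabla f(x^*) + B^T v^* = 0$. Together these are precisely Fermat's condition, so $x^*$ solves (\ref{pb1}).

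I do not anticipate a genuine obstacle; the content is careful bookkeeping rather than a deep argument. The one place to be cautious is the cancellation of the $(I - \lambda BB^T)v^*$ term against $v^*$ in the prox residual, since that is exactly what makes the step-size $\gamma_k$ and the matrix $B$ drop out cleanly and leaves the $\lambda$-independent optimality condition; I would write that simplification out explicitly to make the equivalence transparent.
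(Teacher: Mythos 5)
Your proof is correct and takes essentially the same route as the paper's: both reduce (\ref{lm1eq1}) to Fermat's condition $0\in\nabla f(x^*)+B^T\partial g(Bx^*)$ combined with the inverse relation $v^*\in\partial g(Bx^*)\Leftrightarrow Bx^*\in\partial g^*(v^*)$, the only cosmetic difference being that you verify the dual prox equation directly via the inclusion $y-v\in\eta\,\partial g^*(v)$ while the paper chains through $Bx^*=\mathrm{Prox}_{\frac{\gamma_k}{\lambda}g}\big(Bx^*+\tfrac{\gamma_k}{\lambda}v^*\big)$ and Moreau's identity. One small reordering is needed in your converse: the prox residual of the first equation is $\tfrac{\lambda}{\gamma_k}Bx^*-\lambda B\big(\nabla f(x^*)+B^Tv^*\big)$, so you must first invoke the second equation together with $T_k(x^*,v^*)=v^*$ to obtain $\nabla f(x^*)+B^Tv^*=0$ before the characterization collapses to $Bx^*\in\partial g^*(v^*)$; with that fixed everything checks out, and you in fact supply the converse that the paper leaves as ``similarly verified.''
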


Lemma {\ref{lm1}} states the optimality condition of problem {(\ref{pb1})}, whose proof can be found in the Appendix. Based on Lemma {\ref{lm1}}, we present a key lemma below.

\begin{lemma}\label{lm6}
Suppose Assumptions {\ref{smooth}}-{\ref{fact1}} hold, and a decreasing step size  $\gamma_k$ is used in Algorithm \ref{FPDFP}, choose $0 < \lambda \leq \frac{1}{\rho_{\max}(BB^T)}$ and $0 < D_1 < 2\mu$, then there exists a constant $K_0$  such that for $k \geq K_0$, the following estimate holds 
\begin{equation}\label{lm6eq0}
\begin{aligned}
 &\mathbb{E}_{k + 1}[\lVert x_{k + 1} - x^*\rVert_2^2 + \tfrac{\gamma_{k + 1}^2}{\lambda}\lVert v_{k + 1} - v^*\rVert_2^2] \\
& \leq (1 - D_1\gamma_k)\mathbb{E}_{k}[\lVert x_k - x^* \rVert_2^2] + \tfrac{\gamma_{k}^2}{\lambda}(1 -  \lambda\rho_{min}(BB^T)) \mathbb{E}_{k}[\lVert v_k - v^* \rVert_2^2] + \gamma_k^2D_2,\\
 \end{aligned}
\end{equation}
where $x_{k + 1},v_{k + 1}$ are the iterates of $k + 1$ round of Algorithm \ref{FPDFP}. The $\mathbb{E}_{k + 1}[\cdot]$ is expectation up to $k + 1$ round and
\begin{equation}
D_2 = \sigma^2 + \delta^2 
+ \Big(\frac{4(1 + q)(N - n)}{n(N - 1)} + 1 + q\Big)C_0,   
\end{equation}
where
$C_0 = 4\sigma^2 + 8\delta^2 + 2\rho_{\max}(BB^T)(2M^2+ \frac{M^2}{\lambda})$ and $C_0 = 0$ when full participation and no quantization is performed.

\end{lemma}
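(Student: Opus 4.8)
The plan is to isolate a deterministic primal--dual contraction and then strip off the three independent sources of randomness---stochastic gradients, client sampling, and quantization---by repeated bias--variance decompositions, collecting every remainder into the single $\gamma_k^2 D_2$ term. First I would introduce the deterministic one-step PDFP iterate $(\bar{x}_{k+1},\bar{v}_{k+1})$ produced from $(x_k,v_k)$ using the exact aggregate gradient $\nabla f(x_k)$, and establish the backbone estimate
\[
\|\bar{x}_{k+1}-x^*\|_2^2 + \tfrac{\gamma_k^2}{\lambda}\|\bar{v}_{k+1}-v^*\|_2^2 \le (1-D_1\gamma_k)\|x_k-x^*\|_2^2 + \tfrac{\gamma_k^2}{\lambda}(1-\lambda\rho_{\min}(BB^T))\|v_k-v^*\|_2^2 .
\]
This rests on Lemma~\ref{lm1} (so $(x^*,v^*)$ is a fixed point of the same map for every $\gamma_k$), the firm nonexpansiveness of $\mathrm{Prox}_{\frac{\lambda}{\gamma_k}g^*}$, the spectral bound $\|I-\lambda BB^T\|\le 1$ guaranteed by $0<\lambda\le 1/\rho_{\max}(BB^T)$ (which also yields the dual factor $1-\lambda\rho_{\min}(BB^T)$), and $\mu$-strong convexity with $\tfrac1\beta$-Lipschitz gradient to bound the gradient step by $(1-2\mu\gamma_k+\gamma_k^2/\beta^2)\|x_k-x^*\|_2^2$. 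Since the step sizes decrease, the residual $\gamma_k^2/\beta^2$ is absorbed into $(1-D_1\gamma_k)$ once $\gamma_k\le(2\mu-D_1)\beta^2$, which is precisely the role of the threshold $K_0$ and the hypothesis $0<D_1<2\mu$.

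Next I would account for noise by conditioning on $(x_k,v_k)$ and applying the tower property in the order quantization, sampling, gradient (their natural nesting in the algorithm). Taking $\mathbb{E}_Q$ first, unbiasedness and the variance bound of Assumption~\ref{Quantizer} reduce the aggregated iterates to $\tfrac1n\sum_{i\in\mathcal S_k}x_{k+1}^{(i)}$ and $\tfrac1n\sum_{i\in\mathcal S_k}v_{k+1}^{(i)}$ plus quantization-variance terms $\tfrac{q}{n^2}\sum_{i\in\mathcal S_k}(\|x_{k+1}^{(i)}-x_k\|_2^2+\|v_{k+1}^{(i)}\|_2^2)$. Taking $\mathbb{E}_{\mathcal S_k}$ next, the uniform-without-replacement sampling identity introduces the factor $\tfrac{N-n}{n(N-1)}$ multiplying the empirical variance $\tfrac1N\sum_i\|x_{k+1}^{(i)}-\bar X\|_2^2$, where $\bar X=\tfrac1N\sum_i x_{k+1}^{(i)}$, and turns partial sums into full $N$-client averages. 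The constant $C_0$ then arises from uniformly bounding the per-client deviations: $\|x_{k+1}^{(i)}-x_k\|_2^2=\gamma_k^2\|\tilde\nabla f^{(i)}(x_k)+B^Tv_{k+1}^{(i)}\|_2^2$ is controlled through $\sigma^2$, $\delta^2$, $\rho_{\max}(BB^T)$ and the uniform dual bound $\|v_{k+1}^{(i)}\|_2\le M$ of Assumption~\ref{fact1}; this is exactly why $C_0$ carries the $\sigma^2$, $\delta^2$, $M^2$ and $M^2/\lambda$ contributions and why the whole coefficient $\tfrac{4(1+q)(N-n)}{n(N-1)}+1+q$ degenerates under full participation ($n=N$) and no quantization ($q=0$).

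Finally I would take $\mathbb{E}_\xi$ over the stochastic gradients. The averaged stochastic iterate $\tfrac1N\sum_i x_{k+1}^{(i)}$ is compared with the deterministic $\bar x_{k+1}$ through the intermediate per-client exact-gradient iterate $\hat x_{k+1}^{(i)}$; nonexpansiveness of the prox and the explicit $\gamma_k$-scaling of the gradient inside the PDFP map transfer the noise bounds $\mathbb{E}\|\tilde\nabla f^{(i)}-\nabla f^{(i)}\|_2^2\le\sigma^2$ (Assumption~\ref{VR1}) and $\mathbb{E}\|\nabla f^{(i)}-\nabla f\|_2^2\le\delta^2$ (Assumption~\ref{VR2}) into $O(\gamma_k^2)$ iterate deviations, producing the $\sigma^2+\delta^2$ part of $D_2$. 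Combining the bias term (dominated by the first-step contraction) with all collected variance terms, and replacing $\gamma_{k+1}$ by $\gamma_k$ on the left using $\gamma_{k+1}\le\gamma_k$, yields \eqref{lm6eq0}.

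I expect the main obstacle to be this last step. Because the dual update is a nonlinear proximal map fed back into the primal update, expectation over the stochastic gradient does not commute with the iteration, so one cannot identify $\mathbb{E}_\xi[\tfrac1N\sum_i x_{k+1}^{(i)}]$ with $\bar x_{k+1}$; the nonlinearity must be controlled jointly in the primal and dual variables. Doing so while simultaneously preserving the clean contraction factors $(1-D_1\gamma_k)$ and $(1-\lambda\rho_{\min}(BB^T))$ and herding every cross term into the single $\gamma_k^2 D_2$ remainder with the exact combinatorial constants is where the bulk of the technical effort lies.
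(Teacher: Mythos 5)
Your decomposition of the three noise sources (quantization, client sampling, stochastic gradients) and your treatment of the first two match the paper: the paper's Lemma~\ref{lm4} is exactly the Pythagoras-type decoupling you describe, and its Lemma~\ref{lm5} produces the factors $\tfrac{4(1+q)(N-n)}{n(N-1)}$ and $(1+q)$ and the constant $C_0$ by the same sampling-without-replacement identity, the bound $\mathbb{E}\|Q(x)\|_2^2\le(1+q)\|x\|_2^2$, and the uniform dual bound $M$. The absorption of all $O(\gamma_k^2)\|x_k-x^*\|_2^2$ contributions into $(1-D_1\gamma_k)$ for $k\ge K_0$ is also as in the paper.

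The genuine gap is in your handling of the stochastic-gradient layer, and you in fact name it yourself as the ``main obstacle'' without resolving it. You propose to compare the averaged stochastic iterate $\tfrac1N\sum_i x_{k+1}^{(i)}$ with the deterministic exact-gradient PDFP iterate $\bar x_{k+1}$. Because the gradient noise passes through the nonlinear proximal map before reaching $x_{k+1}^{(i)}$, the deviation $e_k:=\tfrac1N\sum_i x_{k+1}^{(i)}-\bar x_{k+1}$ is \emph{not} mean-zero, only $\|e_k\|=O(\gamma_k(\sigma+\delta))$. Expanding $\|\bar x_{k+1}-x^*+e_k\|_2^2$ then forces a choice: either keep the cross term $2\langle \bar x_{k+1}-x^*,e_k\rangle$ and bound it by Young's inequality with parameter $\epsilon\gamma_k$, which leaves an additive remainder of order $\gamma_k(\sigma+\delta)^2$ rather than the required $\gamma_k^2 D_2$ (destroying the $O(1/k)$ rate), or take $\epsilon$ constant, which inflates the contraction factor to $(1+\epsilon)(1-D_1\gamma_k)>1$ for small $\gamma_k$. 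Neither yields \eqref{lm6eq0}. The paper's proof avoids this entirely: in Lemma~\ref{lm2} each per-client stochastic iterate is compared \emph{directly} to the fixed point $(x^*,v^*)$ of the deterministic map (Lemma~\ref{lm1}), and the firm nonexpansiveness of $\mathrm{Prox}_{\frac{\lambda}{\gamma_k}g^*}$ makes the proximal terms cancel inside the joint Lyapunov function $\|x_{k+1}^{(i)}-x^*\|_2^2+\tfrac{\gamma_{k+1}^2}{\lambda}\|v_{k+1}^{(i)}-v^*\|_2^2$, leaving only $\|x_k-x^*-\gamma_k(\tilde\nabla f^{(i)}(x_k)-\nabla f(x^*))\|_2^2+\tfrac{\gamma_k^2}{\lambda}\|v_k-v^*\|_M^2$. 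The surviving term is \emph{linear} in the gradient noise inside a squared norm, so unbiasedness annihilates the dangerous cross term and the remainder is genuinely $3\gamma_k^2(\sigma^2+\delta^2)$; convexity of $\|\cdot\|_2^2$ then passes from per-client estimates to the average. This direct-to-optimum comparison is the missing idea your route needs.
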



With Lemma {\ref{lm6}}, we are in position to present the convergence properties of FPDFP.

\begin{theorem}\label{thm1}
Suppose Assumptions {\ref{smooth}}-{\ref{fact1}} hold, and decreasing step size $\gamma_k$ is chosen such that $\sum_{k = 1}^{\infty} \gamma_k = +\infty$ and $\sum_{k = 1}^{\infty} \gamma_k^2 < +\infty$, and choose $0 < \lambda \leq \frac{1}{\rho_{\max}(BB^T)}$, we then have
\begin{equation}\label{thm1eq0}
\lim\inf_{k \to + \infty}~ \mathbb{E}_{k}[\lVert x_k - x^* \rVert_2^2] = 0,
\end{equation} 
where $x_k$ is the iterate of Algorithm \ref{FPDFP} and $x^*$ is an optimal point of the problem {(\ref{pb1})}.
\end{theorem}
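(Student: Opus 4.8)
The plan is to collapse the two quadratic terms in Lemma~\ref{lm6} into a single Lyapunov (energy) functional and then run a standard Robbins--Siegmund / telescoping argument. Concretely, I would define
\[
V_k := \mathbb{E}_{k}\big[\lVert x_k - x^*\rVert_2^2\big] + \tfrac{\gamma_k^2}{\lambda}\,\mathbb{E}_{k}\big[\lVert v_k - v^*\rVert_2^2\big],
\]
so that, by linearity of expectation, the left-hand side of \eqref{lm6eq0} is exactly $V_{k+1}$. The point to stress is that the $\gamma_{k+1}^2$ weight on the dual term is precisely what makes this identification work even though the step size is decreasing; choosing the dual weight to be $\gamma_k^2/\lambda$ is the device that lets the estimate telescope.

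The key structural observation concerns the dual coefficient on the right of \eqref{lm6eq0}. Since $BB^T$ is positive semidefinite we have $\rho_{\min}(BB^T)\ge 0$, and the hypothesis $0<\lambda\le 1/\rho_{\max}(BB^T)$ gives $0 \le 1-\lambda\rho_{\min}(BB^T)\le 1$. Hence the dual term on the right is at most $\tfrac{\gamma_k^2}{\lambda}\mathbb{E}_k[\lVert v_k-v^*\rVert_2^2]$, which is exactly the dual part of $V_k$ (the discarded surplus $-\gamma_k^2\rho_{\min}(BB^T)\mathbb{E}_k[\lVert v_k-v^*\rVert_2^2]$ has the favorable sign). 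Combining this with the primal coefficient $1-D_1\gamma_k$ yields, for every $k\ge K_0$, the one-step descent inequality
\[
V_{k+1} \le V_k - D_1\gamma_k\,\mathbb{E}_{k}\big[\lVert x_k-x^*\rVert_2^2\big] + \gamma_k^2 D_2.
\]

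From here I would telescope: summing from $k=K_0$ to $K$ and using $V_{K+1}\ge 0$ gives
\[
D_1\sum_{k=K_0}^{K}\gamma_k\,\mathbb{E}_{k}\big[\lVert x_k-x^*\rVert_2^2\big] \le V_{K_0} + D_2\sum_{k=K_0}^{K}\gamma_k^2.
\]
Because $\sum_k\gamma_k^2<+\infty$, the right-hand side remains bounded as $K\to\infty$, so $\sum_{k}\gamma_k\,\mathbb{E}_k[\lVert x_k-x^*\rVert_2^2]<+\infty$. Finally I would argue by contradiction: if $\liminf_{k\to+\infty} \mathbb{E}_k[\lVert x_k-x^*\rVert_2^2] = c > 0$, then $\mathbb{E}_k[\lVert x_k-x^*\rVert_2^2]\ge c/2$ for all large $k$, forcing $\sum_k\gamma_k\,\mathbb{E}_k[\lVert x_k-x^*\rVert_2^2]\ge \tfrac{c}{2}\sum_k\gamma_k=+\infty$ since $\sum_k\gamma_k=+\infty$ — contradicting the finiteness just established. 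Hence the $\liminf$ is zero, which is \eqref{thm1eq0}.

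Each step is individually routine; the part requiring the most care is the bookkeeping that fuses the two quadratics into one telescoping quantity, namely verifying that the dual weight $\gamma_k^2/\lambda$ is chosen consistently so that $V_{k+1}$ on the left genuinely dominates $V_k$ on the right. The restriction $k\ge K_0$ inherited from Lemma~\ref{lm6} merely shifts the starting index of the sums and is harmless, since the tail conditions $\sum\gamma_k=+\infty$ and $\sum\gamma_k^2<+\infty$ are insensitive to finitely many terms.
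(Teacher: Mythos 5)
Your proposal is correct and follows essentially the same route as the paper: both absorb the dual term using $1-\lambda\rho_{\min}(BB^T)\le 1$ to obtain the one-step descent inequality $V_{k+1}\le V_k - D_1\gamma_k\,\mathbb{E}_k[\lVert x_k-x^*\rVert_2^2]+\gamma_k^2D_2$, telescope, and conclude from $\sum_k\gamma_k^2<+\infty$ together with $\sum_k\gamma_k=+\infty$. Your write-up is in fact slightly more careful than the paper's (you make the final contradiction step and the role of $K_0$ explicit, whereas the paper's summed inequality contains a stray $\gamma_k$ factor outside the sum), but there is no substantive difference in the argument.
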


\begin{proof}
Recall {(\ref{lm6eq0})}, for all $k \geq K_0$
\begin{equation}\label{thm1eq1}
\begin{aligned}
& \mathbb{E}_{k + 1}[\lVert x_{k + 1} - x^* \rVert_2^2 + \tfrac{\gamma_{k + 1}^2}{\lambda}\lVert v_{k + 1}  - v^* \rVert_2^2] \\
& \leq (1 - D_1\gamma_k)\mathbb{E}_{k}[\lVert x_k - x^* \rVert_2^2] + \tfrac{\gamma_{k}^2}{\lambda}(1 - \rho_{min}(BB^T)) \mathbb{E}_{k}[\lVert v_k - v^* \rVert_2^2] + \gamma_k^2D_2 \\
& \leq \mathbb{E}_{k}[\lVert x_k - x^* \rVert_2^2 + \tfrac{\gamma_{k}^2}{\lambda}\mathbb{E}_{k}(\lVert v_k - v^* \rVert_2^2)] - D_1\gamma_k{\mathbb{E}_{k}[\lVert x_k - x^* \rVert_2^2]} + \gamma_k^2D_2.
\end{aligned}  
\end{equation}
Sum {(\ref{thm1eq1})} from $k = 1$ to $+\infty$, we obtain 
\begin{equation}
\begin{aligned}
D_1\gamma_k\sum_{k = k_0}^{+\infty}{\mathbb{E}_{k}[\lVert x_k - x^* \rVert_2^2} 
& \leq \lVert x_{k_0} - x^* \rVert_2^2 + \frac{\gamma_{k_0}^2}{\lambda} \lVert v_{k_0} - v^* \rVert_2^2 
+ D_2\sum_{k = k_0}^{+\infty}\gamma_k^2.
\end{aligned}
\end{equation}
Since $\sum_{k = k_0}^{+\infty}\gamma_k^2 < +\infty$ and $\sum_{k = k_0}^{+\infty}\gamma_k = +\infty$, we get that 
\begin{equation}
\lim\inf_{k \to + \infty}~\lVert x_k - x^* \rVert_2^2 = 0.
\end{equation}
This completes the proof.
\end{proof}

Before presenting the convergence rate result, we introduce a lemma from \cite[Lemma 5]{Fedpaq}.

\begin{lemma}[{\cite[Lemma 5]{Fedpaq}}]\label{rate}
Let $\Delta_{k}$ be a non-negative sequence satisfying $\Delta_{k + 1} \leq \big(1 - \tfrac{2}{k + c}\big)\Delta_k + \tfrac{a}{(k + c)^2}$, then 
for every $k \geq k_0$, where $a,c$ are positive reals and $k_0$ is a positive integer. Then for any $k \geq k_0$, there holds $\Delta_{k} \leq \tfrac{(k_0 + c)^2}{(k + c)^2}\Delta_{k_0} + \tfrac{a}{k + c}$. 
\end{lemma}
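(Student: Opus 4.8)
The plan is to establish the bound by induction on $k \geq k_0$: the hypothesis is a one-step linear recurrence with a $1/(k+c)$-type contraction factor and a $1/(k+c)^2$ forcing term, and the claimed conclusion is exactly the stationary profile such a recurrence settles into, so induction is the natural device.

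For the base case $k = k_0$ the asserted inequality reduces to $\Delta_{k_0} \leq \Delta_{k_0} + \tfrac{a}{k_0 + c}$, which holds since $a, c > 0$ and the sequence is non-negative. For the inductive step I would assume $\Delta_k \leq \tfrac{(k_0+c)^2}{(k+c)^2}\Delta_{k_0} + \tfrac{a}{k+c}$ and insert it into $\Delta_{k+1} \leq (1 - \tfrac{2}{k+c})\Delta_k + \tfrac{a}{(k+c)^2}$. Abbreviating $t = k+c$, expanding the product and grouping the $\Delta_{k_0}$-term against the constant term yields
\[
\Delta_{k+1} \leq \frac{(t-2)(k_0+c)^2}{t^3}\,\Delta_{k_0} + \frac{(t-1)a}{t^2}.
\]
Since $k+1+c = t+1$, it then suffices to verify the two coefficient inequalities $\tfrac{t-2}{t^3} \leq \tfrac{1}{(t+1)^2}$ and $\tfrac{t-1}{t^2} \leq \tfrac{1}{t+1}$, which I would reduce by cross-multiplication to $t^3 - 3t - 2 \leq t^3$ and $t^2 - 1 \leq t^2$ respectively --- both immediate for $t > 0$. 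These close the induction.

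The only delicate point, and the step I expect to be the main obstacle, is the sign of the contraction factor $1 - \tfrac{2}{k+c}$. Substituting an \emph{upper} bound for $\Delta_k$ into the recurrence is inequality-preserving only when this factor is non-negative, i.e.\ when $k+c \geq 2$; for a negative factor the direction of the inequality would flip (one can only conclude $(1-\tfrac{2}{k+c})\Delta_k \leq 0$, which is generally too weak). Since $c > 0$ is fixed and the statement is only claimed for $k \geq k_0$, I would impose (or fold into the hypothesis on $k_0$) that $k_0 + c \geq 2$, which forces $t \geq 2$ throughout and makes every substitution legitimate. With this harmless restriction the induction proceeds cleanly, and no further estimates are required.
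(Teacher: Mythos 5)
Your induction is correct and complete: the base case, the coefficient inequalities $(t-2)(t+1)^2\le t^3$ and $(t-1)(t+1)\le t^2$, and the observation that substituting the inductive upper bound is only legitimate when $1-\tfrac{2}{k+c}\ge 0$ are all handled properly, and the restriction $k_0+c\ge 2$ you impose is indeed needed (and harmless, since the lemma is only invoked in Theorem~\ref{thm2} for $k\ge K$ with $K$ large). The paper itself gives no proof of this lemma---it is imported verbatim from \cite[Lemma 5]{Fedpaq}---so there is nothing internal to compare against, but your argument is the standard one used for that cited result.
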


Now we are ready to present convergence rate of the proposed algorithm. 

\begin{theorem}\label{thm2}
Suppose Assumptions {\ref{smooth}}-{\ref{fact1}} hold, and the step size is chosen as $\gamma_k = \frac{2}{D_1k + 1}$ for some $0 < D_1 < 2\mu$, and choose $0 < \lambda \leq \frac{1}{\rho_{\max}(BB^T)}$. Furthermore the matrix $B$ has full row rank, then for $k \geq K$ with some $K > 0$ larger enough, the following estimate holds
\begin{equation}\label{thm2eq0}
\mathbb{E}_{k + 1}[\lVert x_{k + 1} - x^*\rVert_2^2 + \tfrac{\gamma_{k + 1}^2}{\lambda}\lVert v_{k + 1} - v^*\rVert_2^2] 
\leq  \tfrac{(D_1K + 1)^2}{(D_1 k + 1)^2}\mathbb{E}_{K}[\lVert x_{K} - x^* \rVert_2^2 + \tfrac{\gamma_{K}^2}{\lambda}\lVert v_{K} - v^* \rVert_2^2] + \tfrac{4D_2}{(D_1k + 1)D_1},
\end{equation}
where the constant $D_1$ and $D_2$ are defined as that in Lemma {\ref{lm6}}. 
\end{theorem}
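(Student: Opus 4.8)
The plan is to collapse the two--term recursion of Lemma~\ref{lm6} into a single scalar recursion in the Lyapunov quantity
\begin{equation*}
\Delta_k := \mathbb{E}_{k}[\lVert x_k - x^* \rVert_2^2 + \tfrac{\gamma_k^2}{\lambda}\lVert v_k - v^* \rVert_2^2],
\end{equation*}
and then invoke Lemma~\ref{rate}. First I would exploit the full row rank hypothesis on $B$: it guarantees $\rho_{\min}(BB^{T}) > 0$, so the dual contraction factor $1 - \lambda\rho_{\min}(BB^{T})$ appearing in \eqref{lm6eq0} is a \emph{constant} strictly less than one. Since $\gamma_k = \tfrac{2}{D_1 k + 1} \to 0$, there is an index $K$ such that $D_1\gamma_k \leq \lambda\rho_{\min}(BB^{T})$ for all $k \geq K$, equivalently $1 - \lambda\rho_{\min}(BB^{T}) \leq 1 - D_1\gamma_k$. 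This is precisely where the full row rank assumption and the phrase ``$k \geq K$ larger enough'' enter, and it is also the crux of the whole argument.

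With this threshold in hand, for every $k \geq K$ the two coefficients in \eqref{lm6eq0} are both dominated by the common factor $1 - D_1\gamma_k$, whence
\begin{equation*}
\Delta_{k+1} \leq (1 - D_1\gamma_k)\Delta_k + \gamma_k^2 D_2.
\end{equation*}
Substituting $\gamma_k = \tfrac{2}{D_1 k + 1}$ gives $D_1\gamma_k = \tfrac{2}{k + c}$ and $\gamma_k^2 D_2 = \tfrac{a}{(k+c)^2}$ with $c = \tfrac{1}{D_1}$ and $a = \tfrac{4D_2}{D_1^2}$, placing the recursion exactly in the hypothesis format of Lemma~\ref{rate}.

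Finally I would apply Lemma~\ref{rate} with $k_0 = K$ to obtain $\Delta_k \leq \tfrac{(K+c)^2}{(k+c)^2}\Delta_K + \tfrac{a}{k+c}$, and then rewrite the constants via $(K+c)^2/(k+c)^2 = (D_1 K + 1)^2/(D_1 k + 1)^2$ and $a/(k+c) = 4D_2/\big(D_1(D_1 k + 1)\big)$. A shift of index on the left, together with monotonicity of the right-hand bounds in $k$, then yields \eqref{thm2eq0}. The main obstacle is genuinely the first step: the dual block contracts at a fixed geometric rate while the primal block contracts only at the vanishing rate $D_1\gamma_k$, so the two can be merged into one recursion only after $\gamma_k$ has decayed below the threshold $\lambda\rho_{\min}(BB^{T})/D_1$; without full row rank (i.e.\ $\rho_{\min}(BB^{T})=0$) this merging fails and no such $K$ exists, which is exactly why that hypothesis is added here but absent from the asymptotic Theorem~\ref{thm1}.
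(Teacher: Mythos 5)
Your proposal is correct and follows essentially the same route as the paper: use full row rank to get $\rho_{\min}(BB^T)>0$, pick $K$ large enough (beyond the threshold $K_0$ of Lemma~\ref{lm6}) so that $1-\lambda\rho_{\min}(BB^T)\leq 1-D_1\gamma_k$, collapse \eqref{lm6eq0} into the scalar recursion $\Delta_{k+1}\leq(1-D_1\gamma_k)\Delta_k+\gamma_k^2D_2$, and apply Lemma~\ref{rate} with $c=1/D_1$ and $a=4D_2/D_1^2$. Your identification of the coefficient-merging step as the crux, and your handling of the index shift at the end, match (and slightly sharpen) what the paper does.
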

\begin{proof}
Again invoke {(\ref{lm6eq0})}, for $k \geq K_0$,
\begin{equation}\label{thm2eq1}
\begin{aligned}
& \mathbb{E}_{k + 1}[\lVert x_{k + 1} - x^* \rVert_2^2 + \tfrac{\gamma_{k + 1}^2}{\lambda}\lVert v_{k + 1}  - v^* \rVert_2^2] \\
& \leq (1 - D_1\gamma_k)\mathbb{E}_{k}[\lVert x_k - x^* \rVert_2^2] + \tfrac{\gamma_{k}^2}{\lambda}(1 - \lambda \rho_{min}(BB^T)) \mathbb{E}_{k}[\lVert v_k - v^* \rVert_2^2] + \gamma_k^2D_2. 
\end{aligned}  
\end{equation}
Since the matrix $B$ is full row rank, then $\rho_{min}(BB^T) > 0$. Observe that $\gamma_k$ is decreasing, there must be a $K_1$ larger enough such that $ 1 - \lambda\rho_{min}(BB^T) < 1 - D_1\gamma_k$.
Let $K = \max \{ K_0,K_1\}$, then for all $k \geq K$,
\begin{equation}\label{thm2eq2}
\begin{aligned}
& \mathbb{E}_{k + 1}[\lVert x_{k + 1} - x^* \rVert_2^2 + \tfrac{\gamma_{k + 1}^2}{\lambda}\lVert v_{k + 1}  - v^* \rVert_2^2] \\
& \leq (1 - D_1\gamma_k)\mathbb{E}_{k}[\lVert x_k - x^* \rVert_2^2] + \tfrac{\gamma_{k}^2}{\lambda}(1 - \lambda \rho_{min}(BB^T)) \mathbb{E}_{k}[\lVert v_k - v^* \rVert_2^2] + \gamma_k^2D_2 \\
& \leq (1 - D_1\gamma_k)\mathbb{E}_{k}[\lVert x_k - x^* \rVert_2^2 + \tfrac{\gamma_{k}^2}{\lambda}\lVert v_k - v^* \rVert_2^2] + \gamma_k^2D_2 \\
& = (1 - \tfrac{2}{k + 1/D_1})\mathbb{E}_{k}[\lVert x_k - x^* \rVert_2^2 + \tfrac{\gamma_{k}^2}{\lambda}\lVert v_k - v^* \rVert_2^2] + \tfrac{4D_2/D_1^2}{(k + 1/D_1)^2}  . 
\end{aligned}  
\end{equation}
The result follows by setting  $a = 4D_2/D_1^2,c = 1/D_1$ in Lemma {\ref{rate}}.
This completes the proof.
\end{proof}
The Theorem \ref{thm2} states that the FPDFP can get $\mathcal{O}(1/k)$ convergence rate which coincides with FedPAQ. However, FPDFP deals with more complex regularizer without the need to solve subproblems.

\section{Numerical Experiments}\label{sec:experiment}

In this section, we present numerical experiments to verify the performance of the proposed algorithm. 

\subsection{Graph-Guided Logistic Regression}

In this part, we consider the graph-guided \cite{PDFP} logistic regression, which the optimization is given as follows:
\begin{equation}\label{GGLST}
\underset{x \in \mathbb{R}^d}{\min}~\frac{1}{n} \sum_{i = 1}^{n} \log(1 + \exp(-b_is_i^Tx) + \frac{\mu_1}{2}\lVert x \rVert_2^2) + \mu_2\lVert Bx \rVert_1, 
\end{equation}
where the $b_i \in \{-1, 1 \}$ is the label of sample $s_i \in \mathbb{R}^d$. In FL setting, people usually consider the case when $B = I$($I$ is the identity matrix). The Graph-Guided model has shown better generalization ability \cite{SADMM}. We use sparse inverse covariance selection \cite{GLasso,Xray} to obtain graph matrix $G$ and $B = [G;I]$. Two real-world data sets \textit{a9a} and \textit{covtype} from LIBSVM \cite{LIBSVM} are considered, with details of the data sets and the regularization parameters $\mu_1$ and $\mu_2$ provided in Table {\ref{datasets}}. 
For each case, the algorithmic parameter $\lambda$ is set as $\frac{1}{\rho_{\max}{(BB^T)}}$, and the step size $\gamma_k$ is tuned such that the best performance is obtained. The experiments are carried out on a Laptop with i7 11850 processor, Nvidia graphics cards RTX 3070 (8G) with 5888 CUDA cores and 16GB RAM. The version of MATLAB is 2021a.

\begin{table}[htp]
\setlength{\belowcaptionskip}{0.2cm} 
\caption{Summary of different data sets.}
\centering
\begin{tabular}{|c|c|c|c|c|c|c|c|c|c|c|c|c|c|}
\hline
\multicolumn{2}{|c|}{Data sets}
& \multicolumn{2}{|c|}{$\sharp$ of samples}
& \multicolumn{2}{|c|}{$\sharp$ of train}
& \multicolumn{2}{|c|}{$\sharp$ of test}
& \multicolumn{2}{|c|}{$\sharp$ of features}
& \multicolumn{2}{|c|}{$\mu_1$}
& \multicolumn{2}{|c|}{$\mu_2$}\\
\hline
\multicolumn{2}{|c|}{a9a}
& \multicolumn{2}{|c|}{$32,561$}
& \multicolumn{2}{|c|}{$26,053$}
& \multicolumn{2}{|c|}{$6,508$}
& \multicolumn{2}{|c|}{$123$}
& \multicolumn{2}{|c|}{$10^{-5}$}
& \multicolumn{2}{|c|}{$10^{-5}$}\\
\hline
\multicolumn{2}{|c|}{covtype}
& \multicolumn{2}{|c|}{$581,012$}
& \multicolumn{2}{|c|}{$464,809$}
& \multicolumn{2}{|c|}{$116,203$}
& \multicolumn{2}{|c|}{$54$}
& \multicolumn{2}{|c|}{$10^{-5}$}
& \multicolumn{2}{|c|}{$10^{-5}$}\\
\hline
\end{tabular}
\label{datasets}
\end{table}

In Figures {\ref{GGLST1}} and {\ref{GGLST2}}, we provide the relative error of training loss(y-axis in log scale), testing loss(y-axis in log scale), and testing accuracy(log-log scale) of two data sets, under different quantization levels and participation number $n$. 
It can be observed from the figures that with fixed $n$, a larger quantization level leads to more accurate solutions (Figure {\ref{GGLST1}} and {\ref{GGLST2}} (a-c)). The reason is that a larger quantization level can have less variance and allow us to use a larger decreasing step size and thus obtain a better solution. 
\begin{itemize}
\item It can also be observed that a proper quantization level (e.g., $s = 20$ in Figure {\ref{GGLST1}} and {\ref{GGLST2}} (a-c)) can achieve a performance identical to that without quantization, which indicates that  quantization indeed can  reduce the communication overhead without hurting the quality of the solution. 

\item With quantization level $s = 20$ fixed, it can be observed from the figures that a small participation number can achieve faster convergence but gives a lower accurate solution (Figure {\ref{GGLST1}}, (d-f), $n = 10$; Figure {\ref{GGLST2}}, (d-f), $n = 5$). 

\item It is also unnecessary to require all clients to participate in the aggregation. Setting $n = 20$ (for both cases) is sufficient to get a solution similar to that when all clients have participated. 
\end{itemize}
Overall, the combination of quantization and partial participation can make the iterates converge to a relatively similar solution more efficiently. In addition, the plots of the number of bits versus the communication rounds for different scenarios are provided in Figure {\ref{GGLST3}}. As we can see, appropriate quantization level and participation number lead to lower communication costs with few bits required.


\begin{figure}[htbp!]
\centering
\subfigure[a9a (train)]{
\centering
\includegraphics[width = 1.8 in ]{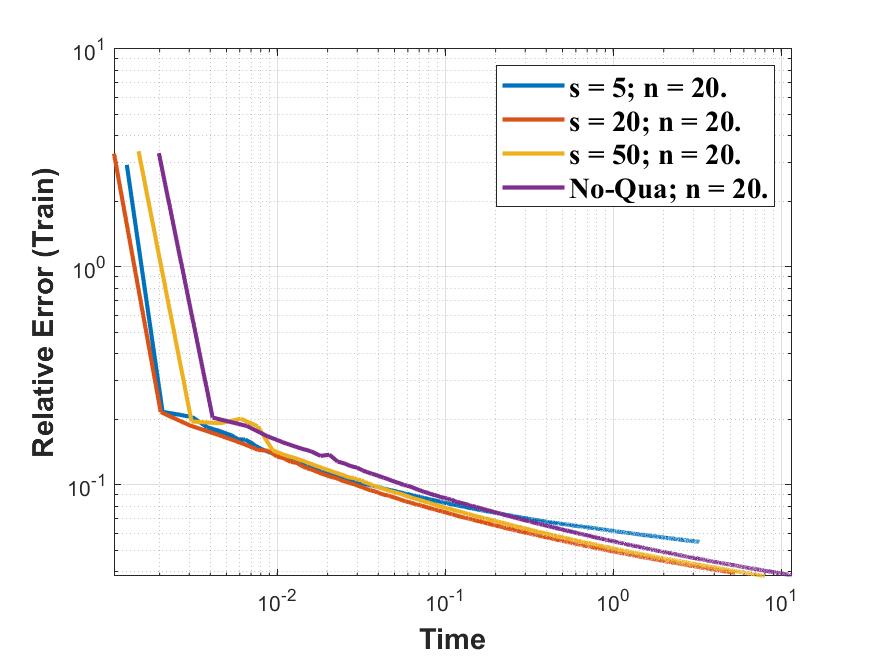}
}
\subfigure[a9a (test)]{
\centering
\includegraphics[width = 1.8 in]{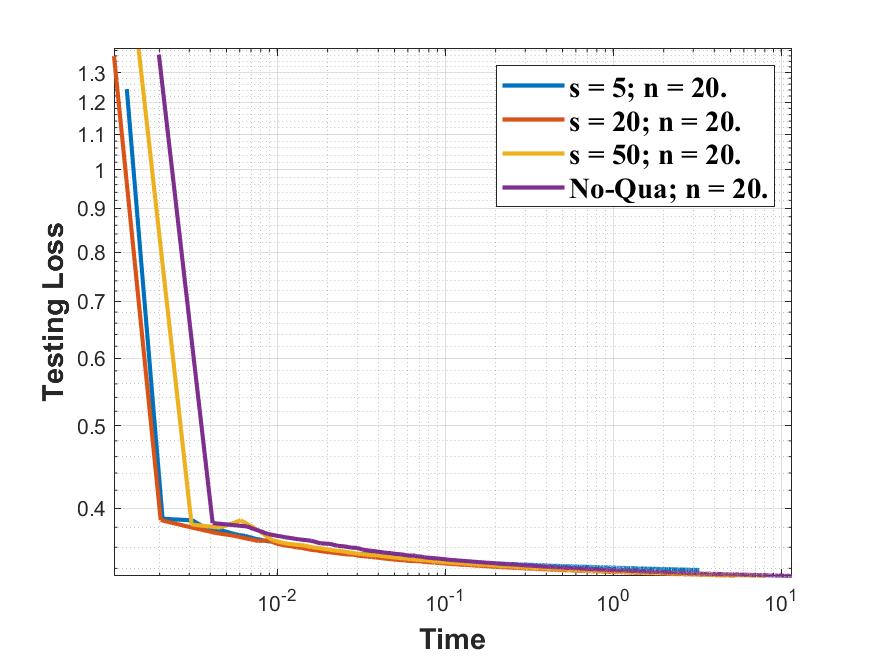}
}
\subfigure[a9a (accuracy) ]{
\centering
\includegraphics[width = 1.8 in]{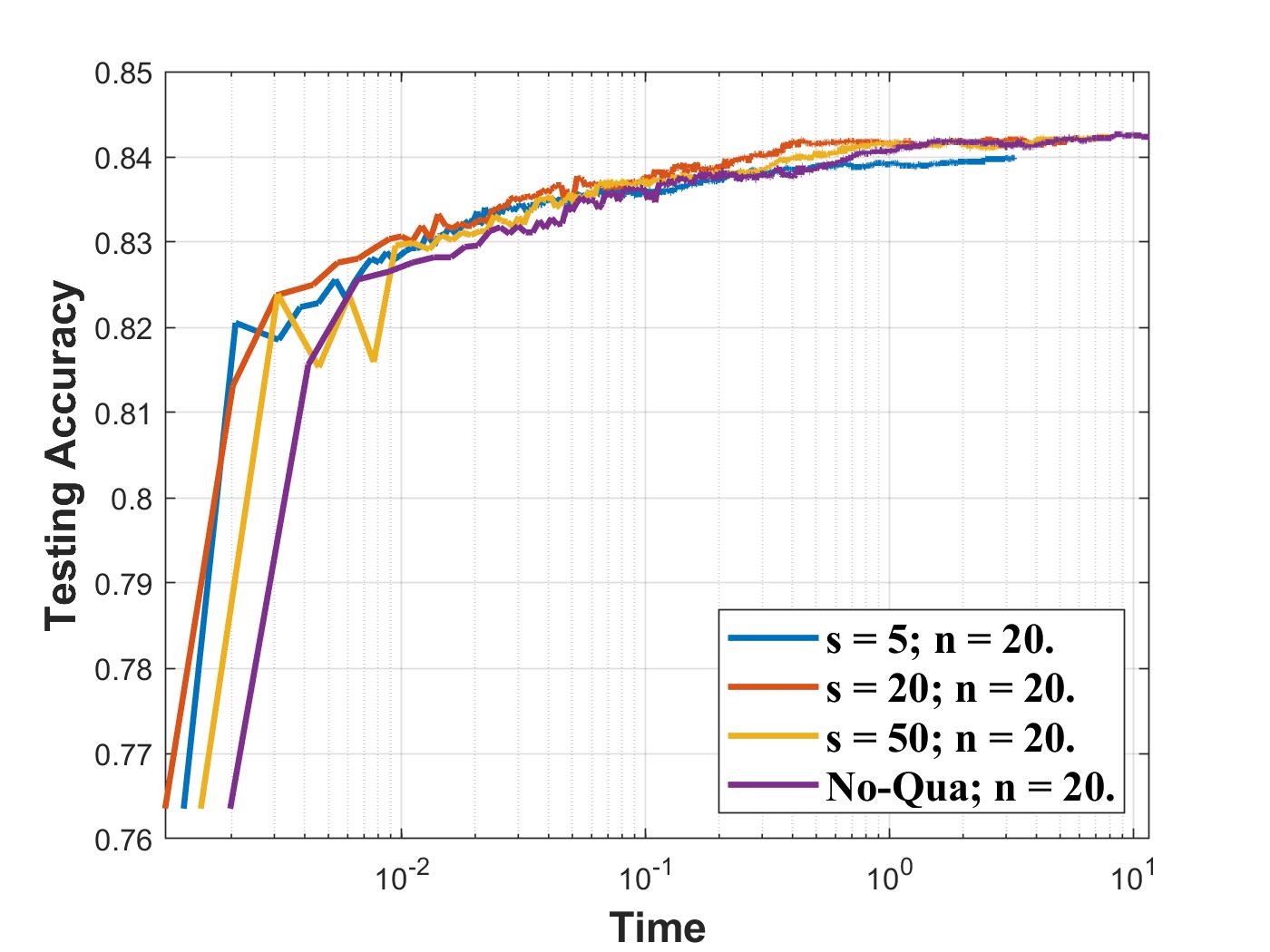}
}\hspace{1mm}

\centering
\subfigure[a9a (train)]{
\centering
\includegraphics[width = 1.8 in ]{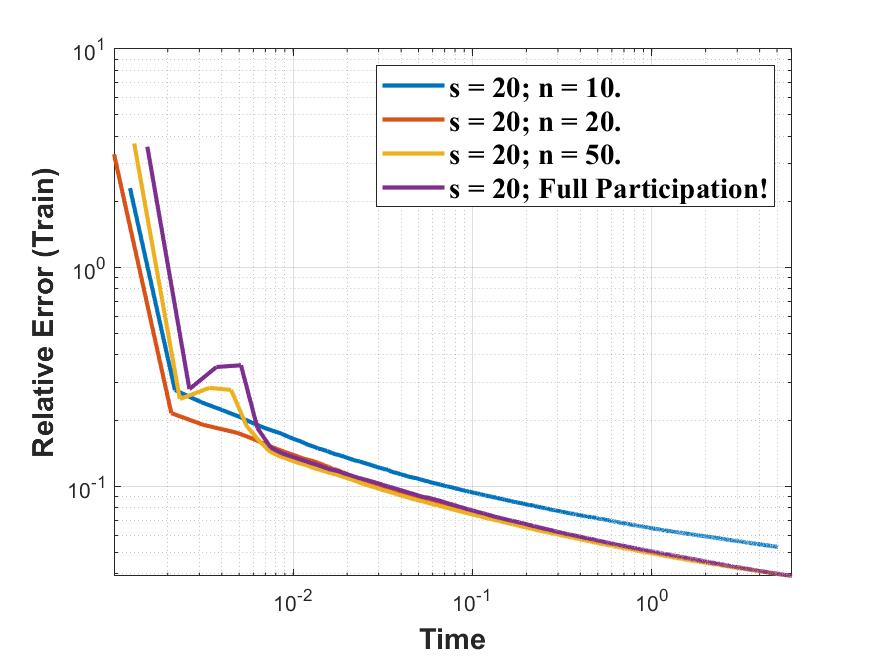}
}
\subfigure[a9a (test)]{
\centering
\includegraphics[width = 1.8 in]{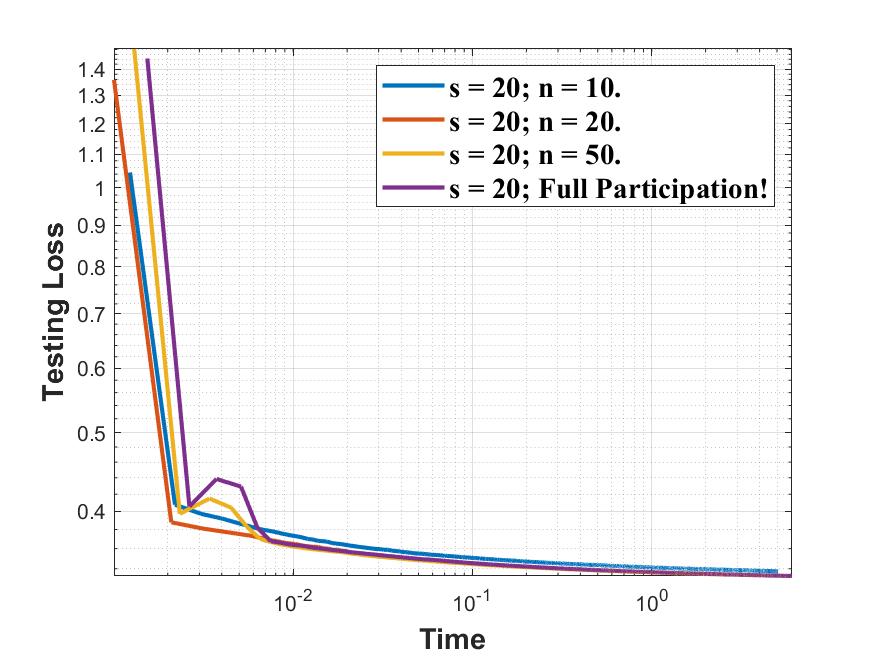}
}
\subfigure[a9a (accuracy)]{
\centering
\includegraphics[width = 1.8 in]{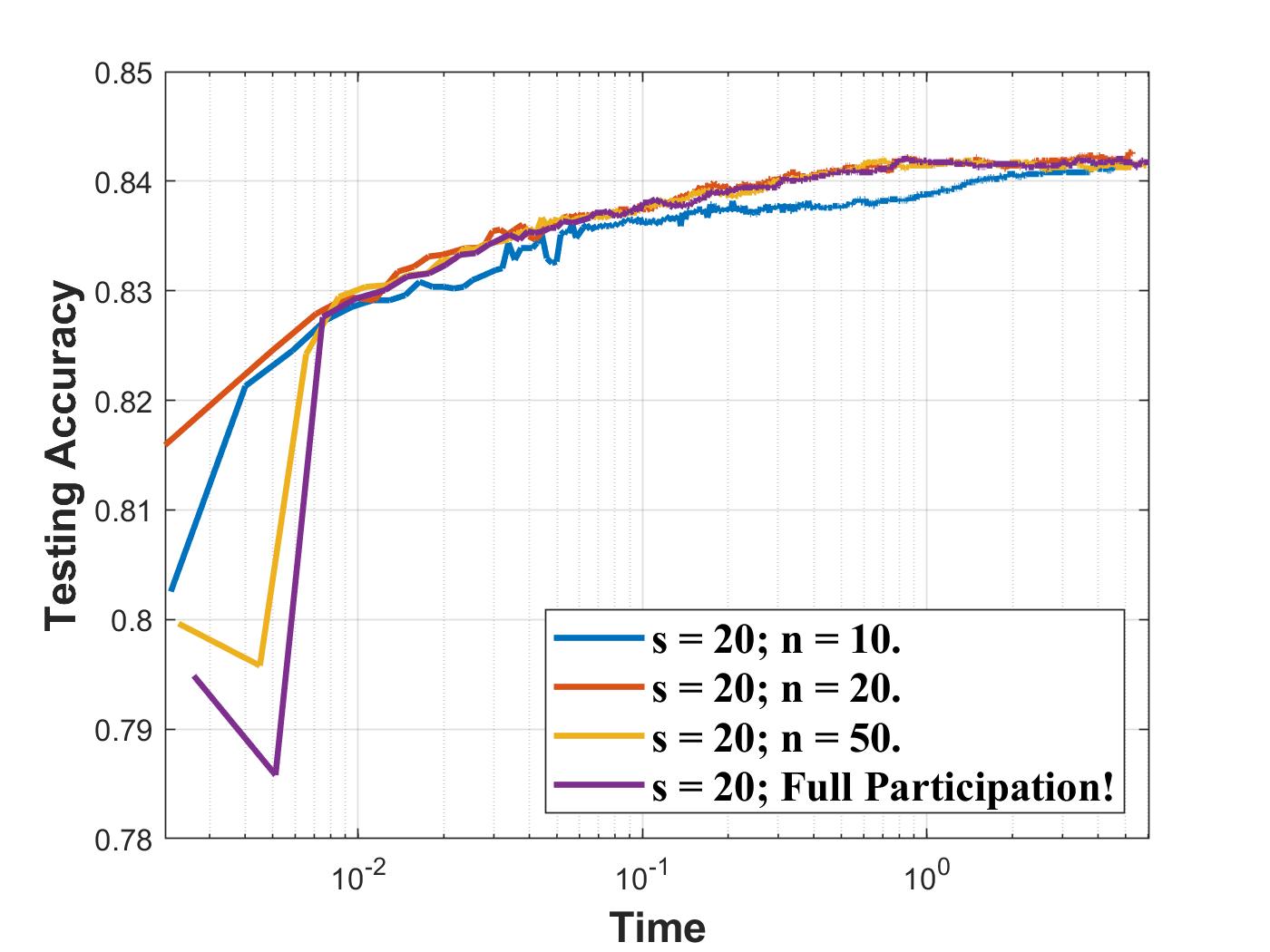}
}\hspace{1mm}
\caption{Training, testing loss and testing accuracy for data set \textit{a9a} for different quantization level $s$ and participation number $n$.}
\label{GGLST1}
\end{figure} 

\begin{figure}[htbp!]
\centering
\subfigure[covtype (train)]{
\centering
\includegraphics[width = 1.8 in ]{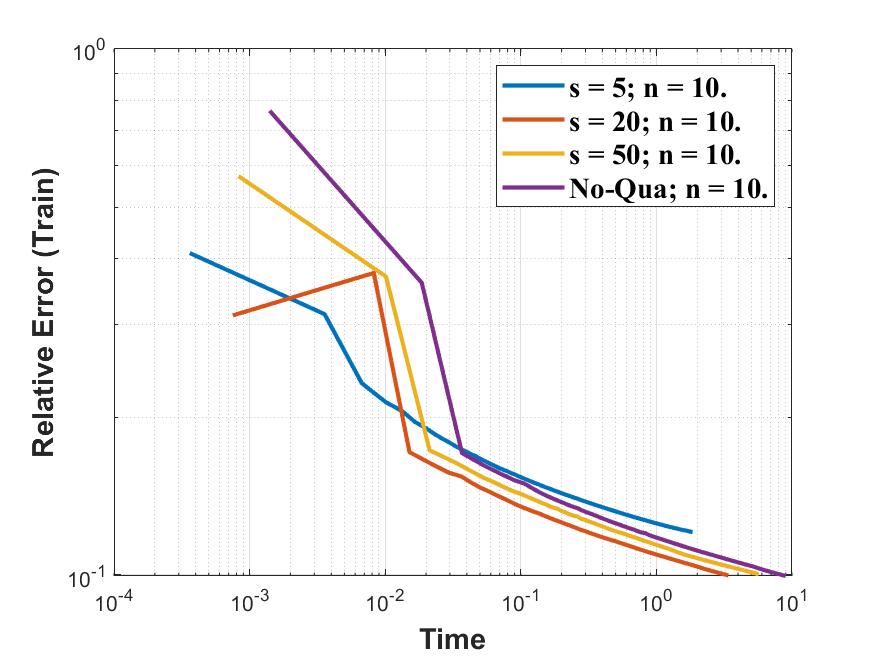}
}
\subfigure[covtype (test)]{
\centering
\includegraphics[width = 1.8 in]{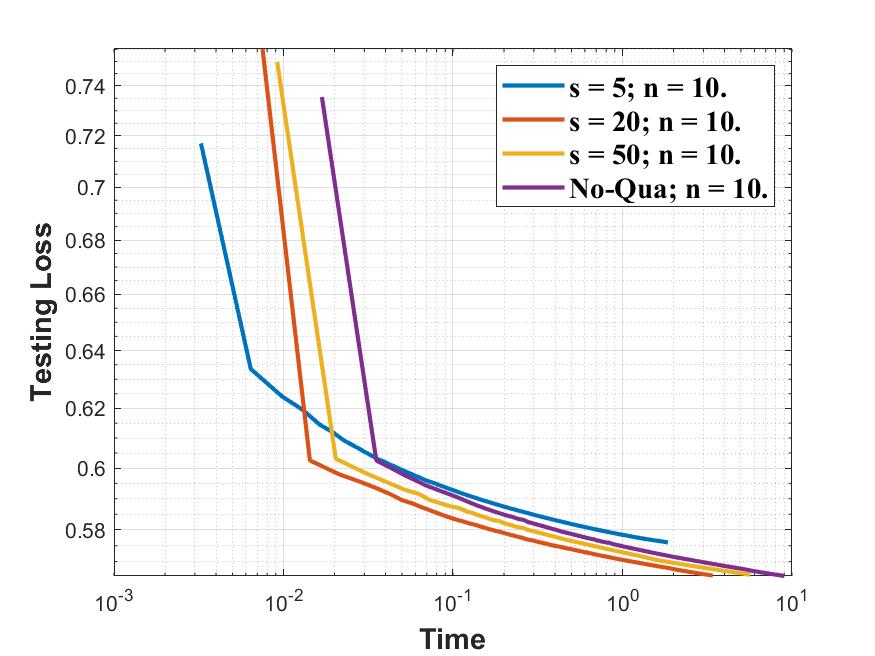}
}
\subfigure[covtype (accuracy)]{
\centering
\includegraphics[width = 1.8 in]{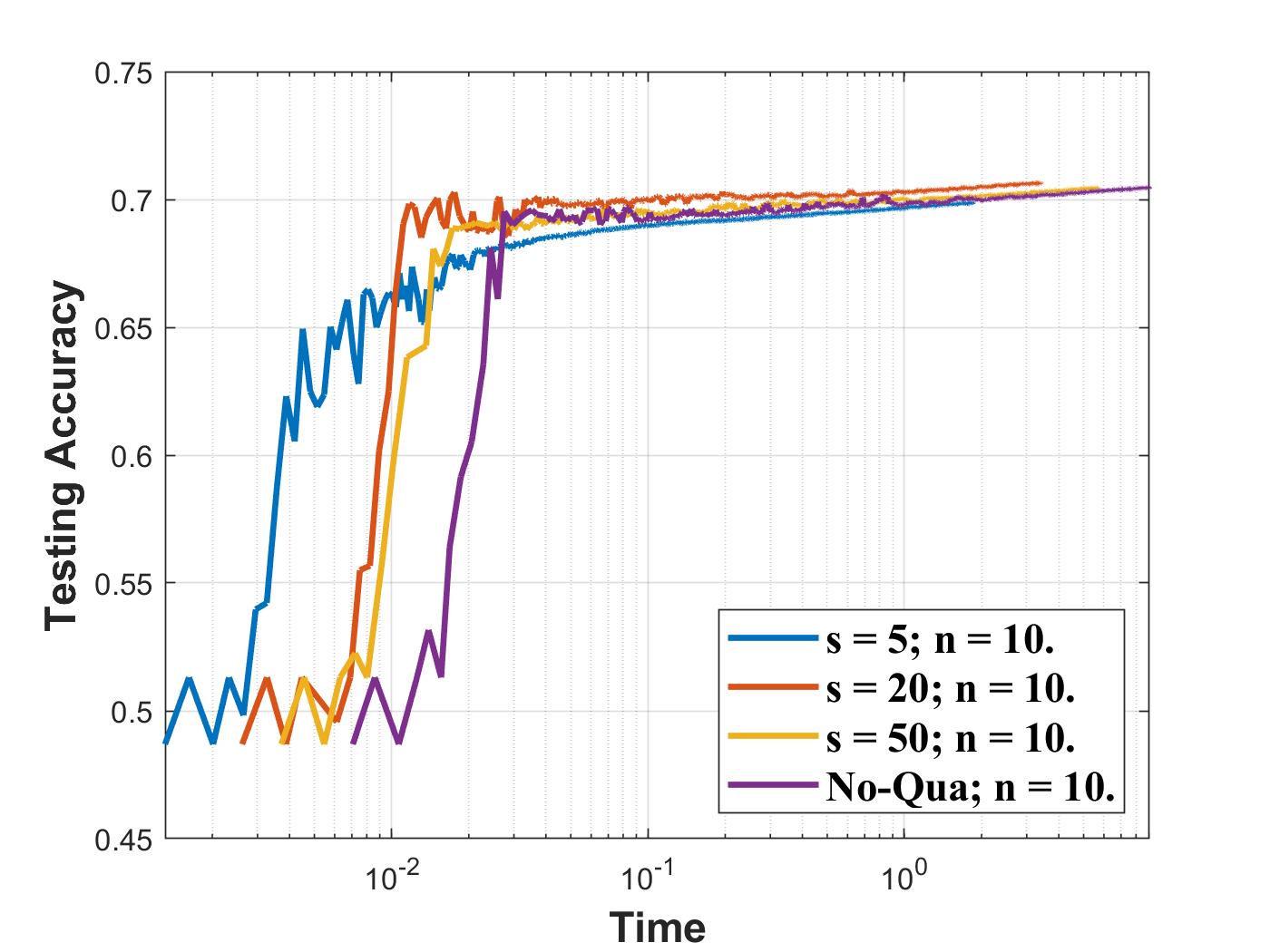}
}\hspace{1mm}

\centering
\subfigure[covtype (train)]{
\centering
\includegraphics[width = 1.8 in ]{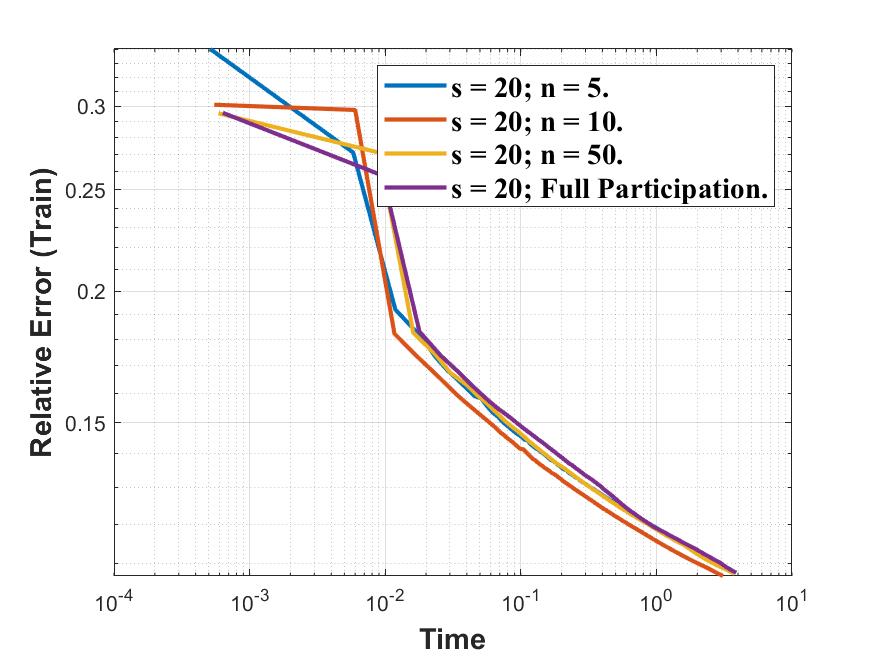}
}
\subfigure[covtype (test)]{
\centering
\includegraphics[width = 1.8 in]{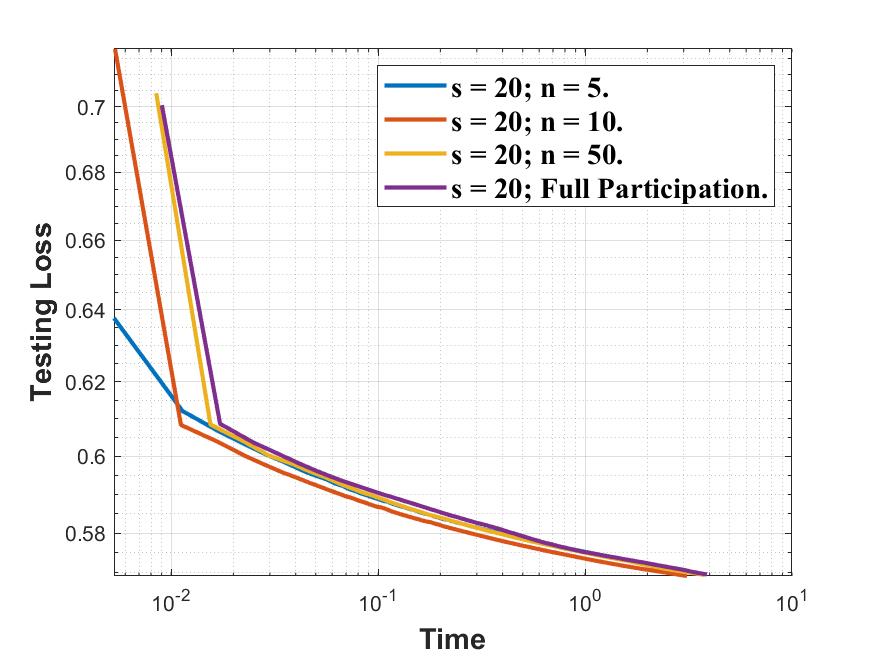}
}
\subfigure[covtype (accuracy) ]{
\centering
\includegraphics[width = 1.8 in]{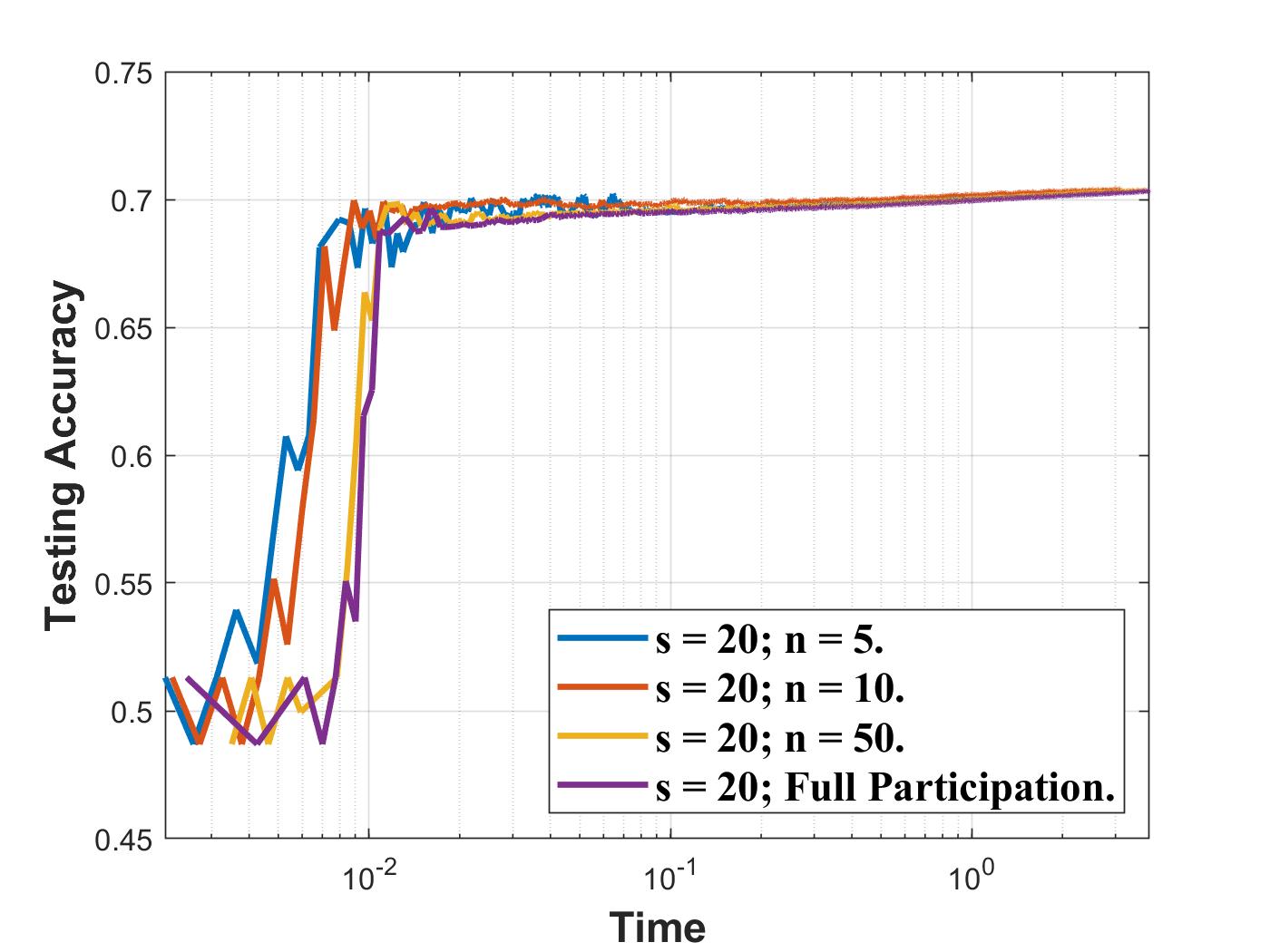}
}\hspace{1mm}
\caption{Training, testing loss and testing accuracy for data set \textit{covtype} under different quantization level $s$ and participation number $n$.}
\label{GGLST2}
\end{figure} 

\begin{figure}[htbp!]
\centering
\subfigure[communication bits (a9a)]{
\centering
\includegraphics[width = 2.5 in ]{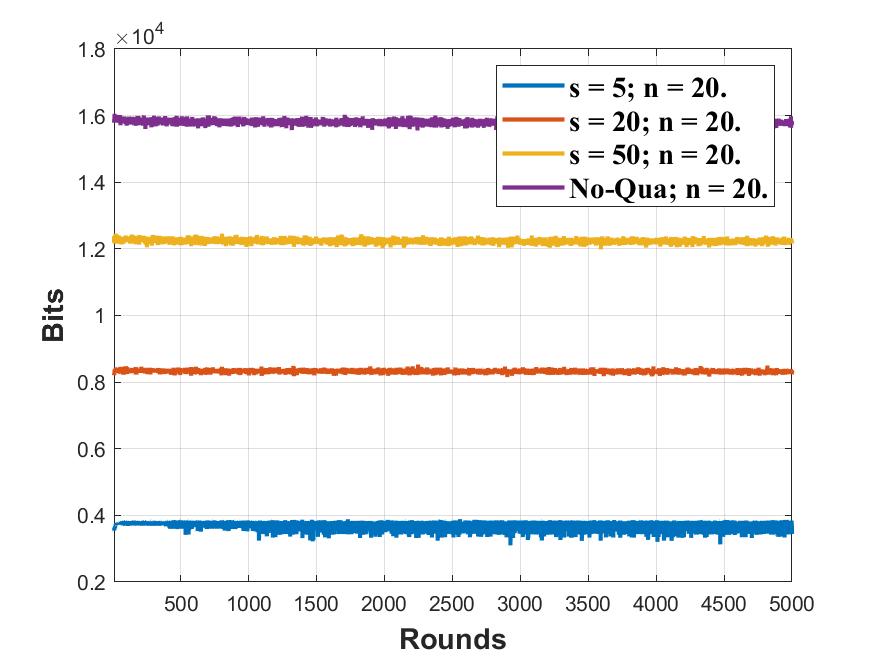}
}
\subfigure[communication bits (a9a)]{
\centering
\includegraphics[width = 2.5 in]{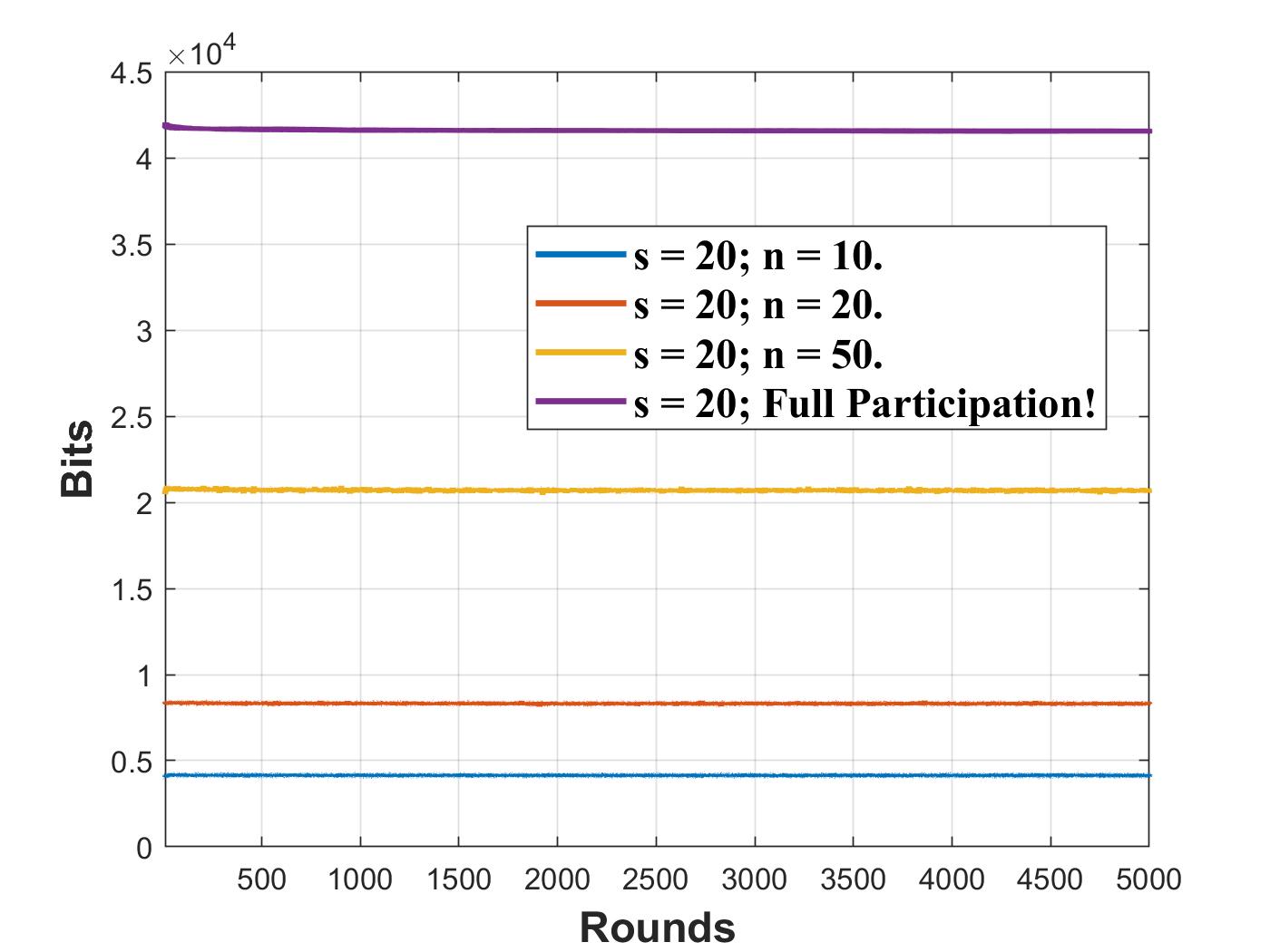}
}

\subfigure[communication bits (covtype)]{
\centering
\includegraphics[width = 2.5 in ]{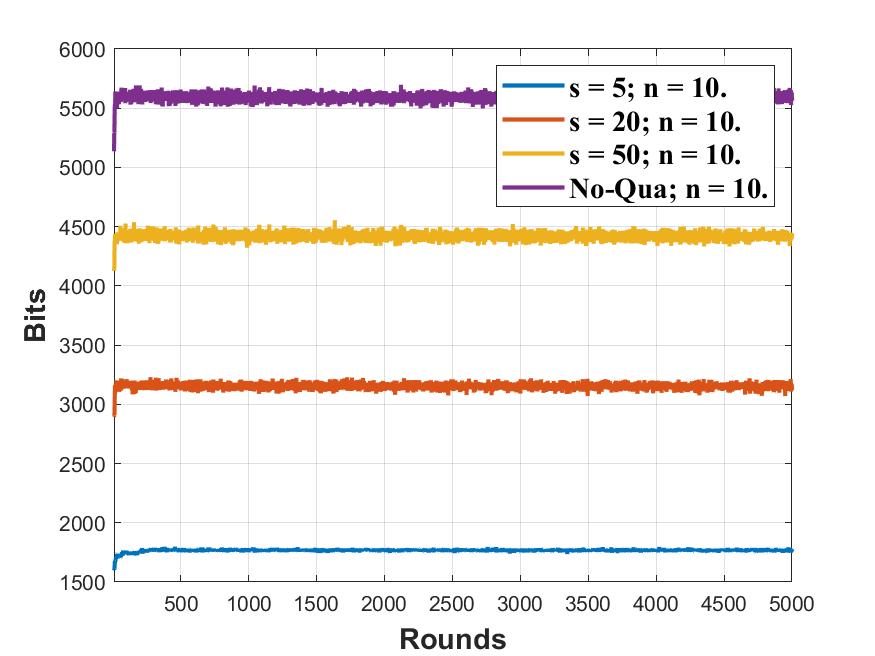}
}
\subfigure[communication bits (covtype)]{
\centering
\includegraphics[width = 2.5 in]{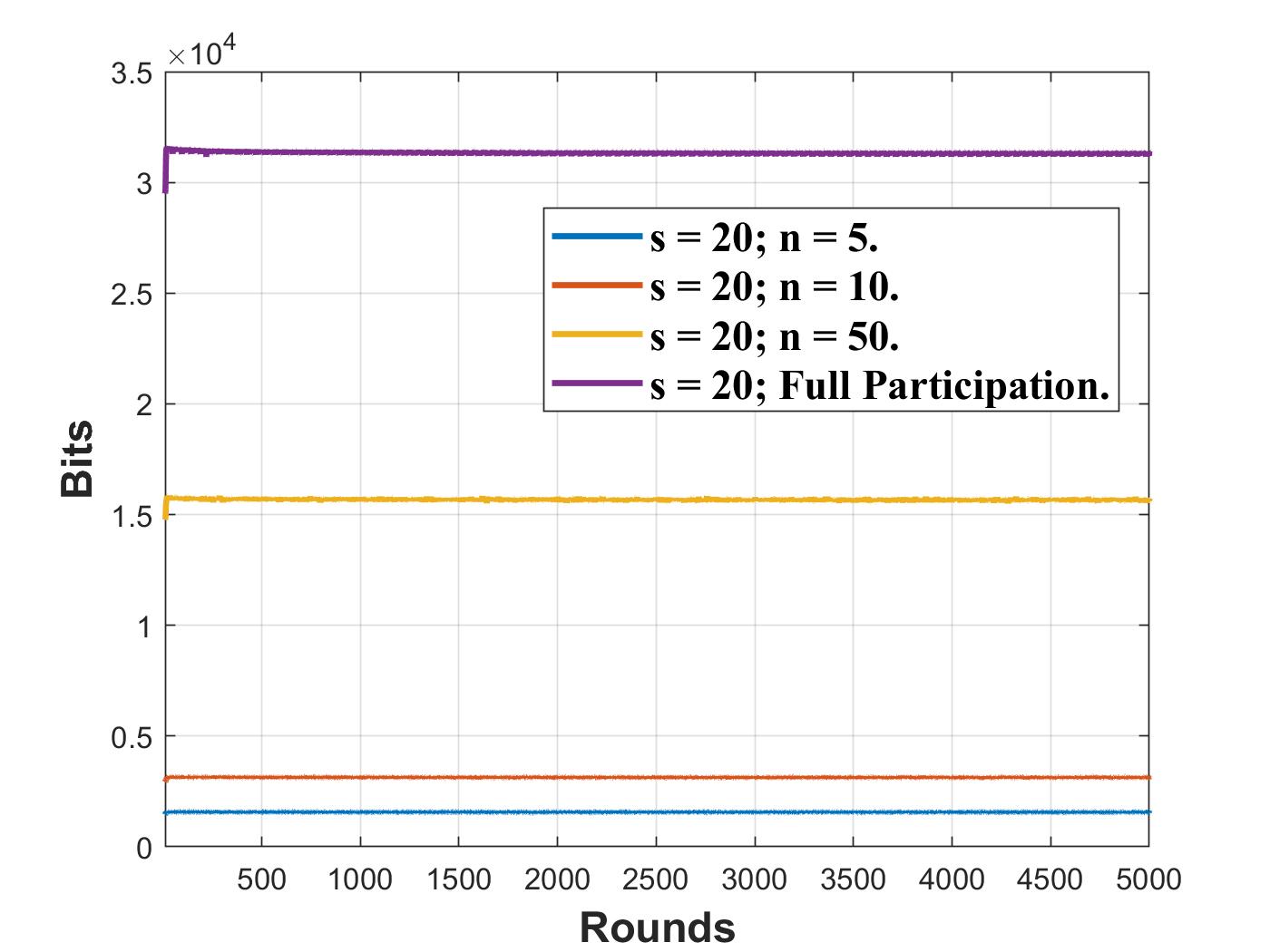}
}
\caption{The communication bits versus the communication round.}
\label{GGLST3}
\end{figure}

\subsection{3D Computed Tomography Reconstruction}
For medical image reconstruction tasks, the acquisition of high-quality images usually requires a huge amount of projection data. The limited computational capability of a single machine brings a challenge to reconstruction efficiency. One way to address the issue is to use distributed computation framework \cite{multirecon1,multirecon2,multirecon3}. In this section, we extend 3D CT reconstruction to federated setting to evaluate the proposed FPDFP. We consider the following TV-$\ell_2$ model 

\begin{equation}\label{TVl2}
 \min_{x \in \mathbb{R}^d}~\frac{1}{2}\lVert \mathcal{A}x - b \rVert_2^2 + \mu \lVert \nabla x \rVert_{1,2}, 
\end{equation}
where 
\begin{itemize}
  \item $x \in \mathbb{R}^d$ is the vectorized image to be reconstructed with dimension  $d = 256\times256\times64$.
  \item $\mathcal{A}$ is  discrete Radon transform. The size of projection plane is $n_a \times n_b = 512 \times 384$, and the number of viewer is $n_v = 668$. Since the scale of this example is relatively large we use the parallelization method proposed in \cite{Xray} to compute Radon transform and its adjoint. Here we distribute the angles of projections among $N=17$ clients in a i.i.d fashion.
  \item $b$ is the noisy projection vector which is obtained by adding a Gaussian noise $\varepsilon$ with zero mean and variance $0.02$ to the projection data of ground truth image $x_0$, i.e., $b = \mathcal{A}x_0 + \varepsilon$.
  \item $\mu = 10^{-3}$ is the regularizer parameter.
  \item $\nabla$ is the discrete gradient operator, and $\lVert \cdot \rVert_{1,2}$ is the $\ell_{1,2}$ norm, i.e. we use the `isotropic' total variation \cite{TV} for the regularization term.
 
\end{itemize}
The parameter $\lambda$ is set as $\frac{1}{20}$ and the step size $\gamma_k$ are tuned such that the best performance is achieved.  Different quantization level $s = 500,1000,3000,5000$, and participation number $n = 2,5,10$
and full participation $n = 17$ are considered. We note that the quantization level $s$ is much larger than the previous example, which is due to the fact that the dimension of decision variable $x$ is much larger. We need a finer chopped interval so that the variance is within a proper threshold and the algorithm can converge. Inspired by the block quantization proposed in \cite{QSGD}, we quantize each slice of the image with the same quantization level.

The objective function value and Peak Signal to Noise Ratio (PSNR) value versus the number of iteration are depicted in Figure {\ref{CT3D1}} and {\ref{CT3D2}}. With participation number $n = 5$ fixed, it can be seen from Figure {\ref{CT3D1}} that the advantage of quantization is obvious. A smaller quantization level leads to faster convergence but gives a higher objective value (Figure {\ref{CT3D1}} (a) for $s = 500$). $s = 1000$ is sufficient to get a proper solution (see Figure {\ref{CT3D1}} and one slice reconstruction results in {\ref{CT3D3}}). 
With the quantization level $s = 1000$ fixed, it can be observed from Figure {\ref{CT3D2}} and {\ref{CT3D4}} that setting partial participation $n = 5,10$ is enough to reconstruct images with enough quality. The advantage of quantization can be further demonstrated by the plots of bits transferred during communication for the scenarios above.

\begin{figure}[htbp!]
\subfigure[objective value]{
\centering
\includegraphics[width = 2.5 in]{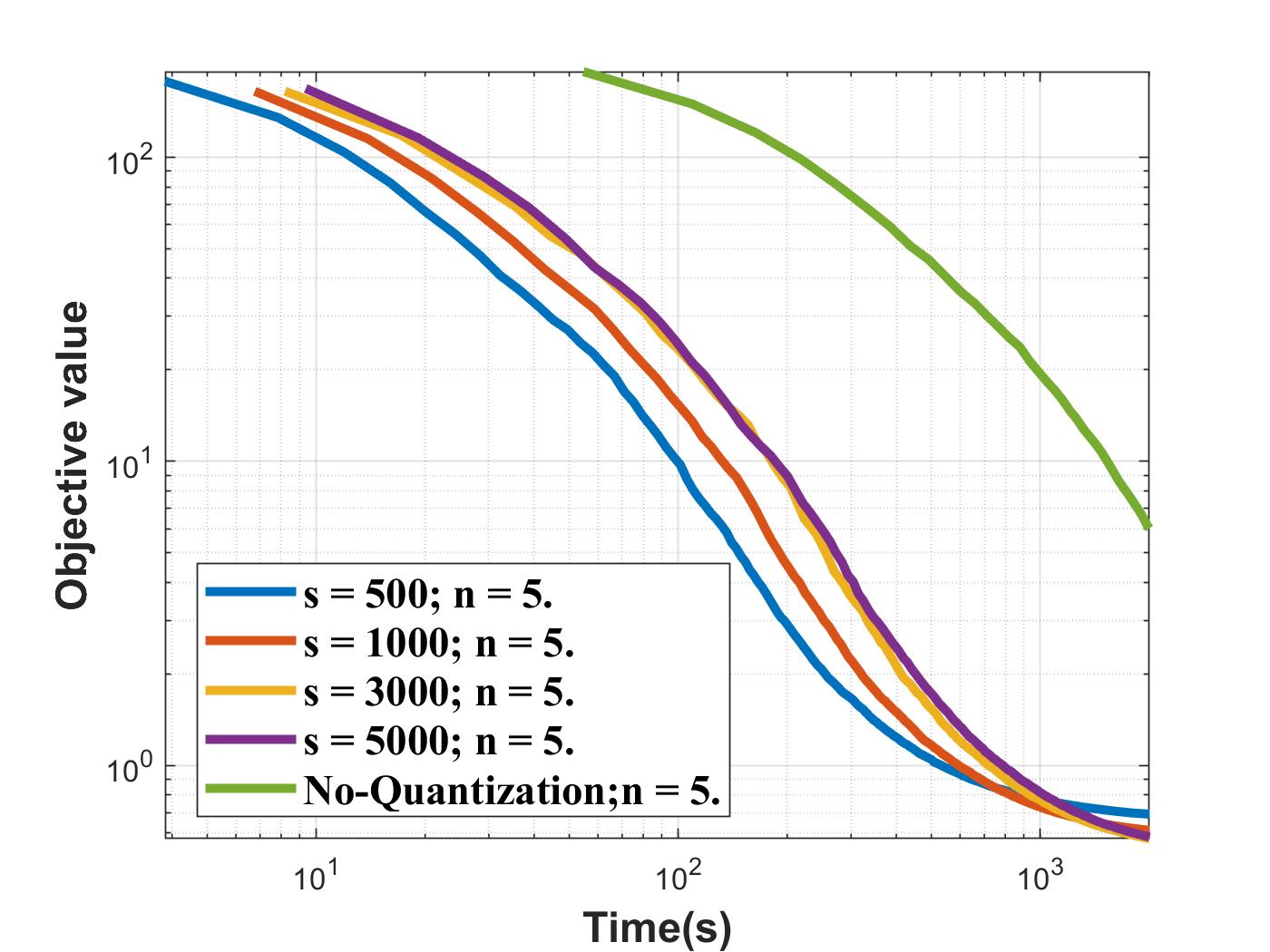}
}
\subfigure[PSNR]{
\centering
\includegraphics[width = 2.5 in]{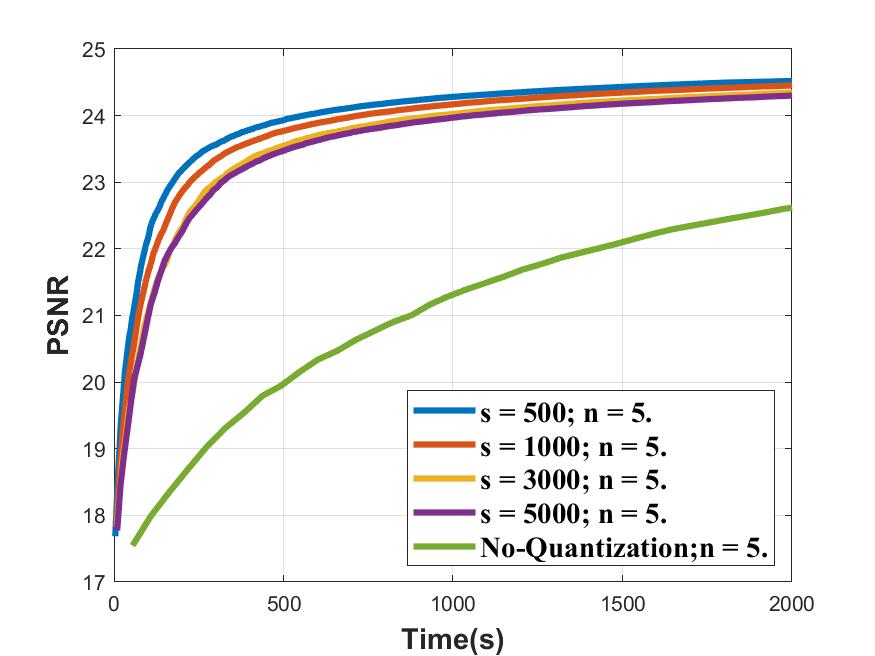}
}
\caption{Objective value and PSNR for different quantization levels.}
\label{CT3D1}
\end{figure} 


\begin{figure}[htbp!]
\subfigure[objective value]{
\centering
\includegraphics[width = 2.5 in]{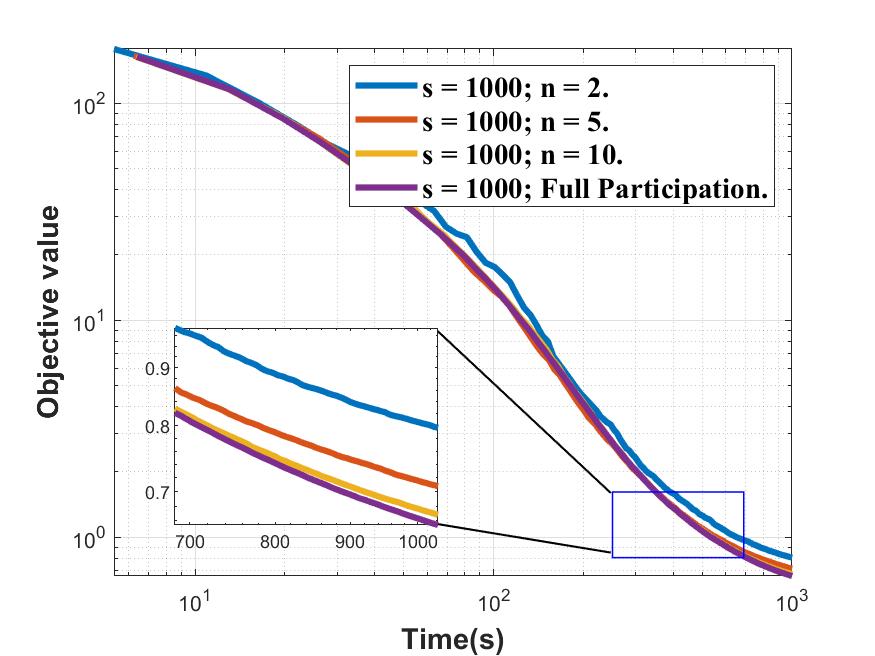}
}
\subfigure[PSNR]{
\centering
\includegraphics[width = 2.5 in]{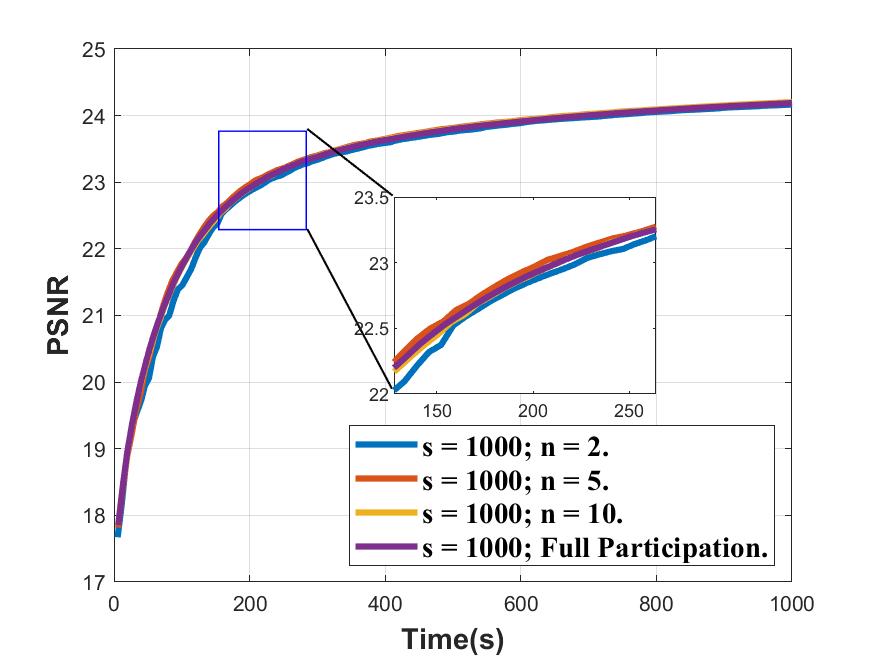}
}\hspace{1mm}
\caption{Objective value and PSNR for different quantization participation number.}
\label{CT3D2}
\end{figure}

\begin{figure}[htbp!]
\subfigure[objective value]{
\centering
\includegraphics[width = 2.5 in]{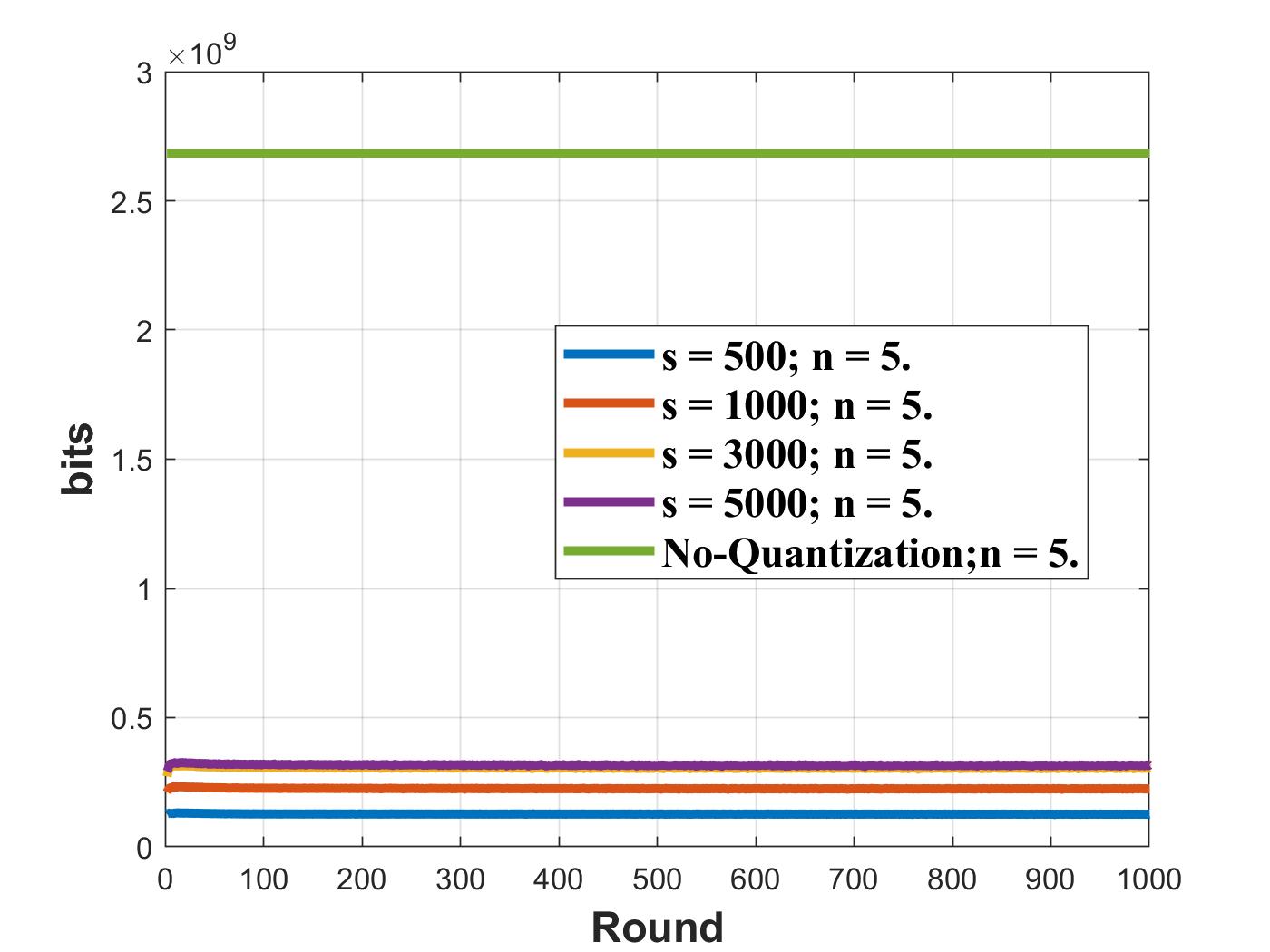}
}
\subfigure[PSNR]{
\centering
\includegraphics[width = 2.5 in]{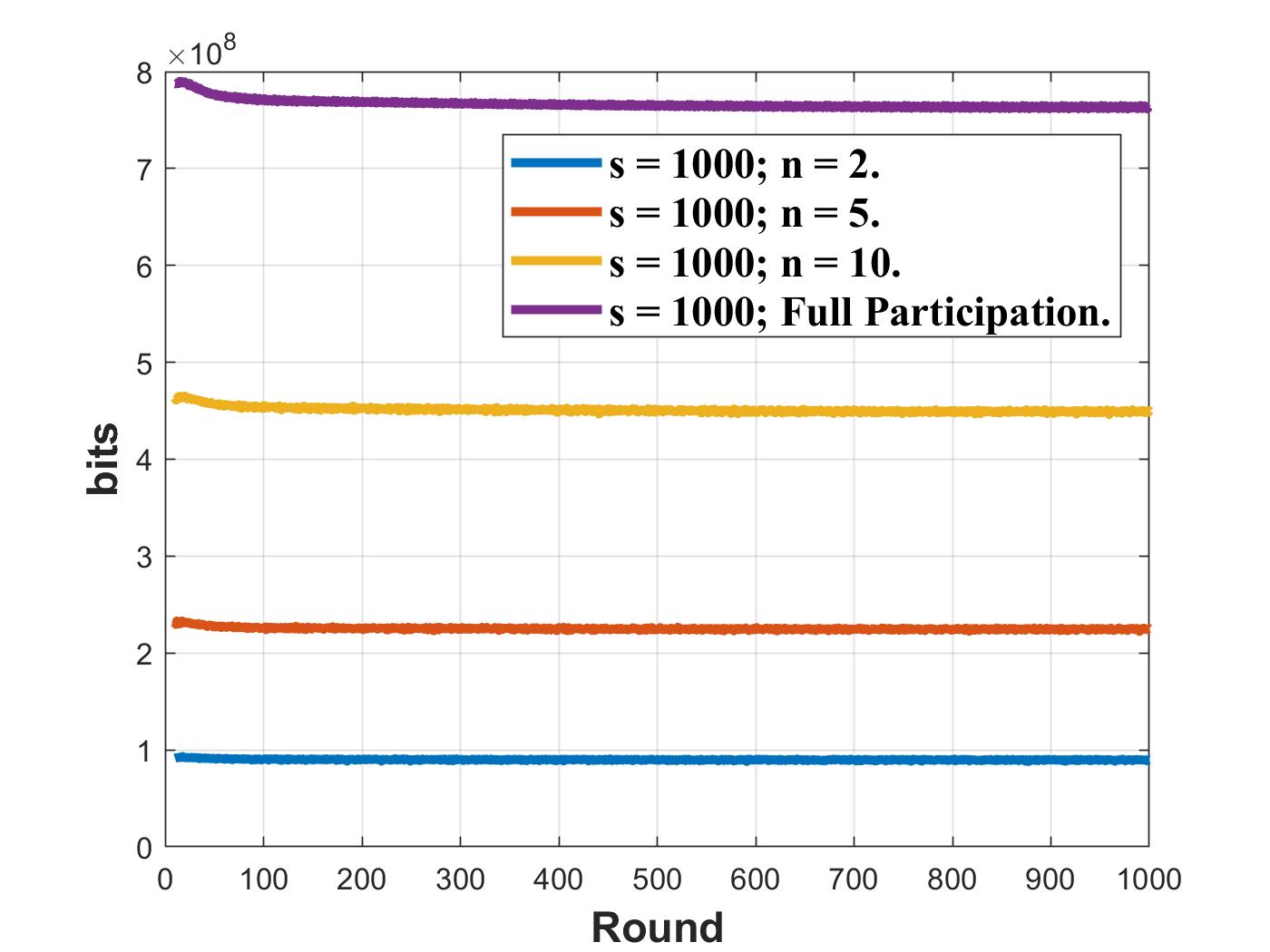}
}\hspace{1mm}
\caption{Objective value and PSNR for different quantization participation number. {\bf (left)} Different quantization level. {\bf (right)} Different participation number.}
\label{CT3DBIT}
\end{figure} 


\begin{figure}[htbp!]
\centering
\subfigure[Ground Truth]{
\centering
\includegraphics[width = 1.8 in ]{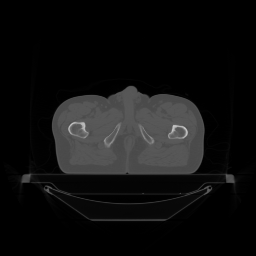}
}
\subfigure[$s = 500, n = 5$, $\mathrm{PSNR} = 44.76$]{
\centering
\includegraphics[width = 1.8 in]{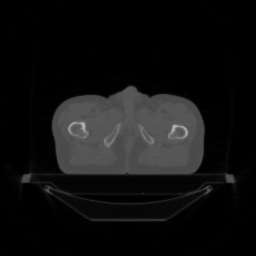}
}
\subfigure[$s = 1000, n = 5$, $\mathrm{PSNR} = 45.78$]{
\centering
\includegraphics[width = 1.8 in]{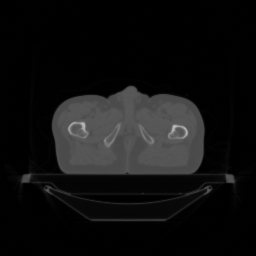}
}
\subfigure[$s = 3000, n = 5$, $\mathrm{PSNR} = 45.83$]{
\centering
\includegraphics[width = 1.8 in]{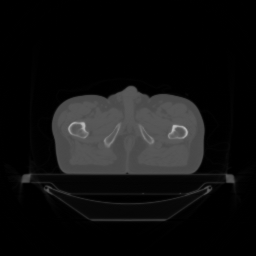}
}
\subfigure[$s = 5000, n = 5$, $\mathrm{PSNR} = 45.86$]{
\centering
\includegraphics[width = 1.8 in]{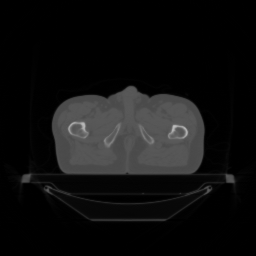}
}
\subfigure[No quantization, $n = 5$, $\mathrm{PSNR} = 45.86$]{
\centering
\includegraphics[width = 1.8 in]{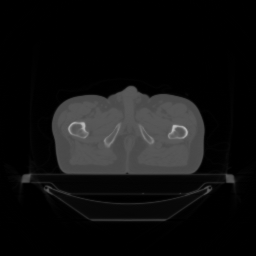}
}\hspace{1mm}
\caption{One slice of the reconstructed image.}
\label{CT3D3}
\end{figure}

\begin{figure}[htbp!]
\centering
\subfigure[Ground Truth]{
\centering
\includegraphics[width = 1.8 in ]{GT.png}
}
\subfigure[$s = 1000, n = 2$, $\mathrm{PSNR} = 45.10$]{
\centering
\includegraphics[width = 1.8 in]{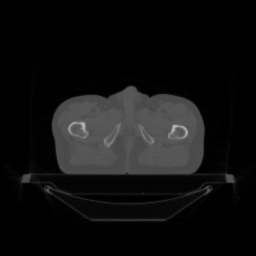}
}
\subfigure[$s = 1000, n = 5$, $\mathrm{PSNR} = 45.78$]{
\centering
\includegraphics[width = 1.8 in]{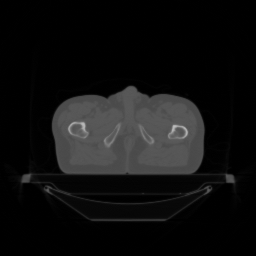}
}
\subfigure[$s = 1000, n = 10$, $\mathrm{PSNR} = 45.92$]{
\centering
\includegraphics[width = 1.8 in]{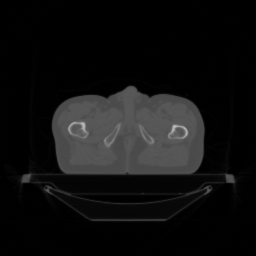}
}
\subfigure[$s = 1000$, full participation, $\mathrm{PSNR} = 45.82$]{
\centering
\includegraphics[width = 1.8 in]{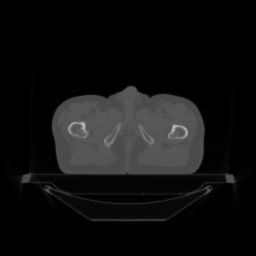}
}\hspace{1mm}
\caption{One slice of the reconstructed image.}
\label{CT3D4}
\end{figure}

\section{Conclusion}
In this paper, we proposed a federated primal dual fixed point (FPDFP) method to solve separable linearly composite convex optimization problems. We additionally combined quantization and partial participation to reduce communication overhead during the learning process. Theoretically, we established the convergence and convergence rate of FPDFP on some standard assumptions and validated the proposed algorithm by graph-guided logistic regression and 3D CT reconstruction. 


\section{Appendix}\label{sec:appendix}
\subsection{Appendix A}
Before proceeding with the proof of the main theorem, we first introduce some preliminary lemmas. Lemma {\ref{lm1}, \ref{Moreau}, \ref{lm2}} are similar to Lemma {7.1, 3.2, 4.1} of \cite{SPDFP}. We include the proof here for our presentation to be self-contained. 

\begin{lemma}\label{Moreau}
Let $r > 0$, $f_0(x), x \in \mathbb{R}^d$ be proper convex l.s.c. and $h(x) = r f_0(x/r)$, then for any$ ~ y \in \mathbb{R}^d$, it holds that $\mathrm{Prox}_h(y) = r \mathrm{Prox}_{r^{-1}f_0}(y/r)$.
\end{lemma}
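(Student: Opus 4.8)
The plan is to work directly from the definition of the proximity operator and reduce the whole statement to a single change of variables. Recall that for a proper convex l.s.c.\ function the minimization problem defining the proximity operator has a strongly convex objective, so the $\arg\min$ is attained at a unique point; this is what lets me read the identity off from the minimizers alone.

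First I would unfold the definition and substitute $h(x) = r f_0(x/r)$, giving
$$\mathrm{Prox}_h(y) = \arg\min_{x \in \mathbb{R}^d}~ r f_0(x/r) + \tfrac{1}{2}\lVert x - y \rVert_2^2.$$
The key move is the substitution $z = x/r$, i.e.\ $x = rz$; since $r > 0$ this is a bijection on $\mathbb{R}^d$, so the minimizer in $x$ and the minimizer in $z$ are related by $x^\star = r z^\star$. Under this substitution, and using $\lVert rz - y\rVert_2^2 = r^2\lVert z - y/r\rVert_2^2$, the objective becomes $r f_0(z) + \tfrac{r^2}{2}\lVert z - y/r \rVert_2^2$.

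Next I would divide the objective by the positive constant $r^2$, which does not move the minimizer, obtaining $r^{-1} f_0(z) + \tfrac{1}{2}\lVert z - y/r\rVert_2^2$. By definition this is exactly the objective whose unique minimizer is $\mathrm{Prox}_{r^{-1}f_0}(y/r)$, so $z^\star = \mathrm{Prox}_{r^{-1}f_0}(y/r)$. Transforming back via $x^\star = r z^\star$ yields $\mathrm{Prox}_h(y) = r\,\mathrm{Prox}_{r^{-1}f_0}(y/r)$, which is the claim.

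There is no substantial obstacle here: the only points requiring care are the bookkeeping of the scalar $r$—in particular the $r^2$ arising from expanding the squared norm and its cancellation after dividing—and the observation that scaling an objective by a positive constant leaves its argmin unchanged. The convexity and lower semicontinuity of $f_0$ enter only to guarantee that the relevant minimizers exist and are unique, so that the argument can be phrased entirely at the level of the proximity operators.
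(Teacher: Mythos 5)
Your proof is correct and follows exactly the route the paper indicates (the paper's own proof is just the one-line remark that the claim follows from the definition of the proximity operator and a change of variables); you have simply carried out that change of variables $x = rz$ explicitly, with the $r^2$ bookkeeping done correctly. No issues.
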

\begin{proof}
The assertion can be proved by using the definition of $\mathrm{Prox}_{f_0}(\cdot)$ and change of variables.
\end{proof}


\begin{proof}[Proof of Lemma {\ref{lm1}}]
By the first optimality condition of problem {(\ref{pb1})}, we have
\begin{equation}\label{apeq}
\begin{aligned}
x^{*}
= \underset{x \in \mathbb{R}^d}{\arg\min} f(x) + g \circ B (x)  
&\quad\Leftrightarrow\quad 0 \in -\nabla f(x^*) - \partial (g \circ B )(x^*) \\
&\quad\Leftrightarrow\quad 0 \in -\gamma_k \nabla f(x^*) - \gamma_k \partial (g \circ B )(x^*) \\
&\quad\Leftrightarrow\quad x^*  \in x^* -\gamma_k \nabla f(x^*) - \gamma_k B^T \partial g(Bx^*) \\
&\quad\Leftrightarrow\quad x^*  \in x^* -\gamma_k \nabla f(x^*) - \lambda \big(B^T \circ \tfrac{\gamma_k}{\lambda} \partial g(Bx^*).
\end{aligned}
\end{equation}
\noindent
Let $v^* \in \partial g(Bx^*)$, 
then {(\ref{apeq})} can then be rewritten as
\begin{equation}\label{apeq5}
x^* = x^* - \gamma_k \nabla f(x^*) - \gamma_k B^T v^* .
\end{equation}
Furthermore, we have $\frac{\gamma_k}{\lambda}v^* \in \partial \frac{\gamma_k}{\lambda}g(Bx^*)$,
which is equivalent to
\begin{equation}\label{apeq2}
\begin{aligned}
Bx^* = \mathrm{Prox}_{\frac{\gamma_k}{\lambda}g}(Bx^* + \tfrac{\gamma_k}{\lambda}v^*) 
&\quad\Leftrightarrow\quad (Bx^* + \tfrac{\gamma_k}{\lambda}p^*) - \tfrac{\gamma_k}{\lambda}v^*  = \mathrm{Prox}_{\frac{\gamma_k}{\lambda}g}(Bx^* + \tfrac{\gamma_k}{\lambda}v^*) \\
&\quad\Leftrightarrow\quad \tfrac{\gamma_k}{\lambda}v^* = (I - \mathrm{Prox}_{\frac{\gamma_k}{\lambda}g})(Bx^* + \tfrac{\gamma_k}{\lambda}v^*) \\
&\quad\Leftrightarrow\quad v^* = \tfrac{\lambda}{\gamma_k}(I - \mathrm{Prox}_{\frac{\gamma_k}{\lambda}g})(Bx^* + \tfrac{\gamma_k}{\lambda}v^*) \\
&\quad\Leftrightarrow\quad v^* = \tfrac{\lambda}{\gamma_k}Bx^* + v^* - \tfrac{\lambda}{\gamma_k}\mathrm{Prox}_{\frac{\gamma_k}{\lambda}g}(Bx^* + \tfrac{\gamma_k}{\lambda}v^*) \\
&\quad\Leftrightarrow\quad v^* = \tfrac{\lambda}{\gamma_k}Bx^* + v^* - \tfrac{\lambda}{\gamma_k}\mathrm{Prox}_{(\frac{\lambda}{\gamma_k})^{-1}g}\big(\tfrac{\tfrac{\lambda}{\gamma_k}Bx^* + v^*}{\tfrac{\lambda}{\gamma_k}}\big) \\
&\quad\Leftrightarrow\quad v^* = \mathrm{Prox}_{\frac{\lambda}{\gamma_k} g^*}\big(\tfrac{\lambda}{\gamma_k}Bx^* + v^*\big).
\end{aligned}
\end{equation}
where the last equality follows from the Moreau identity \cite{cvxbook} and $g^*$ is the conjungate function of $g$. 

Inserting {(\ref{apeq5})} into the last equality of {(\ref{apeq2})} yields
\begin{equation}\label{sumup}
\begin{aligned}
v^* & = \mathrm{Prox}_{\frac{\lambda}{\gamma_k} g^*}\big(\tfrac{\lambda}{\gamma_k}B(x^* - \gamma_k \nabla f(x^*)) + (I - \lambda BB^T)v^* \big).  
\end{aligned}
\end{equation}
Combining {(\ref{apeq5})}, {(\ref{apeq2})} and {(\ref{sumup})}, and the definition of $T_k$, we arrive at 
\begin{equation*}
\left\{
\begin{aligned}
v^* & = \mathrm{Prox}_{\frac{\lambda}{\gamma_k} g^*}\big(\tfrac{\lambda}{\gamma_k}B(x^* - \gamma_k \nabla f(x^*)) + (I - \lambda BB^T)v^* \big) \\
& = T_k(x^*,v^*) \\
x^* & = x^* - \gamma_k \nabla f(x^*) - \gamma_k B^T T_k(x^*,v^*).
\end{aligned}
\right.
\end{equation*}
The converse can be similarly verified, and we conclude the proof.
\end{proof}

\begin{lemma}\label{lm2}
Suppose Assumptions {\ref{smooth},\ref{VR1}} and {\ref{VR2}} hold, let $0 < \lambda \leq 1/\rho_{\max}(BB^T)$ and $(x_k^{(i)},v_k^{(i)}),$ $i = 1,\cdots,N$ be the iterates of $i$'th client in  Algorithm \ref{FPDFP} and $(x^*,v^*)$ as in Lemma {\ref{lm1eq1}}, then
\begin{equation}\label{lm2eq0}
\begin{aligned}
& \mathbb{E}_{R_{k}}[\lVert \overline{x}_{k + 1} - x^* \rVert_2^2 + \tfrac{\gamma_{k + 1}^2}{\lambda}\lVert \overline{v}_{k + 1}  - v^* \rVert_2^2 | \mathcal{F}_k] \\
& \leq  (1 + \tfrac{3\gamma_k^2}{\beta^2} )\lVert x_k - x^* \rVert_2^2 + \tfrac{\gamma_{k}^2}{\lambda} \lVert v_k - v^* \rVert_M^2
 - 2\gamma_k \big< \nabla f(x_k) - \nabla f(x^*), x_k - x^*\big> +3 \gamma_k^2(\delta^2 + \sigma^2),\\
\end{aligned}
\end{equation}
where $\mathbb{E}_{R_{k}}[\cdot | \mathcal{F}_k] = \prod_{i = 1}^N \mathbb{E}_{R_{k}}^{(i)}[\cdot | \mathcal{F}_k]$ and $\mathbb{E}_{R_{k}}^{(i)}[\cdot | \mathcal{F}_k]$ denotes the expectation of stochastic gradient update conditioned on the randomness up to the $k$'th round $\mathcal{F}_k$ for client $i$ . $\overline{x}_{k + 1} = \frac{1}{N}\sum_{i = 1}^N x_{k + 1}^{(i)}$ and $\overline{v}_{k + 1} = \frac{1}{N}\sum_{i = 1}^N v_{k + 1}^{(i)}$.  $\rho_{max}(BB^T)$ is the maximum eigenvalues of the matrix $BB^T$.

\end{lemma}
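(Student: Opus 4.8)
The plan is to prove a deterministic one-step contraction for a single client, then average over clients by convexity, and finally take the conditional expectation over the stochastic gradients. Throughout I set $s_i := x_{k+\frac12}^{(i)} - \big(x^* - \gamma_k\nabla f(x^*)\big)$ for the primal gradient-step discrepancy of client $i$ and $M := I - \lambda BB^T$, so that $\lVert v_k - v^*\rVert_M^2 = \langle v_k - v^*,\, M(v_k - v^*)\rangle$. The choice $0 < \lambda \le 1/\rho_{\max}(BB^T)$ makes $M$ symmetric positive semidefinite with $M \preceq I$; equivalently $\gamma_k^2\lVert B^T u\rVert_2^2 \le \tfrac{\gamma_k^2}{\lambda}\lVert u\rVert_2^2$ for every $u$, a fact I will use repeatedly.

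First I would establish the per-client estimate
\[
\lVert x_{k+1}^{(i)} - x^*\rVert_2^2 + \tfrac{\gamma_k^2}{\lambda}\lVert v_{k+1}^{(i)} - v^*\rVert_2^2 \;\le\; \lVert s_i\rVert_2^2 + \tfrac{\gamma_k^2}{\lambda}\lVert v_k - v^*\rVert_M^2 .
\]
Subtracting the fixed-point identities of Lemma \ref{lm1} from the client updates for $v_{k+1}^{(i)}$ and $x_{k+1}^{(i)}$ in Algorithm \ref{FPDFP}, I get $x_{k+1}^{(i)} - x^* = s_i - \gamma_k B^T(v_{k+1}^{(i)} - v^*)$, while $v_{k+1}^{(i)} - v^*$ is the difference of $\mathrm{Prox}_{\frac{\lambda}{\gamma_k}g^*}$ evaluated at $\tfrac{\lambda}{\gamma_k}Bs_i + M(v_k - v^*)$ and at the corresponding fixed-point argument. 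I would expand $\lVert s_i - \gamma_k B^T(v_{k+1}^{(i)} - v^*)\rVert_2^2$, invoke the firm nonexpansiveness of the proximity operator to control $\tfrac{\gamma_k^2}{\lambda}\lVert v_{k+1}^{(i)} - v^*\rVert_2^2$, and complete the square so that the primal--dual cross terms cancel and leave a non-positive residual. This is the deterministic PDFP contraction and follows the template of Lemma 4.1 of \cite{SPDFP}.

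Next I would sum the per-client estimate over $i = 1,\dots,N$ and apply Jensen's inequality to $\overline{x}_{k+1} = \tfrac1N\sum_i x_{k+1}^{(i)}$ and $\overline{v}_{k+1} = \tfrac1N\sum_i v_{k+1}^{(i)}$, giving $\lVert\overline{x}_{k+1} - x^*\rVert_2^2 + \tfrac{\gamma_k^2}{\lambda}\lVert\overline{v}_{k+1} - v^*\rVert_2^2 \le \tfrac1N\sum_i\lVert s_i\rVert_2^2 + \tfrac{\gamma_k^2}{\lambda}\lVert v_k - v^*\rVert_M^2$, where the $M$-norm term is client-independent and passes through unchanged. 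I would then take $\mathbb{E}_{R_k}[\cdot\,|\,\mathcal{F}_k]$ and split the stochastic gradient as
\[
\tilde{\nabla} f^{(i)}(x_k) - \nabla f(x^*) = \big(\tilde{\nabla} f^{(i)}(x_k) - \nabla f^{(i)}(x_k)\big) + \big(\nabla f^{(i)}(x_k) - \nabla f(x_k)\big) + \big(\nabla f(x_k) - \nabla f(x^*)\big) .
\]
Expanding $\lVert s_i\rVert_2^2$, the unbiasedness $\mathbb{E}[\tilde{\nabla} f^{(i)}(x_k)] = \nabla f^{(i)}(x_k)$ together with $\tfrac1N\sum_i\nabla f^{(i)} = \nabla f$ turns the linear term into $-2\gamma_k\langle\nabla f(x_k) - \nabla f(x^*),\, x_k - x^*\rangle$, while $\lVert a+b+c\rVert_2^2 \le 3(\lVert a\rVert_2^2 + \lVert b\rVert_2^2 + \lVert c\rVert_2^2)$, Assumptions \ref{VR1} and \ref{VR2}, and the $\tfrac1\beta$-Lipschitz bound $\lVert\nabla f(x_k) - \nabla f(x^*)\rVert_2^2 \le \tfrac1{\beta^2}\lVert x_k - x^*\rVert_2^2$ produce the variance term $3\gamma_k^2(\sigma^2 + \delta^2)$ and the factor $(1 + \tfrac{3\gamma_k^2}{\beta^2})$. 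Finally, since $\gamma_k$ is decreasing, $\gamma_{k+1}\le\gamma_k$ lets me replace $\tfrac{\gamma_k^2}{\lambda}$ by $\tfrac{\gamma_{k+1}^2}{\lambda}$ in front of $\lVert\overline{v}_{k+1} - v^*\rVert_2^2$ on the left, delivering exactly {(\ref{lm2eq0})}.

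I expect the main obstacle to be the deterministic per-client contraction of the second paragraph: arranging that the primal correction $-\gamma_k B^T(v_{k+1}^{(i)} - v^*)$ and the proximal dual increment cancel so that the dual error reappears on the right only through the weaker norm $\lVert v_k - v^*\rVert_M^2$ rather than the full Euclidean norm. This cancellation is precisely where $\lambda\rho_{\max}(BB^T)\le 1$ enters, and firm nonexpansiveness (not merely nonexpansiveness) of the proximity operator appears to be needed to absorb the term $\gamma_k^2\lVert B^T(v_{k+1}^{(i)} - v^*)\rVert_2^2$ arising from the primal expansion. Everything following that step is routine bookkeeping with Jensen's inequality and the variance assumptions.
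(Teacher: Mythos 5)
Your proposal is correct and follows essentially the same route as the paper's proof: the deterministic per-client contraction via firm nonexpansiveness of the proximity operator and the exact cancellation of the primal--dual cross terms (with the residual absorbed by $M \succeq 0$, which is where $\lambda \le 1/\rho_{\max}(BB^T)$ enters), followed by Jensen's inequality over clients, unbiasedness for the linear term, and the three-way splitting of $\tilde{\nabla} f^{(i)}(x_k) - \nabla f(x^*)$ with Assumptions \ref{smooth}, \ref{VR1}, \ref{VR2} for the variance term. The only difference is cosmetic: your decomposition's third piece $\nabla f(x_k) - \nabla f(x^*)$ is actually the correct one (the paper's displayed splitting contains a typo writing $\nabla f^{(i)}(x_k) - \nabla f(x^*)$ there), and you keep $\gamma_k$ on the left of the per-client estimate before invoking monotonicity of $\gamma_k$, exactly as the paper does in its step $\circled{2}$.
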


\begin{proof}
Let $(x^*,v^*)$ be defined as in Lemma {\ref{lm1}}, and $(x_k^{(i)},v_k^{(i)})$ be the iterates of $i$'th client in Algorithm~\ref{FPDFP}. $T_{1,k}(\cdot) = \mathrm{Prox}_{\frac{\lambda}{\gamma_k}g^*}(\cdot)$,
and denote
\begin{equation*}\label{lm2eq1}
\begin{aligned}
\varphi_{1,k}^{(i)}(x,y) & = \tfrac{\lambda}{\gamma_k}B(x - \gamma_k \tilde{\nabla} f^{(i)}(x))+ (I - \lambda B B^T)y 
= \tfrac{\lambda}{\gamma_k}Bg_{1,k}^{(i)}(x) + My, \\
\textrm{and}~~~~\varphi_{2,k}(x,y) & = \tfrac{\lambda}{\gamma_k}B(x - \gamma_k\nabla f(x))+ (I - \lambda B B^T)y 
= \tfrac{\lambda}{\gamma_k}Bg_{2,k}(x) + My, \\
\end{aligned}
\end{equation*}
where $g_{1,k}^{(i)}(x) = x - \gamma_k \tilde{\nabla} f^{(i)}(x)$ and $g_{2,k}(x) = x - \gamma_k\nabla f(x)$, $M = I - \lambda B B^T$. \\
In the following, we derive the one step estimate for $x_k^{(i)}$ and $v_k^{(i)}$, respectively.
\begin{enumerate}[label=\textbf{\roman*}),leftmargin= *]
\item  Estimation of $\lVert v_{k + 1}^{(i)} - v^* \rVert_2^2$:
\begin{equation}\label{lm2eq2}
\begin{aligned}
\lVert v_{k + 1}^{(i)} - v^* \rVert_2^2
& = \lVert T_{1,k}(\varphi_{1,k}^{(i)}(x_k,v_k)) - v^* \rVert_2^2 \\
& = \lVert T_{1,k}(\varphi_{1,k}^{(i)}(x_k,v_k)) - T_{1,k}(\varphi_{2,k}(x^*,v^*)) \rVert_2^2 \\
& \leq \big< T_{1,k}(\varphi_{1,k}^{(i)}(x_k,v_k)) - T_{1,k}(\varphi_{2,k}(x^*,v^*)),\varphi_{1,k}^{(i)}(x_k,v_k) - \varphi_{2,k}(x^*,v^*) \big> \\
& = \frac{\lambda}{\gamma_k}\big< T_{1,k}(\varphi_{1,k}^{(i)}(x_k,v_k)) - T_{1,k}(\varphi_{2,k}(x^*,v^*)),B(g_{1,k}^{(i)}(x_k) - g_{2,k}(x^*)) \big> \\
& \qquad + \big< T_{1,k}(\varphi_{1,k}^{(i)}(x_k,v_k)) - T_{1,k}(\varphi_{2,k}(x^*,v^*)), M(v_k - v^*) \big>.
\end{aligned}
\end{equation}
The second equality follows from Eq. {(\ref{lm1eq1})} and the inequality follows from the firmly non-expansiveness of $T_1^{(k)}$.
For what follows, we denote $T_{1,k}(\varphi_{1,i}^{(k)}) = T_{1,k}(\varphi_{1,k}^{(i)}(x_k,v_k))$ and
$T_{1,k}(\varphi_2^{(k)}) = T_{1,k}(\varphi_2^{(k)}(x^*,v^*))$.

\item Estimation of $\lVert x_{k + 1}^{(i)} - x^* \rVert_2^2:$
\begin{equation}\label{lm2eq3}
\begin{aligned}
& \lVert x_{k + 1}^{(i)} - x^* \rVert_2^2  \\
& \overset{\tiny{\circled{1}}}{=} \lVert x_k - \gamma_k \tilde{\nabla} f^{(i)}(x_k) - \gamma_k B^T T_{1,k}(\varphi_{1,k}^{(i)}) - \big(x^* - \gamma_k \nabla f(x^*) - \gamma_k B^T T_{1,k}(\varphi_2^{(k)}) \big)  \rVert_2^2 \\
& = \lVert g_{1,k}^{(i)}(x_k) - \gamma_k B^T T_{1,k}(\varphi_{1,k}^{(i)}) - \big(g_{2,k}(x^*) - \gamma_k B^T T_{1,k}(\varphi_{2,k} \big) \rVert_2^2 \\
& = \lVert g_{1,k}^{(i)}(x_k) - g_{2,k}(x^*) - \gamma_k B^T \big(T_{1,k}(\varphi_{1,k}^{(i)}) - T_{1,k}(\varphi_{2,k}) \big)\rVert_2^2 \\
& = \lVert g_{1,k}^{(i)}(x_k)  - g_{2,k}(x^*) \rVert_2^2  - 2\gamma_k \big< B^T \big( T_{1,k}(\varphi_{1,k}^{(i)}) - T_1^{(k)}(\varphi_{2,k})\big), g_{1,k}^{(i)}(x_k) - g_{2,k}(x^*) \big> \\
&\qquad + \tfrac{\gamma_k^2}{\lambda^2}\lVert \lambda B^T ( T_1^{(k)}(\varphi_{1,i}^{(k)}) - T_{1,k}(\varphi_{2,k} ) \rVert_2^2 \\
& \overset{\tiny{\circled{2}}}{=}  \lVert g_{1,k}^{(i)}(x_k)  - g_{2,k}(x^*) \rVert_2^2  - 2\gamma_k \big< B^T \big( T_{1,k}(\varphi_{1,k}^{(i)}) - T_{1,k}(\varphi_{2,k})\big), g_{1,k}^{(i)}(x_k) - g_{2,k}(x^*) \big> \\
&\qquad - \tfrac{\gamma_k^2}{\lambda}\lVert ( T_{1,k}(\varphi_{1,k}^{(i)}) - T_{1,k}(\varphi_{2,k} ) \rVert_M^2 + \frac{\gamma_k^2}{\lambda}\lVert ( T_{1,k}(\varphi_{1,k}^{(i)}) - T_{1,k}(\varphi_{2,k}) ) \rVert_2^2.
\end{aligned}
\end{equation}
where
\begin{itemize}
  \item $\small{\circled{1}}$ follows from the update of $x_{k + 1}^{(i)}$ and {(\ref{lm1eq1})}.
  \item $\small{\circled{2}}$ we use the definition $M = I - \lambda BB^T$ and $\lVert y \rVert_M = \sqrt{\langle y,My \rangle}$.
\end{itemize}
\end{enumerate}
We are now sufficient to get the one step estimate of the Algorithm \ref{FPDFP} based on Lyapunov function 
$\lVert x_{k + 1}^{(i)} - x^* \rVert_2^2 + \frac{\gamma_{k + 1}^2}{\lambda}\lVert v_{k + 1}^{(i)} - v^* \rVert_2^2$ as follows: \\
\begin{equation}\label{lm2eq4}
\begin{aligned}
& \lVert x_{k + 1}^{(i)} - x^* \rVert_2^2 + \tfrac{\gamma_{k + 1}^2}{\lambda}\lVert v_{k + 1}^{(i)} - v^* \rVert_2^2 \\
& \overset{\tiny{\circled{1}}}{=} \lVert g_{1,k}^{(i)}(x_k)  - g_{2,k}(x^*) \rVert_2^2  - 2\gamma_k \big< B^T \big( T_{1,k}(\varphi_{1,k}^{(i)}) - T_{1,k}(\varphi_{2,k})\big), g_{1,k}^{(i)}(x_k) - g_{2,k}(x^*) \big> \\
&\qquad - \tfrac{\gamma_{k}^2}{\lambda}\lVert ( T_{1,k}(\varphi_{1,k}^{(i)}) - T_{1,k}(\varphi_{2,k}) ) \rVert_M^2 + \tfrac{\gamma_{k}^2}{\lambda}\lVert ( T_{1,k}(\varphi_{1,k}^{(i)}) - T_{1,k}(\varphi_{2,k}) ) \rVert_2^2 \\
&\qquad + \tfrac{\gamma_{k + 1}^2}{\lambda}\lVert ( T_{1,k}(\varphi_{1,k}^{(i)}) - T_{1,k}(\varphi_{2,k}) ) \rVert_2^2 \\
& \overset{\tiny{\circled{2}}}{\leq} \lVert g_{1,k}^{(i)}(x_k)  - g_{2,k}(x^*) \rVert_2^2  - 2\gamma_k \big< B^T \big( T_{1,k}(\varphi_{1,k}^{(i)}) - T_{1,k}(\varphi_{2,k})\big), g_{1,k}^{(i)}(x_k) - g_{2,k}(x^*) \big> \\
&\qquad - \tfrac{\gamma_{k}^2}{\lambda}\lVert ( T_{1,k}(\varphi_{1,k}^{(i)}) - T_{1,k}(\varphi_{2,k}) ) \rVert_M^2 + 2\tfrac{\gamma_{k}^2}{\lambda}\lVert ( T_{1,k}(\varphi_{1,k}^{(i)}) - T_{1,k}(\varphi_{2,k}) ) \rVert_2^2 \\
& \overset{\tiny{\circled{3}}}{\leq} \lVert g_{1,k}^{(i)}(x_k)  - g_{2,k}(x^*) \rVert_2^2  - \underline{2\gamma_k \big< B^T \big( T_{1,k}(\varphi_{1,k}^{(i)}) - T_{1,k}(\varphi_{2,k})\big), g_{1,k}^{(i)}(x_k) - g_{2,k}(x^*) \big>} \\
&\qquad - \tfrac{\gamma_{k}^2}{\lambda}\lVert ( T_{1,k}(\varphi_{1,k}^{(i)}) - T_{1,k}(\varphi_{2,k}) ) \rVert_M^2 
 + \underline{2\tfrac{\gamma_{k}^2}{\lambda} \tfrac{\lambda}{\gamma_k}\big< T_{1,k}(\varphi_{1,k}^{(i)}) - T_{1,k}(\varphi_2^{(k)}),B(g_{1,k}^{(i)}(x_k) - g_{2,k}(x^*)) \big>} \\
&\qquad + 2\tfrac{\gamma_{k}^2}{\lambda} \big< T_{1,k}(\varphi_{1,k}^{(i)}) - T_{1,k}(\varphi_{2,k}), M(v_k - v^*) \big>\\
& = \lVert g_{1,k}^{(i)}(x_k)  - g_{2,k}(x^*) \rVert_2^2 + 2\tfrac{\gamma_{k}^2}{\lambda}\big< T_{1,k}(\varphi_{1,k}^{(i)}) - T_{1,k}(\varphi_{2,k}), M(v_k - v^*) \big>  \\
& \qquad - \tfrac{\gamma_{k}^2}{\lambda}\lVert ( T_{1,k}(\varphi_{1,k}^{(i)}) - T_{1,k}(\varphi_{2,k}) ) \rVert_M^2 \\
& = \lVert g_{1,k}^{(i)}(x_k)  - g_{2,k}(x^*) \rVert_2^2 + \tfrac{\gamma_{k}^2}{\lambda}\lVert v_k - v^* \rVert_M^2 - \tfrac{\gamma_{k}^2}{\lambda}\lVert ( T_{1,k}(\varphi_{1,k}^{(i)}) - T_{1,k}(\varphi_{2,k}) ) - (v_k - v^*) \rVert_M^2 \\
& \overset{\tiny{\circled{4}}}{\leq} \lVert g_{1,k}^{(i)}(x_k)  - g_{2,k}(x^*) \rVert_2^2 + \tfrac{\gamma_{k}^2}{\lambda}\lVert v_k - v^* \rVert_M^2,
\end{aligned}
\end{equation}
where
\begin{itemize}
  \item $\small{\circled{1}}$ uses ({\ref{lm2eq3}}) .
  \item $\small{\circled{2}}$ follows from the fact that $\gamma_k$ is decreasing with respect to $k$.
  \item $\small{\circled{3}}$ uses ({\ref{lm2eq2}}).
  \item $\small{\circled{4}}$ uses the fact that $0 < \lambda \leq \frac{1}{\rho_{max}(B B^T)} $, which means that $M$ is positive semidefinite.
\end{itemize}
\noindent Taking the conditional expectation $\mathbb{E}_{R_{k}}^{(i)}[\cdot | \mathcal{F}_k]$ of both sides of {(\ref{lm2eq4})}, one obtains
\begin{equation}\label{lm2eq5}
\begin{aligned}
& \mathbb{E}_{R_{k}}^{(i)}\big[\lVert x_{k + 1}^{(i)} - x^* \rVert_2^2 + \tfrac{\gamma_{k+1}^2}{\lambda}\lVert v_{k + 1}^{(i)} - v^* \rVert_2^2 \big |\mathcal{F}_k \big]\\
 & \leq \mathbb{E}_{R_{k}}^{(i)}\big[\lVert g_{k,i}^{(1)}(x_k)  - g_k^{(2)}(x^*) \rVert_2^2 \big |\mathcal{F}_k \big]
+ \tfrac{\gamma_{k}^2}{\lambda}\lVert v_k - v^* \rVert_M^2  \\
& = \mathbb{E}_{R_{k}}^{(i)}\big[\lVert x_k - x^* - \gamma_k(\tilde{\nabla} f^{(i)}(x_k) - \nabla f(x^*)) \rVert_2^2 \big |\mathcal{F}_k \big] + \tfrac{\gamma_{k}^2}{\lambda}\lVert v_k - v^* \rVert_M^2 \\
& = \lVert x_k - x^* \rVert_2^2 + \tfrac{\gamma_{k}^2}{\lambda} \lVert v_k - v^* \rVert_M^2 
 - 2\gamma_k \big< \nabla f^{(i)}(x_k) - \nabla f(x^*), x_k - x^*\big> \\
&\qquad  + \gamma_k^2 \mathbb{E}_{R_{k}}^{(i)}[\lVert \tilde{\nabla} f^{(i)}(x_k) - \nabla f(x^*) \rVert_2^2 \big| \mathcal{F}_k],\\
\end{aligned}
\end{equation}
where, in the third term of the last equality, we use the fact that $\mathbb{E}_{R_{k}}^{(i)}[\tilde{\nabla} f^{(i)}(x_k) | \mathcal{F}_k) ] = \nabla f^{(i)}(x_k) $. 
Using the convexity of $\lVert \cdot \rVert_2^2$, we have 
\begin{equation}\label{lm2eq6}
\begin{aligned}
\lVert \overline{x}_{k + 1} - x^* \rVert_2^2 + \frac{\gamma_{k + 1}^2}{\lambda}\lVert \overline{v}_{k + 1}  - v^* \rVert_2^2
\leq \frac{1}{N}\sum_{i = 1}^N \lVert x_{k + 1}^{(i)} - x^* \rVert_2^2 + \frac{\gamma_{k + 1}^2}{\lambda}\lVert v_{k + 1}^{(i)}  - v^* \rVert_2^2. 
\end{aligned}
\end{equation}
Taking conditional expectation $\mathbb{E}_{R_{k}} = \prod_{i = 1}^N \mathbb{E}_{R_{k}}^{(i)}[\cdot/\mathcal{F}_k]$ on both side of {(\ref{lm2eq6})}, one obtains
\begin{equation}\label{lm2eq7}
\begin{aligned}
& \mathbb{E}_{R_{k}}[\lVert \overline{x}_{k + 1} - x^* \rVert_2^2 + \tfrac{\gamma_{k + 1}^2}{\lambda}\lVert \overline{v}_{k + 1}  - v^* \rVert_2^2|\mathcal{F}_k] \\
& \overset{\tiny{\circled{1}}}{\leq} \frac{1}{N}\prod_{i = 1}^N \mathbb{E}_{R_{k}}^{(i)} \big[ \sum_{i = 1}^N \lVert x_{k + 1}^{(i)} - x^* \rVert_2^2 + \tfrac{\gamma_{k + 1}^2}{\lambda}\lVert v_{k + 1}^{(i)}  - v^* \rVert_2^2 |\mathcal{F}_k\big] \\ 
& \overset{\tiny{\circled{2}}}{=} \frac{1}{N}\sum_{i = 1}^N \mathbb{E}_{R_{k}}^{(i)}[ \lVert x_{k + 1}^{(i)} - x^* \rVert_2^2 + \tfrac{\gamma_{k + 1}^2}{\lambda}\lVert v_{k + 1}^{(i)}  - v^* \rVert_2^2 | \mathcal{F}_k ]  \\
& \overset{\tiny{\circled{3}}}{\leq} \lVert x_k - x^* \rVert_2^2 + \tfrac{\gamma_{k}^2}{\lambda} \lVert v_k - v^* \rVert_M^2 
 - 2\gamma_k \big< \frac{1}{N} \sum_{i = 1}^N\nabla f^{(i)}(x_k) - \nabla f(x^*), x_k - x^*\big> \\
&\qquad  + \frac{\gamma_k^2}{N} \sum_{i = 1}^N\mathbb{E}_{R_{k}}^{(i)}[ \lVert \tilde{\nabla} f^{(i)}(x_k) - \nabla f(x^*) \rVert_2^2 \big| \mathcal{F}_k ] \\
& \overset{\tiny{\circled{4}}}{=} \lVert x_k - x^* \rVert_2^2 + \tfrac{\gamma_{k}^2}{\lambda} \lVert v_k - v^* \rVert_M^2 
 - 2\gamma_k \big< \nabla f(x_k) - \nabla f(x^*), x_k - x^*\big> \\
&\qquad  + \frac{\gamma_k^2}{N} \sum_{i = 1}^N\mathbb{E}_{R_{k }}^{(i)}[ \lVert \tilde{\nabla} f^{(i)}(x_k) - \nabla f(x^*) \rVert_2^2 \big| \mathcal{F}_k ], \\
\end{aligned}
\end{equation}
where
\begin{itemize}
  \item $\small{\circled{1}}$ uses  {(\ref{lm2eq6})}.
  \item $\small{\circled{2}}$ follows from the independence of sampling process of computing stochastic gradient across the clients.
  \item $\small{\circled{3}}$ uses {(\ref{lm2eq5})}.
  \item $\small{\circled{4}}$ uses the definition of $f$.
\end{itemize}
For each $i$
\begin{equation}\label{lm2eq8}
\begin{aligned}
& \mathbb{E}_{R_{k}}^{(i)}[ \lVert \tilde{\nabla} f^{(i)}(x_k) - \nabla f(x^*) \rVert_2^2 \big| \mathcal{F}_k ] \\
& = \mathbb{E}_{R_{k}}^{(i)}[ \lVert \tilde{\nabla} f^{(i)}(x_k) - \nabla f^{(i)}(x_k) 
+ \nabla f^{(i)}(x_k) - \nabla f^{(i)}(x^*)
+ \nabla f^{(i)}(x^*) - \nabla f(x^*) \rVert_2^2 \big| \mathcal{F}_k ] \\
& {\overset{\tiny{\circled{1}}}{\leq} 3\mathbb{E}_{R_{k}}^{(i)}[ \lVert \tilde{\nabla} f^{(i)}(x_k) - \nabla f^{(i)}(x_k) \rVert_2^2\big| \mathcal{F}_k ]
+ 3\mathbb{E}_{R_{k}}^{(i)}[ \lVert \nabla f^{(i)}(x_k) - \nabla f(x_k) \rVert_2^2 ]} \\
&{\qquad + 3\mathbb{E}_{R_{k}}^{(i)}[ \lVert \nabla f^{(i)}(x_k) - \nabla f(x^*) \rVert_2^2\big| \mathcal{F}_k ]} \\
& \overset{\tiny{\circled{2}}}{\leq} \tfrac{3}{\beta^2} \lVert x_k - x^*\rVert_2^2 + 3(\delta^2 + \sigma^2),
\end{aligned}  
\end{equation}
where
\begin{itemize}
  \item $\small{\circled{1}}$ follows from the inequality $\lVert a + b + c \rVert_2^2 \leq 3\lVert a \rVert_2^2 + 3\lVert b \rVert_2^2 + 3\lVert c \rVert_2^2$.  
  \item $\small{\circled{2}}$ uses the Assumption {\ref{smooth}, \ref{VR1}} and {\ref{VR2}}.
\end{itemize}
Combing {(\ref{lm2eq7})} and {(\ref{lm2eq8})}, we have
\begin{equation}
\begin{aligned}
& \mathbb{E}_{R_{k}}[ \lVert \overline{x}_{k + 1} - x^* \rVert_2^2 + \tfrac{\gamma_{k + 1}^2}{\lambda}\lVert \overline{v}_{k + 1}  - v^* \rVert_2^2 | \mathcal{F}_k ] \\
& \leq  (1 + \tfrac{3\gamma_k^2}{\beta^2} ) \lVert x_k - x^* \rVert_2^2 + \tfrac{\gamma_{k}^2}{\lambda} \lVert v_k - v^* \rVert_M^2 
 - 2\gamma_k \big< \nabla f(x_k) - \nabla f(x^*), x_k - x^*\big> + 3 \gamma_k^2(\delta^2 + \sigma^2).\\
\end{aligned}  
\end{equation}
This completes the proof. \qquad
\end{proof}

\subsection{Appendix B}
Observe that there are three sources of randomness in the update of FPDFP: (1) selection of clients; (2) computing of stochastic gradient (local update); (3) quantization. These three stochastic processes are independent. In this part, we focus on decoupling the three shadow sequences associated with the three stochastic processes (see Lemma {\ref{lm4}}) and provide estimates of their variance. 

\begin{lemma}\label{lm4}
If Assumption {\ref{Quantizer}} holds, we then have the following estimate 
\begin{equation}\label{lm4eq01}
\mathbb{E}_{k + 1}[ \lVert x_{k + 1} - x^*\rVert_2^2 ]  =  \mathbb{E}_{k + 1}[ \lVert x_{k + 1} - \hat{x}_{k + 1} \rVert_2^2 ]
 + \mathbb{E}_{k + 1}[ \lVert \hat{x}_{k + 1} - \overline{x}_{k + 1}\rVert_2^2 ]  
 + \mathbb{E}_{k + 1}[ \lVert \overline{x}_{k + 1} - x^*\rVert_2^2 ],  
\end{equation}
and 
\begin{equation}\label{lm4eq02}
\mathbb{E}_{k + 1}[ \lVert v_{k + 1} - v^*\rVert_2^2) ]
=  \mathbb{E}_{k + 1}[ \lVert v_{k + 1} - \hat{v}_{k + 1} \rVert_2^2 ]  
 + \mathbb{E}_{k + 1}[ \lVert \hat{v}_{k + 1} - \overline{v}_{k + 1}\rVert_2^2 ]  
 + \mathbb{E}_{k + 1}[ \lVert \overline{v}_{k + 1} - v^*\rVert_2^2],
\end{equation}
where $\mathbb{E}_{k + 1}[\cdot]$ is expectation of all randomness up to $k + 1$'th round.
$\hat{x}_{k + 1} = x_k + \frac{1}{N}\sum_{i = 1}^N Q(x_{k + 1}^{(i)} - x_k)$
and $\hat{v}_{k + 1} = \frac{1}{N}\sum_{i = 1}^N Q(v_{k + 1}^{(i)})$. The $\overline{x}_{k + 1}$ and $\overline{v}_{k + 1}$ are defined as that in Lemma \textbf{\ref{lm2}}. The $(x^*,v^*)$ are the optimal primal dual pair defined in Lemma \textbf{\ref{lm1}}.
\end{lemma}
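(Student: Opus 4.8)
The plan is to obtain \eqref{lm4eq01} by applying the elementary bias--variance identity $\mathbb{E}\lVert Z - c\rVert_2^2 = \mathbb{E}\lVert Z - \mathbb{E}Z\rVert_2^2 + \lVert \mathbb{E}Z - c\rVert_2^2$ (valid for any deterministic $c$ and integrable $Z$, with $\mathbb{E}Z$ the conditional mean) twice, peeling off one source of randomness at a time. The three randomness sources --- client selection $\mathcal{S}_k$, stochastic gradients, and quantization --- are independent given $\mathcal{F}_k$, and the three objects are nested in measurability: $\overline{x}_{k+1}$ depends only on the local (gradient) randomness, $\hat{x}_{k+1}$ additionally on the quantization, and $x_{k+1}$ additionally on the selection.

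First I would condition on everything except the selection $\mathcal{S}_k$. With the full collection $\{Q(x_{k+1}^{(i)} - x_k)\}_{i=1}^N$ fixed, the only remaining randomness in $x_{k+1} = x_k + \tfrac{1}{n}\sum_{i\in\mathcal{S}_k}Q(x_{k+1}^{(i)}-x_k)$ is which $n$ clients are drawn. The crucial fact is that uniform sampling without replacement is unbiased: since each index lies in $\mathcal{S}_k$ with probability $n/N$, one has $\mathbb{E}_{\mathcal{S}_k}[\tfrac{1}{n}\sum_{i\in\mathcal{S}_k} a_i] = \tfrac{1}{N}\sum_{i=1}^N a_i$ for any fixed vectors $a_i$, whence $\mathbb{E}_{\mathcal{S}_k}[x_{k+1}] = \hat{x}_{k+1}$. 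Applying the identity with $Z = x_{k+1}$, $\mathbb{E}Z = \hat{x}_{k+1}$, $c = x^*$ and then taking the outer expectation gives
\[
\mathbb{E}_{k+1}[\lVert x_{k+1} - x^*\rVert_2^2] = \mathbb{E}_{k+1}[\lVert x_{k+1} - \hat{x}_{k+1}\rVert_2^2] + \mathbb{E}_{k+1}[\lVert \hat{x}_{k+1} - x^*\rVert_2^2].
\]
Next I would condition on only the gradient randomness, so that the local iterates $x_{k+1}^{(i)}$, and hence $\overline{x}_{k+1}$, are fixed and the only remaining randomness in $\hat{x}_{k+1}$ is the quantization. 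By the unbiasedness of the quantizer (Assumption \ref{Quantizer}), $\mathbb{E}[Q(x_{k+1}^{(i)}-x_k)] = x_{k+1}^{(i)} - x_k$, so $\mathbb{E}[\hat{x}_{k+1}\mid\text{gradients}] = \overline{x}_{k+1}$; a second application of the identity with $Z = \hat{x}_{k+1}$, $\mathbb{E}Z = \overline{x}_{k+1}$, $c = x^*$ yields $\mathbb{E}_{k+1}[\lVert \hat{x}_{k+1} - x^*\rVert_2^2] = \mathbb{E}_{k+1}[\lVert \hat{x}_{k+1} - \overline{x}_{k+1}\rVert_2^2] + \mathbb{E}_{k+1}[\lVert \overline{x}_{k+1} - x^*\rVert_2^2]$. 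Substituting this into the previous display gives \eqref{lm4eq01}, and \eqref{lm4eq02} follows verbatim since $v_{k+1}=\tfrac{1}{n}\sum_{i\in\mathcal{S}_k}Q(v_{k+1}^{(i)})$, $\hat{v}_{k+1}=\tfrac{1}{N}\sum_{i=1}^N Q(v_{k+1}^{(i)})$, $\overline{v}_{k+1}=\tfrac{1}{N}\sum_{i=1}^N v_{k+1}^{(i)}$ have exactly the same participation/quantization structure.

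The computations are routine; the only point requiring care is the bookkeeping of the conditioning, so that at each step the ``outer'' vector ($\hat{x}_{k+1}-x^*$, then $\overline{x}_{k+1}-x^*$) is measurable with respect to the conditioning $\sigma$-algebra while the ``inner'' difference has conditional mean zero --- this is precisely what makes the cross terms vanish and turns each squared norm into a clean sum. It relies on the independence of the three randomness sources given $\mathcal{F}_k$ and on the standard convention that all $N$ clients are regarded as having computed and quantized their local updates, with $\mathcal{S}_k$ merely selecting which messages are aggregated, so that $\hat{x}_{k+1}$ is well defined even though only $n$ messages are physically transmitted.
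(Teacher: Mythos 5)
Your proof is correct and follows essentially the same route as the paper: both rest on the unbiasedness of uniform client sampling ($\mathbb{E}_{\mathcal{S}_k}[x_{k+1}]=\hat{x}_{k+1}$) and of the quantizer ($\mathbb{E}_{Q}[\hat{x}_{k+1}]=\overline{x}_{k+1}$), combined with the tower property over the nested conditioning. The only cosmetic difference is that you apply the two-term bias--variance identity twice in sequence, whereas the paper expands the three-term square at once and shows each cross term vanishes --- these are the same computation organized differently.
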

\begin{proof}
First, we calculate the conditional expectation of $x_{k + 1}$ with respect to the random selection of clients as follows:
\begin{equation}\label{lm4eq1}
\begin{aligned}
\mathbb{E}_{\mathcal{S}_k}[ x_{k + 1} | (R_{k},Q,\mathcal{F}_k) ]
& = x_k + \mathbb{E}_{\mathcal{S}_k}\big[ \frac{1}{n}\sum_{i \in \mathcal{S}_k}Q(x_{k + 1}^{(i)} - x_k)| (R_{k},Q,\mathcal{F}_k) \big] \\
& = x_k + \frac{1}{\binom{N}{n}}\frac{1}{n}\binom{N - 1}{n - 1}\sum_{i = 1}^NQ(x_{k + 1}^{(i)} - x_k) \\
& = x_k + \frac{1}{N}\sum_{i = 1}^NQ(x_{k + 1}^{(i)} - x_k)\\
& = \hat{x}_{k + 1},
\end{aligned} 
\end{equation}
where $\binom{N}{n}$ is binomial coefficient defined by $\frac{N(N - 1)\cdots(N - n + 1)}{n(n - 1)\cdots 1}$. The expression $\mathbb{E}_{\mathcal{S}_k}[ \cdot | (R_{k},Q,\mathcal{F}_k)]$ is the expectation for selecting clients conditioned on local stochastic gradient update, quantization and $\mathcal{F}_k$. 
The equation {(\ref{lm4eq1})} states that $x_{k + 1}$ is an unbiased estimate of $\hat{x}_{k + 1}$. \\
Furthermore, by using the unbiasedness of quantization, one gets
\begin{equation}\label{lm4eq2}
\begin{aligned}
\mathbb{E}_{Q}[ \hat{x}_{k + 1}| (R_{k},\mathcal{F}_k) ]
& = x_k + \mathbb{E}_{Q}\big[ \frac{1}{N}\sum_{i = 1}Q(x_{k + 1}^{(i)} - x_k) \big| (R_{k},\mathcal{F}_k) \big] 
= \frac{1}{N}\sum_{i = 1}x_{k + 1}^{(i)} 
 = \overline{x}_{k + 1}.
\end{aligned} 
\end{equation}
Here $\mathbb{E}_{Q}[ \cdot| (R_{k},\mathcal{F}_k) ]$ is the expectation with respect to quantization conditioned on local stochastic gradient update and $\mathcal{F}_k$. \\ 
Combine {(\ref{lm4eq1})} and {(\ref{lm4eq2})}, we have
\begin{equation}\label{lm4eq3}
\begin{aligned}
& \mathbb{E}_{Q}\mathbb{E}_{\mathcal{S}_k}[ \lVert x_{k + 1} - x^*\rVert_2^2 | (R_{k},\mathcal{F}_k) ] \\ 
 & = \mathbb{E}_{Q}\mathbb{E}_{\mathcal{S}_k}[ \lVert x_{k + 1} - \hat{x}_{k + 1} + \hat{x}_{k + 1} - \overline{x}_{k + 1} + \overline{x}_{k + 1} - x^*\rVert_2^2 | (R_{k},\mathcal{F}_k)  ] \\
 & =  \mathbb{E}_{Q}\mathbb{E}_{\mathcal{S}_k}[ \lVert x_{k + 1} - \hat{x}_{k + 1} \rVert_2^2 | (R_{k},\mathcal{F}_k)  ] 
 + \mathbb{E}_{Q}\mathbb{E}_{\mathcal{S}_k}[ \lVert \hat{x}_{k + 1} - \overline{x}_{k + 1}\rVert_2^2 |(R_{k},\mathcal{F}_k)  ] \\
 & \qquad + \mathbb{E}_{Q}\mathbb{E}_{\mathcal{S}_k}[ \lVert \overline{x}_{k + 1} - x^*\rVert_2^2 | (R_{k},\mathcal{F}_k)  ]  + 2\mathbb{E}_{Q}\mathbb{E}_{\mathcal{S}_k}[ \langle x_{k + 1} - \hat{x}_{k + 1}, \hat{x}_{k + 1} - \overline{x}_{k + 1}\rangle | (R_{k},\mathcal{F}_k)  ] \\
 & \qquad + 2\mathbb{E}_{Q}\mathbb{E}_{\mathcal{S}_k}[ \langle x_{k + 1} - \hat{x}_{k + 1}, \overline{x}_{k + 1} - x^*\rangle | (R_{k},\mathcal{F}_k) ] + 2\mathbb{E}_{Q}\mathbb{E}_{\mathcal{S}_k}[ \langle \hat{x}_{k + 1} - \overline{x}_{k + 1}, \overline{x}_{k + 1} - x^*\rangle | (R_{k},\mathcal{F}_k) ] \\
& = \mathbb{E}_{Q}\mathbb{E}_{\mathcal{S}_k}[ \lVert x_{k + 1} - \hat{x}_{k + 1} \rVert_2^2 |(R_{k},\mathcal{F}_k) ]
 + \mathbb{E}_{Q}\mathbb{E}_{\mathcal{S}_k}[ \lVert \hat{x}_{k + 1} - \overline{x}_{k + 1}\rVert_2^2 | (R_{k},\mathcal{F}_k)  ]  + \mathbb{E}_{Q}\mathbb{E}_{\mathcal{S}_k}[ \lVert \overline{x}_{k + 1} - x^*\rVert_2^2 | (R_{k},\mathcal{F}_k)  ]  \\
& \qquad  + 2\mathbb{E}_{Q}( \mathbb{E}_{\mathcal{S}_k}[ \langle \underline{x_{k + 1} - \hat{x}_{k + 1}}, \hat{x}_{k + 1} - \overline{x}_{k + 1}\rangle | (R_{k},Q,\mathcal{F}_k))| (R_{k},\mathcal{F}_k)  ] \\
 & \qquad + 2\mathbb{E}_{Q}(\mathbb{E}_{\mathcal{S}_k}[ \langle \underline{x_{k + 1} - \hat{x}_{k + 1}}, \overline{x}_{k + 1} - x^*\rangle | (R_{k},Q,\mathcal{F}_k))| (R_{k},\mathcal{F}_k) ] \\
 & \qquad + 2\mathbb{E}_{\mathcal{S}_k}\mathbb{E}_{Q}[ \langle \underline{\hat{x}_{k + 1} - \overline{x}_{k + 1}}, \overline{x}_{k + 1} - x^*\rangle | (R_{k},\mathcal{F}_k)] \\
& =  \mathbb{E}_{Q}\mathbb{E}_{\mathcal{S}_k}[ \lVert x_{k + 1} - \hat{x}_{k + 1} \rVert_2^2 | (R_{k},\mathcal{F}_k) ]
 + \mathbb{E}_{Q}\mathbb{E}_{\mathcal{S}_k}[ \lVert \hat{x}_{k + 1} - \overline{x}_{k + 1}\rVert_2^2 | (R_{k},\mathcal{F}_k) ] \\
& \qquad + \mathbb{E}_{Q}\mathbb{E}_{\mathcal{S}_k}[ \lVert \overline{x}_{k + 1} - x^*\rVert_2^2 | (R_{k},\mathcal{F}_k) ],
 \end{aligned}
\end{equation}
where the third equality uses the tower property of conditional expectation and independence of quantization and random selection of clients. \\
Taking expectation $\mathbb{E}_{R_{k}}[\cdot]$ of both sides of {(\ref{lm4eq3})},  
\begin{equation}\label{lm4eq31}
\begin{aligned}
& \mathbb{E}_{k + 1}[ \lVert x_{k + 1} - x^*\rVert_2^2 | \mathcal{F}_{k}] \\
& = \mathbb{E}_{R_{k}}[ \mathbb{E}_{Q}\mathbb{E}_{\mathcal{S}_k}[\lVert x_{k + 1} - x^*\rVert_2^2 | \mathcal{F}_{k} ]] \\
 & =  \mathbb{E}_{k + 1}[\lVert x_{k + 1} - \hat{x}_{k + 1} \rVert_2^2 | \mathcal{F}_{k}] 
 + \mathbb{E}_{k + 1}[\lVert \hat{x}_{k + 1} - \overline{x}_{k + 1}\rVert_2^2 | \mathcal{F}_{k}]  
 + \mathbb{E}_{k + 1}[\lVert \overline{x}_{k + 1} - x^*\rVert_2^2 | \mathcal{F}_{k}] , 
 \end{aligned}
\end{equation} 
where the above equalities follow from the fact $\mathbb{E}_{k + 1}[\cdot| \mathcal{F}_{k}] = \mathbb{E}_{R_{k}}\mathbb{E}_{Q}\mathbb{E}_{\mathcal{S}_k}[ \cdot | \mathcal{F}_{k} ]$. Then the Eq. {(\ref{lm4eq01})} follows by taking expectation $\mathbb{E}_{k}(\cdot)$ of both sides of Eq. {(\ref{lm4eq31})}. \\
Analogously, we can get  
\begin{equation}\label{lm4eq4}
\begin{aligned}
\mathbb{E}_{\mathcal{S}_k}[ v_{k + 1}| (R_{k},Q,\mathcal{F}_k) ]
& = \mathbb{E}_{\mathcal{S}_k}\big[ \frac{1}{n}\sum_{i \in \mathcal{S}_k}Q(v_k^{(i)}) \big| (R_{k},Q,\mathcal{F}_k) \big] 
 = \frac{1}{N}\sum_{i = 1}^NQ(v_k^{(i)}) 
  = \hat{v}_{k + 1},
\end{aligned} 
\end{equation}
and 
\begin{equation}\label{lm4eq5}
\begin{aligned}
\mathbb{E}_{Q}[ \hat{v}_{k + 1} | (R_{k},\mathcal{F}_k) ]
& = \mathbb{E}_{Q}\big[ \frac{1}{N}\sum_{i = 1}Q(v_k^{(i)}) | (R_{k},\mathcal{F}_k) \big] 
 = \frac{1}{N}\sum_{i = 1}v_{k + 1}^{(i)} 
 = \overline{v}_{k + 1},
\end{aligned} 
\end{equation}
which results in
\begin{equation}\label{lm4eq6}
\begin{aligned}
& \mathbb{E}_{Q}\mathbb{E}_{\mathcal{S}_k}[ \lVert v_{k + 1} - v^*\rVert_2^2 | (R_{k},\mathcal{F}_k) ] \\
& = \mathbb{E}_{Q}\mathbb{E}_{\mathcal{S}_k}[ \lVert v_{k + 1} - \hat{v}_{k + 1} + \hat{v}_{k + 1} - \overline{v}_{k + 1} + \overline{v}_{k + 1} - v^*\rVert_2^2 | (R_{k},\mathcal{F}_k) ] \\
& = \mathbb{E}_{Q}\mathbb{E}_{\mathcal{S}_k}[ \lVert v_{k + 1} - \hat{v}_{k + 1} \rVert_2^2 | (R_{k},\mathcal{F}_k) ]
 + \mathbb{E}_{Q}\mathbb{E}_{\mathcal{S}_k}[ \lVert \hat{v}_{k + 1} - \overline{v}_{k + 1}\rVert_2^2 | (R_{k},\mathcal{F}_k) ] + \mathbb{E}_{Q}\mathbb{E}_{\mathcal{S}_k}[ \lVert \overline{v}_{k + 1} - v^*\rVert_2^2 | (R_{k},\mathcal{F}_k) ] . 
 \end{aligned}
\end{equation} 
Again the Eq. {(\ref{lm4eq02})} follows by taking expectation $\mathbb{E}_{R_{k}}[\cdot]$ and $\mathbb{E}_{k}[\cdot]$ successively of both sides of {(\ref{lm4eq6})}.
\end{proof}

\begin{lemma}\label{lm5}
Suppose Assumptions {\ref{smooth}}, {\ref{Quantizer}} and {\ref{fact1}} hold, then the following inequalities hold
\begin{equation}\label{lm5eq01}
 \mathbb{E}_{k + 1}[ \lVert x_{k + 1} - \hat{x}_{k + 1}  \rVert_2^2 + \tfrac{\gamma_{k + 1}^2}{\lambda} \lVert v_{k + 1} - \hat{v}_{k + 1}  \rVert_2^2 ]  \\
\leq \gamma_k^2C_1 \mathbb{E}_{k}[ \lVert x_k - x^*\rVert_2^2 ] + \gamma_k ^2C_3,
\end{equation}
and 
\begin{equation}\label{lm5eq02}
 \mathbb{E}_{k + 1}[ \lVert \lVert \hat{x}_{k + 1} - \overline{x}_{k + 1}   \rVert_2^2 + \tfrac{\gamma_{k + 1}^2}{\lambda}\lVert \hat{v}_{k + 1} - \overline{v}_{k + 1}  \rVert_2^2 ]  \\
\leq \gamma_k^2C_2 \mathbb{E}_{k}[ \lVert x_k - x^*\rVert_2^2 ] + \gamma_k ^2C_4,
\end{equation}
where $C_1 = \frac{32(1 + q)(N - n)}{n(N - 1)\beta^2}$, $C_2 = \frac{8(1 + q)}{\beta^2}$. Let $C_0 = 4\sigma^2 + 8\delta^2 + 2\rho_{\max}(BB^T)(2M^2+ \frac{M^2}{\lambda})$ and the constant $M$ is the upper bound of the dual variable in Assumption \ref{fact1}, then 
$C_3 = \frac{4(1 + q)(N - n)}{n(N - 1)}C_0$, $C_4 = (1 + q)C_0$. 

\end{lemma}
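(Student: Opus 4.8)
The two estimates isolate the last two of the three independent noise sources identified before Lemma \ref{lm4}: the quantization error (\ref{lm5eq02}) and the client-sampling error (\ref{lm5eq01}). In both cases the plan is to condition on everything except the relevant source, reduce the averaged quantity to a sum of per-client contributions, and then bound each contribution $\lVert x_{k+1}^{(i)} - x_k\rVert_2^2$ and $\lVert v_{k+1}^{(i)}\rVert_2^2$ by $\gamma_k^2$ times a term proportional to $\lVert x_k - x^*\rVert_2^2$ plus a constant. The factor $\gamma_k^2$ on the right-hand sides is therefore not incidental: it comes directly from the fact that $x_{k+1}^{(i)} - x_k = -\gamma_k(\tilde\nabla f^{(i)}(x_k) + B^T v_{k+1}^{(i)})$.

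For (\ref{lm5eq02}) I would start from $\hat x_{k+1} - \overline x_{k+1} = \frac1N\sum_{i=1}^N\big(Q(x_{k+1}^{(i)} - x_k) - (x_{k+1}^{(i)} - x_k)\big)$ and the analogous identity for the dual part. Since the quantizer is unbiased and acts independently on each client (Assumption \ref{Quantizer}), the summands are independent and mean-zero, so all cross terms vanish under $\mathbb{E}_Q$ and I am left with $\mathbb{E}_Q\lVert\hat x_{k+1}-\overline x_{k+1}\rVert_2^2 \le \frac{q}{N^2}\sum_i\lVert x_{k+1}^{(i)}-x_k\rVert_2^2$, together with $\tfrac{\gamma_{k+1}^2}{\lambda}\cdot\frac{q}{N^2}\sum_i\lVert v_{k+1}^{(i)}\rVert_2^2$ for the dual part. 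Bounding $q$ by $1+q$ and using $\gamma_{k+1}\le\gamma_k$ then produces the constants $C_2,C_4$ once the per-client estimate below is inserted.

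For (\ref{lm5eq01}) the randomness is the uniform choice of $\mathcal{S}_k$. Writing $w_i = Q(x_{k+1}^{(i)} - x_k)$ and $\overline w = \frac1N\sum_i w_i$, the quantity $x_{k+1}-\hat x_{k+1} = \frac1n\sum_{i\in\mathcal{S}_k}w_i - \overline w$ is exactly the centered sample mean of a sampling-without-replacement scheme, so $\mathbb{E}_{\mathcal{S}_k}\lVert x_{k+1}-\hat x_{k+1}\rVert_2^2 = \frac{N-n}{n(N-1)}\cdot\frac1N\sum_i\lVert w_i-\overline w\rVert_2^2 \le \frac{N-n}{n(N-1)}\cdot\frac1N\sum_i\lVert w_i\rVert_2^2$; this is the origin of the combinatorial prefactor $\frac{N-n}{n(N-1)}$ appearing in $C_1,C_3$. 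Taking $\mathbb{E}_Q$ and using $\mathbb{E}_Q\lVert Q(y)\rVert_2^2\le(1+q)\lVert y\rVert_2^2$ yields the factor $1+q$ and reduces everything, as before, to the per-client quantities $\lVert x_{k+1}^{(i)}-x_k\rVert_2^2$ and $\lVert v_{k+1}^{(i)}\rVert_2^2$.

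The crux --- and the step I expect to be most delicate --- is the per-client bound. Using the update and the optimality relation $\nabla f(x^*) + B^T v^* = 0$ extracted from Lemma \ref{lm1}, I would write $\tilde\nabla f^{(i)}(x_k)+B^Tv_{k+1}^{(i)}$ as the sum of $\tilde\nabla f^{(i)}(x_k)-\nabla f^{(i)}(x_k)$, $\nabla f^{(i)}(x_k)-\nabla f(x_k)$, $\nabla f(x_k)-\nabla f(x^*)$, and $B^T(v_{k+1}^{(i)}-v^*)$, then apply a Young / Cauchy--Schwarz splitting. The four pieces are controlled respectively by $\sigma^2$ (Assumption \ref{VR1}), $\delta^2$ (Assumption \ref{VR2}), $\tfrac1{\beta^2}\lVert x_k-x^*\rVert_2^2$ (Lipschitz gradient, Assumption \ref{smooth}), and $\rho_{\max}(BB^T)M^2$; the dual Lyapunov term $\tfrac{\gamma_{k+1}^2}{\lambda}\lVert v_{k+1}^{(i)}\rVert_2^2$ supplies the remaining $\tfrac{M^2}{\lambda}$ contribution to $C_0$. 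Here Assumption \ref{fact1} is indispensable: the dual iterates $v_{k+1}^{(i)}$ are not otherwise tied to $\lVert x_k-x^*\rVert_2^2$, and without a uniform bound $M$ the terms $B^T(v_{k+1}^{(i)}-v^*)$ and $\lVert v_{k+1}^{(i)}\rVert_2^2$ could not be absorbed into a constant. Collecting the coefficient of $\lVert x_k-x^*\rVert_2^2$ gives $C_1,C_2$ and the remaining constants give $C_0$, hence $C_3,C_4$; the only genuine bookkeeping is to keep $\gamma_{k+1}\le\gamma_k$ so that the $(k+1)$-indexed step sizes on the left can be replaced by $\gamma_k^2$ on the right.
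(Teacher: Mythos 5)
Your proposal follows essentially the same route as the paper's proof: the same sampling-without-replacement variance identity for the client-selection error, the same reduction of the quantization error to per-client second moments, and the same four-way splitting of $\tilde{\nabla} f^{(i)}(x_k)+B^Tv_{k+1}^{(i)}$ via the optimality relation $\nabla f(x^*)+B^Tv^*=0$, with Assumption~\ref{fact1} absorbing the dual terms into $C_0$. Your two refinements --- cancelling the mean-zero quantization cross terms across clients rather than invoking Jensen, and using the centered-sum identity $\sum_i\lVert w_i-\overline{w}\rVert_2^2\le\sum_i\lVert w_i\rVert_2^2$ rather than the cruder $\lVert a+b\rVert_2^2\le 2\lVert a\rVert_2^2+2\lVert b\rVert_2^2$ --- only tighten the constants relative to the paper's derivation, so the stated $C_1,\dots,C_4$ follow a fortiori.
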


\begin{proof}
{We first prove {(\ref{lm5eq01})}.} 
Let $z_{k + 1}^{(i)} = Q(x_{k + 1}^{(i)} - x_k)$ and $\overline{z}_{k + 1} = \frac{1}{N}\sum_{i = 1}^Nz_{k + 1}^{(i)}$, then
\begin{equation}\label{lm5eq1}
\begin{aligned}
& \mathbb{E}_{\mathcal{S}_k}[ \lVert x_{k + 1} - \hat{x}_{k + 1}  \rVert_2^2 | (R_{k},Q,\mathcal{F}_k) ] \\
& = \mathbb{E}_{\mathcal{S}_k}\big[ \big\lVert \frac{1}{n}\sum_{i \in \mathcal{S}_k} z_{k + 1}^{(i)} - \overline{z}_{k + 1}  \big\Vert_2^2  \big| (R_{k},Q,\mathcal{F}_k) \big] \\
& =  \frac{1}{n^2}\mathbb{E}_{\mathcal{S}_k}\big[ \big\lVert \sum_{i = 1}^N \mathbb{I}(i \in \mathcal{S}_k)(z_{k + 1}^{(i)} - \overline{z}_{k + 1} )\big\rVert_2^2  \big| (R_{k},Q,\mathcal{F}_k)   \big] \\
& =  \frac{1}{n^2}\big \{ \sum_{i = 1}^N \mathrm{Pr}(i \in \mathcal{S}_k) \lVert z_{k + 1}^{(i)} - \overline{z}_{k + 1} \rVert_2^2 \}
+ \sum_{i \not = j}\mathrm{Pr}(i,j\in \mathcal{S}_k) \langle z_{k + 1}^{(i)} - \overline{z}_{k + 1},z_{k + 1}^{(j)} - \overline{z}_{k + 1} \rangle) \\
& =  \frac{1}{nN} \sum_{i = 1}^N \lVert z_{k + 1}^{(i)} - \overline{z}_{k + 1} \rVert_2^2 
 + \frac{n - 1}{nN(N - 1)}\sum_{i \not = j}\langle z_{k + 1}^{(i)} - \overline{z}_{k + 1},z_{k + 1}^{(j)} - \overline{z}_{k + 1} \rangle) \\
& =  \frac{N - n}{nN(N - 1)} \sum_{i = 1}^N \lVert z_{k + 1}^{(i)} - \overline{z}_{k + 1} \rVert_2^2, \\
\end{aligned}
\end{equation}
where $\mathbb{I}(\cdot)$ is the indicator function that equals $1$ when the event occurs and $0$ otherwise. The last equality follows from the fact that $\sum_{i = 1}^N \lVert z_{k + 1}^{(i)} - \overline{z}_{k + 1} \rVert_2^2 + \sum_{i \not = j}\langle z_{k + 1}^{(i)} - \overline{z}_{k + 1},z_{k + 1}^{(j)} - \overline{z}_{k + 1} \rangle) = 0$. 
Taking conditional expectation $\mathbb{E}_{Q}[\cdot | (R_{k},\mathcal{F}_k) ]$ on the summation of the last equality of {(\ref{lm5eq1})}, we have
\begin{equation}\label{lm5eq2}
\begin{aligned}
\sum_{i = 1}^N \mathbb{E}_Q [ \lVert z_{k + 1}^{(i)} - \overline{z}_{k + 1} \rVert_2^2 | (R_{k},\mathcal{F}_k) ] 
& \overset{\tiny{\circled{1}}}{\leq} 2\sum_{i  = 1}^N\mathbb{E}_Q [ \lVert z_{k + 1}^{(i)} \rVert_2^2 | (R_{k},\mathcal{F}_k)] + 2N \mathbb{E}_Q[\lVert \overline{z}_{k + 1} \rVert_2^2 | (R_{k},\mathcal{F}_k) ] \\
& \overset{\tiny{\circled{2}}}{\leq} 4\sum_{i  = 1}^N\mathbb{E}_Q[ \lVert z_{k + 1}^{(i)} \rVert_2^2 | (R_{k},\mathcal{F}_k) ] \\
& = 4\sum_{i  = 1}^N\mathbb{E}_Q[\lVert Q(x_{k + 1}^{(i)} - x_k) \rVert_2^2 | (R_{k},\mathcal{F}_k) ] \\
& \overset{\tiny{\circled{3}}}{\leq} 4(1 + q)\sum_{i  = 1}^N\lVert x_{k + 1}^{(i)} - x_k \rVert_2^2, 
\end{aligned}
\end{equation}
where 
\begin{itemize}
  \item $\circled{1}$ follows from the inequality $\lVert a + b \rVert_2^2 \leq 2\lVert a \rVert_2^2 + 2\lVert b \rVert_2^2$ for given $a,b$.
  \item $\circled{2}$ uses the convexity of $\lVert \cdot \rVert_2^2$.
  \item $\circled{3}$ uses the fact that, under Assumption {\ref{Quantizer}}, $\mathbb{E}[\lVert Q_s(x)\rVert_2^2 | x] \leq (1 + q) \lVert x \rVert_2^2$. 
\end{itemize}
Combing {(\ref{lm5eq1})} and {(\ref{lm5eq2})}, we finally get  
\begin{equation}\label{lm5eq3}
\begin{aligned}
& \mathbb{E}_{Q}\mathbb{E}_{\mathcal{S}_k}[ \lVert x_{k + 1} - \hat{x}_{k + 1} \rVert_2^2 | (R_{k},\mathcal{F}_k) ]  \\
& = \mathbb{E}_{Q}(\mathbb{E}_{\mathcal{S}_k}[ \lVert x_{k + 1} - \hat{x}_{k + 1} \rVert_2^2 | (R_{k},Q,\mathcal{F}_k)) |(R_{k},\mathcal{F}_k)  ] \\
& \leq \frac{4(1 + q)(N - n)}{nN(N - 1)}\sum_{i  = 1}^N\lVert x_{k + 1}^{(i)} - x_k \rVert_2^2. \\
\end{aligned}
\end{equation}
Similarly, we have 
  \begin{equation}\label{lm5eq4}
  \begin{aligned}
  \mathbb{E}_{\mathcal{S}_k}[ \lVert v_{k + 1} - \hat{v}_{k + 1} \rVert_2^2 | (R_{k},Q,\mathcal{F}_k) ]
  & =  \frac{N - n}{nN(N - 1)} \sum_{i = 1}^N \lVert Q(v_{k + 1}^{(i)}) - \hat{v}_{k + 1} \rVert_2^2, 
  \end{aligned}
  \end{equation}
and 
  \begin{equation}\label{lm5eq5}
  \begin{aligned}
  &\sum_{i = 1}^N \mathbb{E}_Q [\lVert Q(v_{k + 1}^{(i)}) - \hat{v}_{k + 1} \rVert_2^2 | (R_{k},\mathcal{F}_k) ] \\
  & \leq 2\sum_{i  = 1}^N\mathbb{E}_Q [ \lVert Q( v_{k + 1}^{(i)} \rVert_2^2) | (R_{k},\mathcal{F}_k)] + 2N \mathbb{E}_Q(\lVert \hat{v}_{k + 1} \rVert_2^2| (R_{k},\mathcal{F}_k) ] \\
  & \leq 4\sum_{i  = 1}^N\mathbb{E}_Q [ \lVert Q(v_{k + 1}^{(i)}) \rVert_2^2 | (R_{k},\mathcal{F}_k) ] \\
  & \leq 4(1 + q)\sum_{i  = 1}^N\lVert v_{k + 1}^{(i)}\rVert_2^2.
  \end{aligned}
  \end{equation}
  Combing {(\ref{lm5eq4})} and {(\ref{lm5eq5})}, we have 
  \begin{equation}\label{lm5eq6}
  \begin{aligned}
  &\mathbb{E}_{Q}\mathbb{E}_{\mathcal{S}_k}[ \lVert v_{k + 1} - \hat{v}_{k + 1} \rVert_2^2 | (R_{k},\mathcal{F}_k) ] \\
  & = \mathbb{E}_{Q}[\mathbb{E}_{\mathcal{S}_k}[ \lVert v_{k + 1} - \hat{v}_{k + 1} \rVert_2^2 | (R_{k},Q,\mathcal{F}_k)]| (R_{k},\mathcal{F}_k) ] \\ 
  & \leq \frac{4(1 + q)(N - n)}{nN(N - 1)}\sum_{i  = 1}^N\lVert v_{k + 1}^{(i)}\rVert_2^2.
  \end{aligned}
  \end{equation}
  Combing {(\ref{lm5eq3})} and {(\ref{lm5eq6})} yields
  \begin{equation}\label{lm5eq61}
  \begin{aligned}
  & \mathbb{E}_{Q}\mathbb{E}_{\mathcal{S}_k}[ \lVert x_{k + 1} - \hat{x}_{k + 1} \rVert_2^2 + \frac{\gamma_{k + 1}^2}{\lambda}\lVert v_{k + 1} - \hat{v}_{k + 1} \rVert_2^2 | (R_{k},\mathcal{F}_k) ] \\
  & \leq \frac{4(1 + q)(N - n)}{nN(N - 1)}\sum_{i  = 1}^N\lVert x_{k + 1}^{(i)} - x_k \rVert_2^2 + \frac{\gamma_{k + 1}^2}{\lambda}\lVert v_{k + 1}^{(i)}\rVert_2^2.  
  \end{aligned}
  \end{equation}
We are now bounding the right hand side of {(\ref{lm5eq61})}.  \\
For each $i \in \{1,2,\cdots,N\}$
\begin{equation}\label{lm5eq7}
\begin{aligned}
& \mathbb{E}_{R_{k}^{(i)}}[ \lVert x_{k + 1}^{(i)} - x_k \rVert_2^2 | \mathcal{F}_k ] \\
& = \mathbb{E}_{R_{k}^{(i)}}[ \gamma_k ^2 \lVert \tilde{\nabla} f^{(i)}(x_k) - B^Tv_{k + 1}^{(i)}  \rVert_2^2 | \mathcal{F}_k ] \\
& = \gamma_k ^2\mathbb{E}_{R_{k}^{(i)}}[ \lVert \tilde{\nabla} f^{(i)}(x_k) - \nabla f(x^*) + (B^T v_{k + 1}^{(i)} - B^T v^*)  \rVert_2^2 | \mathcal{F}_k ] \\
& \overset{\tiny{\circled{1}}}{\leq} 2\gamma_k ^2\mathbb{E}_{R_{k}^{(i)}}[ \lVert \tilde{\nabla} f^{(i)}(x_k) - \nabla f(x^*)\rVert_2^2 | \mathcal{F}_k )+ 2\gamma_k^2\mathbb{E}_{R_{k}^{(i)}}(\lVert B^T v_{k + 1}^{(i)} - B^T v^* \rVert_2^2 | \mathcal{F}_k] \\
& \overset{\tiny{\circled{2}}}{\leq}  4\gamma_k^2 \mathbb{E}_{R_{k}^{(i)}}[\lVert \tilde{\nabla} f^{(i)}(x_k) - \nabla f^{(i)}(x_k) \rVert_2^2 | \mathcal{F}_k] + 4\gamma_k^2\mathbb{E}_{R_{k}^{(i)}}[\lVert \nabla f^{(i)}(x_k)- \nabla f(x^*)\rVert_2^2| \mathcal{F}_k ] \\
& \qquad + 2\gamma_k^2\rho_{\max}(BB^T) \mathbb{E}_{R_{k}^{(i)}}[ \lVert v_{k + 1}^{(i)} - v^* \rVert_2^2| \mathcal{F}_k ] \\
& \overset{\tiny{\circled{3}}}{\leq}  4\gamma_k^2 \mathbb{E}_{R_{k}^{(i)}}[\lVert \tilde{\nabla} f^{(i)}(x_k) - \nabla f^{(i)}(x_k) \rVert_2^2 | \mathcal{F}_k] + 8\gamma_k^2\mathbb{E}_{R_{k}^{(i)}}[\lVert \nabla f^{(i)}(x_k) - \nabla f(x_k)\rVert_2^2| \mathcal{F}_k] \\
&\qquad + 8\gamma_k^2\mathbb{E}_{R_{k}^{(i)}}[\lVert \nabla f(x_k)- \nabla f(x^*)\rVert_2^2| \mathcal{F}_k] + 2\gamma_k^2\rho_{\max}(BB^T) \mathbb{E}_{R_{k}^{(i)}}[\lVert v_{k + 1}^{(i)} - v^* \rVert_2^2| \mathcal{F}_k] \\
& \overset{\tiny{\circled{4}}}{\leq} 4\gamma_k ^2 \sigma^2 + 8\gamma_k ^2 \delta^2 + 8\gamma_k ^2\lVert \nabla f(x_k)- \nabla f(x^*)\rVert_2^2 + 2\gamma_k^2\rho_{\max}(BB^T)\mathbb{E}_{R_{k}^{(i)}}[\lVert v_{k + 1}^{(i)} - v^* \rVert_2^2 | \mathcal{F}_k]\\
& \overset{\tiny{\circled{5}}}{\leq}  4\gamma_k^2 \sigma^2 + 8\gamma_k ^2 \delta^2 + 8\frac{\gamma_k^2}{\beta^2}\lVert x_k - x^*\rVert_2^2 + 2\gamma_k^2\rho_{\max}(BB^T)\mathbb{E}_{R_{k}^{(i)}}[ \lVert v_{k + 1}^{(i)} - v^* \rVert_2^2| \mathcal{F}_k ],  
\end{aligned} 
\end{equation}
where 
\begin{itemize}
  \item $\circled{1}, \circled{2}, \circled{3}$ uses the inequality $\lVert a + b \rVert_2^2 \leq 2\lVert a \rVert_2^2 + 2\lVert b \rVert_2^2$ for given $a,b$.
  \item $\circled{4}$ uses Assumption {\ref{VR1}} and Assumption {\ref{VR2}}.
  \item $\circled{5}$ uses Assumption {\ref{smooth}}.
\end{itemize}
We then have 
\begin{equation}\label{lm5eq8}
\begin{aligned}
& \mathbb{E}_{R_{k}^{(i)}}[ \lVert x_{k + 1}^{(i)} - x_k \rVert_2^2  +  \tfrac{\gamma_{k + 1}^2}{\lambda}\lVert v_{k + 1}^{(i)}\rVert_2^2 | \mathcal{F}_k ] \\
& \overset{\tiny{\circled{1}}}{\leq} 8\frac{\gamma_k ^2}{\beta^2}\lVert x_k - x^*\rVert_2^2 + \gamma_k ^2 (4\sigma^2 + 8\delta^2 + 2\rho_{\max}(BB^T)\mathbb{E}_{R_{k}^{(i)}}[ \lVert v_{k + 1}^{(i)} - v^* \rVert_2^2 + \tfrac{\lVert v_{k + 1}^{(i)}\rVert_2^2}{\lambda} | \mathcal{F}_k ] \\
& \overset{\tiny{\circled{2}}}{\leq} 8\frac{\gamma_k ^2}{\beta^2}\lVert x_k - x^*\rVert_2^2 + \gamma_k ^2 (4\sigma^2 + 8\delta^2 + 2\rho_{\max}(BB^T)(2M^2+ \frac{M^2}{\lambda}))  \\
& \overset{\tiny{\circled{3}}}{=} 8\frac{\gamma_k ^2}{\beta^2}\lVert x_k - x^*\rVert_2^2 + \gamma_k ^2 C_0,\\
\end{aligned}
\end{equation}
where 
\begin{itemize}
  \item $\circled{1}$ follows from the Eq. {(\ref{lm5eq7})} and the decreasing property of $\gamma_k$.
  \item $\circled{2}$ use the boundness of $\lVert v_{k + 1}^{(i)}\rVert_2^2$ in Assumption \ref{fact1}.
  \item $\circled{3}$ use the definition $C_0 = 4\sigma^2 + 8\delta^2 + 2\rho_{\max}(BB^T)(2M^2+ \frac{M^2}{\lambda})$.
\end{itemize}
Combine {(\ref{lm5eq3})}, {(\ref{lm5eq6})} and {(\ref{lm5eq8})}, we then have  
\begin{equation}\label{lm5eq81}
\begin{aligned}
 & \mathbb{E}_{k + 1}[ \lVert x_{k + 1} - \hat{x}_{k + 1}  \rVert_2^2 + \tfrac{\gamma_{k + 1}^2}{\lambda}\lVert v_{k + 1} - \hat{v}_{k + 1}  \rVert_2^2 ] \\
 & \overset{\tiny{\circled{1}}}{=}  \mathbb{E}_{k}\Big[\mathbb{E}_{R_{k}}\big[ \mathbb{E}_{Q}\mathbb{E}_{\mathcal{S}_k}[\lVert x_{k + 1} - \hat{x}_{k + 1}  \rVert_2^2 + \tfrac{\gamma_{k + 1}^2}{\lambda}\lVert v_{k + 1} - \hat{v}_{k + 1}  \rVert_2^2 | (\mathcal{R}_k,\mathcal{F}_k) ] | \mathcal{F}_k \big]\Big]  \\
&  \overset{\tiny{\circled{2}}}{\leq}  \frac{4(1 + q)(N - n)}{nN(N - 1)}\mathbb{E}_{k}\Big[\mathbb{E}_{R_{k}} \big[ \sum_{i = 1}^N \lVert x_{k + 1}^{(i)} - x_k \rVert_2^2  +  \tfrac{\gamma_{k + 1}^2}{\lambda}\lVert v_{k + 1}^{(i)}\rVert_2^2 | \mathcal{F}_k \big]\Big] \\
& \overset{\tiny{\circled{3}}}{=}  \frac{4(1 + q)(N - n)}{nN(N - 1)}\mathbb{E}_{k}\Big[\sum_{i = 1}^N \mathbb{E}_{R_{k}}^{(i)}  \big[\lVert x_{k + 1}^{(i)} - x_k \rVert_2^2  +  \tfrac{\gamma_{k + 1}^2}{\lambda}\lVert v_{k + 1}^{(i)}\rVert_2^2 \big | \mathcal{F}_k \big]\Big] \\
& \overset{\tiny{\circled{4}}}{\leq}  \frac{4(1 + q)(N - n)}{n(N - 1)}(8\tfrac{\gamma_k ^2}{\beta^2}\mathbb{E}_{k}[ \lVert x_k - x^*\rVert_2^2)] + \gamma_k ^2C_0)  \\
& \overset{\tiny{\circled{5}}}{=}  \gamma_k^2C_1\mathbb{E}_{k}[ \lVert x_k - x^*\rVert_2^2 ] + \gamma_k ^2C_3,
\end{aligned}
\end{equation}
where
\begin{itemize}
  \item $\circled{1}$ follows tower property.
  \item $\circled{2}$ uses Eq. {(\ref{lm5eq61})}.
  \item $\circled{3}$ uses the definition of $\mathbb{E}_{R_{k}} $ and the independence of stochastic gradient updates across the clients.
  \item $\circled{4}$ uses Eq. {(\ref{lm5eq8})}.
  \item $\circled{5}$ uses the definition of $C_1$ and $C_3$.
\end{itemize}

\vskip1mm

Analogously, the {(\ref{lm5eq02})} can be proved as follows:
  \begin{equation}\label{lm5eq9}
  \begin{aligned}
  \mathbb{E}_{Q}[ \lVert \hat{x}_{k + 1} - \overline{x}_{k + 1} \rVert_2^2 | (R_{k},\mathcal{F}_k) ] 
  & = \mathbb{E}_{Q} \big[ \big \lVert x_k + \frac{1}{N}\sum_{i = 1}^N Q(x_{k + 1}^{(i)} - x_k) - \frac{1}{N}\sum_{i = 1}^N x_{k + 1}^{(i)} \big \rVert_2^2 \big | (R_{k},\mathcal{F}_k) \big]  \\
  & = \mathbb{E}_{Q} \big[ \big \lVert \frac{1}{N}\sum_{i = 1}^N  Q(x_{k + 1}^{(i)} - x_k) - (x_{k + 1}^{(i)} - x_k) \big \rVert_2^2 \big| (R_{k},\mathcal{F}_k) \big] \\
  & \leq \frac{1}{N} \mathbb{E}_{Q} \big[ \sum_{i = 1}^N \lVert Q(x_{k + 1}^{(i)} - x_k) - (x_{k + 1}^{(i)} - x_k) \rVert_2^2 \big| (R_{k},\mathcal{F}_k)\big] \\
  & \leq \frac{1}{N} \sum_{i = 1}^N \mathbb{E}_{Q} \big[\lVert Q(x_{k + 1}^{(i)} - x_k) \rVert_2^2 \big| (R_{k},\mathcal{F}_k)\big]  \\
  & \leq \frac{1 + q}{N} \sum_{i = 1}^N \lVert x_{k + 1}^{(i)} - x_k \rVert_2^2,
  \end{aligned}
  \end{equation}
where the last two inequalities follows from Assumption {\ref{Quantizer}}.  \\
And $\mathbb{E}_{Q}[\lVert \hat{v}_{k + 1} - \overline{v}_{k + 1} \rVert_2^2 | (R_{k},\mathcal{F}_k) ]$ is bounded by 
  \begin{equation}\label{lm5eq10}
  \begin{aligned}
  &\mathbb{E}_{Q}[\lVert \hat{v}_{k + 1} - \overline{v}_{k + 1} \rVert_2^2 | (R_{k},\mathcal{F}_k)) ] \\
  & = \mathbb{E}_{Q} \big[ \big \lVert \frac{1}{N}\sum_{i = 1}^N Q(v_{k + 1}^{(i)}) - \frac{1}{N}\sum_{i = 1}^N v_{k + 1}^{(i)} \big \rVert_2^2 \big| (R_{k},\mathcal{F}_k)) \big]  \\
  & = \mathbb{E}_{Q} \big[ \big \lVert \frac{1}{N}\sum_{i = 1}^N Q(v_{k + 1}^{(i)}) - v_{k + 1}^{(i)} \big \rVert_2^2 | (R_{k},\mathcal{F}_k)) \big]  \\
  & \leq  \frac{1}{N}\sum_{i = 1}^N  \mathbb{E}_{Q} \big[ \big \lVert Q(v_{k + 1}^{(i)}) - v_{k + 1}^{(i)} \big \rVert_2^2 \big| (R_{k},\mathcal{F}_k)) \big] \\
  & \leq \frac{1 + q}{N} \sum_{i = 1}^N \lVert v_{k + 1}^{(i)} \rVert_2^2.   
  \end{aligned}
  \end{equation}
Then 
\begin{equation}\label{lm5eq11}
\begin{aligned}
& \mathbb{E}_{k + 1}[ \lVert \hat{x}_{k + 1} - \overline{x}_{k + 1}  \rVert_2^2 + \tfrac{\gamma_{k + 1}^2}{\lambda}\lVert \hat{v}_{k + 1} - \overline{v}_{k + 1}  \rVert_2^2 ]  \\
 & = \mathbb{E}_{k}\Big[\mathbb{E}_{R_{k}}\big[ \mathbb{E}_{Q}[\lVert \hat{x}_{k + 1} - \overline{x}_{k + 1}  \rVert_2^2 + \tfrac{\gamma_{k + 1}^2}{\lambda}\lVert \hat{v}_{k + 1} - \overline{v}_{k + 1}  \rVert_2^2 | (\mathcal{R}_k,\mathcal{F}_k) ] | \mathcal{F}_k \big] \Big]  \\
& \overset{\tiny{\circled{1}}}{\leq} \frac{1 + q}{N}\mathbb{E}_{k}\Big[ \mathbb{E}_{R_{k}} \big[ \sum_{i = 1}^N\lVert x_{k + 1}^{(i)} - x_k \rVert_2^2  +  \tfrac{\gamma_{k + 1}^2}{\lambda}\lVert v_{k + 1}^{(i)}\rVert_2^2 | \mathcal{F}_k \big]\Big] \\
& = \frac{1 + q}{N}\mathbb{E}_{k}\Big[ \sum_{i = 1}^N\mathbb{E}_{R_{k}}^{(i)} \big [\lVert x_{k + 1}^{(i)} - x_k \rVert_2^2)  +  \tfrac{\gamma_{k + 1}^2}{\lambda}\lVert v_{k + 1}^{(i)}\rVert_2^2 \big | \mathcal{F}_k \big] \Big] \\
& \overset{\tiny{\circled{2}}}{\leq} (1 + q)(8\tfrac{\gamma_k ^2}{\beta^2}\mathbb{E}_{k}[ \lVert x_k - x^*\rVert_2^2) + \gamma_k ^2C_0 ] \\
& = \frac{8(1 + q)\gamma_k ^2}{\beta^2}\mathbb{E}_{k}[ \lVert x_k - x^*\rVert_2^2 ] + \gamma_k ^2(1 + q)C_0 \\
& \overset{\tiny{\circled{3}}}{=} C_2\gamma_k^2\mathbb{E}_{k}[ \lVert x_k - x^*\rVert_2^2 ] + \gamma_k ^2C_4,
\end{aligned}
\end{equation}
where 
\begin{itemize}
   \item $\circled{1}$ follows from {(\ref{lm5eq9})} and {(\ref{lm5eq10})}.
   \item $\circled{2}$ uses {(\ref{lm5eq8})}.
   \item $\circled{2}$ uses the definition of $C_2$ and $C_4$.
 \end{itemize} 
This completes the proof. \\ 
\end{proof}

\subsection{Appendix C}
We are now proving the Lemma \ref{lm6}
\begin{proof}[{Proof of Lemma {\ref{lm6}}}]
Recall Eq. {(\ref{lm2eq0})},  we have
\begin{equation}\label{lm6eq1}
\begin{aligned}
& \mathbb{E}_{R_k}[ \lVert \overline{x}_{k + 1} - x^* \rVert_2^2 + \tfrac{\gamma_{k + 1}^2}{\lambda}\lVert \overline{v}_{k + 1}  - v^* \rVert_2^2/\mathcal{F}_k ] \\
& \leq  (1 + \tfrac{3\gamma_k^2}{\beta^2})\lVert x_k - x^* \rVert_2^2 + \tfrac{\gamma_{k}^2}{\lambda} \lVert v_k - v^* \rVert_M^2
 - 2\gamma_k \big< \nabla f(x_k) - \nabla f(x^*), x_k - x^*\big> + \gamma_k^2(\delta^2 + \sigma^2)\\
 & \leq  (1 + \tfrac{3\gamma_k^2}{\beta^2}\lVert x_k - x^* \rVert_2^2 + \tfrac{\gamma_{k}^2}{\lambda}(1 -  \lambda\rho_{min}(BB^T)) \lVert v_k - v^* \rVert_2^2
 - 2\gamma_k \big< \nabla f(x_k) - \nabla f(x^*), x_k - x^*\big> \\
 & \qquad + \gamma_k^2(\delta^2 + \sigma^2)\\
  & \leq  (1 - 2\gamma_k \mu + \tfrac{3\gamma_k^2}{\beta^2})
  \lVert x_k - x^* \rVert_2^2 + \tfrac{\gamma_{k}^2}{\lambda}(1 - \lambda\rho_{min}(BB^T)) \lVert v_k - v^* \rVert_2^2+ \gamma_k^2(\delta^2 + \sigma^2), \\
\end{aligned}  
\end{equation}
where the last inequality uses the strongly convexity of $f(x)$. The $\rho_{min}(BB^T))$ denotes the minimum eigenvalue of matrix $BB^T$. \\
Taking expectation $\mathbb{E}_{k}[\cdot]$ on both sides of {(\ref{lm6eq1})} yields
 \begin{equation}\label{lm6eq2}
\begin{aligned}
& \mathbb{E}_{k + 1}[\lVert \overline{x}_{k + 1} - x^* \rVert_2^2 + \tfrac{\gamma_{k + 1}^2}{\lambda}\lVert \overline{v}_{k + 1}  - v^* \rVert_2^2 ] \\
& = \mathbb{E}_{k}[\lVert \overline{x}_{k + 1} - x^* \rVert_2^2 + \tfrac{\gamma_{k + 1}^2}{\lambda}\lVert \overline{v}_{k + 1}  - v^* \rVert_2^2 ] \\
& \leq (1 - 2\gamma_k \mu + \tfrac{3\gamma_k^2}{\beta^2})\mathbb{E}_{k}[ \lVert x_k - x^* \rVert_2^2 ] + \tfrac{\gamma_{k}^2}{\lambda}(1 -  \lambda\rho_{min}(BB^T)) \mathbb{E}_{k}[ \lVert v_k - v^* \rVert_2^2 ] + \gamma_k^2(\delta^2 + \sigma^2),\\
\end{aligned}  
\end{equation}
where the first equality follows from the fact that the randomness of partial participation and quantization dose not exits in  $\overline{x}_{k + 1}$ and $\overline{v}_{k + 1}$. \\
Using Lemma {\ref{lm4}}, one has 
\begin{equation}\label{lm6eq3}
\begin{aligned}
 &\mathbb{E}_{k + 1}[ \lVert x_{k + 1} - x^*\rVert_2^2 + \tfrac{\gamma_{k + 1}^2}{\lambda}\lVert v_{k + 1} - v^*\rVert_2^2 ] \\
 & =  \mathbb{E}_{k + 1}[ \lVert x_{k + 1} - \hat{x}_{k + 1} \rVert_2^2 + \tfrac{\gamma_{k + 1}^2}{\lambda}\lVert v_{k + 1} - \hat{v}_{k + 1} \rVert_2^2 ]  
 + \mathbb{E}_{k + 1}[ \lVert \hat{x}_{k + 1} - \overline{x}_{k + 1}\rVert_2^2 + \tfrac{\gamma_{k + 1}^2}{\lambda}\lVert \hat{v}_{k + 1} - \overline{v}_{k + 1} \rVert_2^2 ]  \\
& \qquad + \mathbb{E}_{k + 1}[ \lVert \overline{x}_{k + 1} - x^*\rVert_2^2 + \tfrac{\gamma_{k + 1}^2}{\lambda}\lVert \overline{v}_{k + 1} - v^* \rVert_2^2], 
 \end{aligned}
\end{equation}
Apply Lemma {\ref{lm5}} and {(\ref{lm6eq2})} to {(\ref{lm6eq3})}, we finally arrive at 
\begin{equation}\label{lm6eq4}
\begin{aligned}
 &\mathbb{E}_{k + 1}[ \lVert x_{k + 1} - x^*\rVert_2^2 + \tfrac{\gamma_{k + 1}^2}{\lambda}\lVert v_{k + 1} - v^*\rVert_2^2 ] \\
 & =  \mathbb{E}_{k + 1}[ \lVert x_{k + 1} - \hat{x}_{k + 1} \rVert_2^2 + \tfrac{\gamma_{k + 1}^2}{\lambda}\lVert v_{k + 1} - \hat{v}_{k + 1} \rVert_2^2 ] 
 + \mathbb{E}_{k + 1}[ \lVert \hat{x}_{k + 1} - \overline{x}_{k + 1}\rVert_2^2 + \tfrac{\gamma_{k + 1}^2}{\lambda}\lVert \hat{v}_{k + 1} - \overline{v}_{k + 1} \rVert_2^2 ]  \\
& \qquad + \mathbb{E}_{k + 1}[ \lVert \overline{x}_{k + 1} - x^*\rVert_2^2 + \tfrac{\gamma_{k + 1}^2}{\lambda}\lVert \overline{v}_{k + 1} - v^* \rVert_2^2 ]  \\
& \leq (1 - 2\gamma_k \mu + (\tfrac{3}{\beta^2} + C_1 + C_2)\gamma_k^2))\mathbb{E}_{k}[ \lVert x_k - x^* \rVert_2^2 ] + \tfrac{\gamma_{k}^2}{\lambda}[1 -  \lambda\rho_{min}(BB^T)] \mathbb{E}_{k}[ \lVert v_k - v^* \rVert_2^2 ] \\
& \quad  + \gamma_k^2(\delta^2 + \sigma^2 + C_3 + C_4)\\
& \overset{\tiny{\circled{1}}}{=}  (1 - \gamma_k(2\mu - (\tfrac{3}{\beta^2} + C_1 + C_2)\gamma_k))\mathbb{E}_{k}[ \lVert x_k - x^* \rVert_2^2 ] + \tfrac{\gamma_{k}^2}{\lambda}(1 -  \lambda\rho_{min}(BB^T)) \mathbb{E}_{k}[\lVert v_k - v^* \rVert_2^2 ] + \gamma_k^2D_2\\
& \overset{\tiny{\circled{2}}}{\leq}  (1 - D_1\gamma_k)\mathbb{E}_{k}[ \lVert x_k - x^* \rVert_2^2 ] + \tfrac{\gamma_{k}^2}{\lambda}(1 -  \lambda \rho_{min}(BB^T)) \mathbb{E}_{k}[\lVert v_k - v^* \rVert_2^2 ] + \gamma_k^2D_2,
 \end{aligned}
\end{equation}
where 
\begin{itemize}
   \item In $\circled{1}$, we set $D_2 = \delta^2 + \sigma^2 + C_3 + C_4 = \sigma^2 + \delta^2 
+ \Big(\frac{4(1 + q)(N - n)}{n(N - 1)} + 1 + q\Big)C_0$.
   \item In $\circled{2}$, we use the fact that $\gamma_k$ is decreasing and $0 < D_1 < 2\mu$, there must be some $K_0$ larger enough such that $\gamma_k(2\mu - (\tfrac{3}{\beta^2} + C_1 + C_2)\gamma_k) > D_1\gamma_k$.
 \end{itemize} 
 This completes the proof.
\end{proof}


\bibliographystyle{siam}
\bibliography{ref}
\end{document}